\newtheorem{theorem}{Theorem}[section]
\newtheorem{lemma}[theorem]{Lemma}
\newtheorem{corollary}[theorem]{Corollary}
\newtheorem{proposition}[theorem]{Proposition}
\newtheorem{remark}[theorem]{Remark}
\def\cP{{\mathcal P}} 
 \def\cD{{\mathcal D}}     \def\rE{{\textrm E}} \def\rG{{\textrm G}}     \def\cN{{\mathcal N}}
 \def\cO{{\mathcal O}} 
    \def\cQ{{\mathcal Q}}  
\def\cK{{\mathcal K}} \def\cF{{\mathcal F}}
\def\bbN{{\mathbb N}}  \def\bbZ{{\mathbb Z}}  \def\bbQ{{\mathbb Q}}
    \def\bbF{{\mathbb F}}
\def\bbC{{\mathbb C}}
\def\Hom{\mbox{\rm Hom}}  
\def\Ext{\mbox{\rm Ext}\,}   
\def\dim{\mbox{\rm dim}\,}   
    \def\Ind{\mbox{\rm Ind}}  \def\Res{\mbox{\rm Res}}\def\ind{\mbox{\rm ind}}\def\res{\mbox{\rm res}}
\def\lp{{}^p}
\def\bnu{{|\nu\rangle}}
\def\bnu{{\boldsymbol{\nu}}}
\begin{document}

\title[The parity of Lusztig's restriction functor and Green's formula]
{The parity of Lusztig's restriction functor and Green's formula}
\thanks{Jiepeng Fang and Yixin Lan were supported by Tsinghua University Initiative Scientific Research Program (No. 2019Z07L01006), and Jie Xiao was supported by NSF of China (No. 12031007).}

\author[Jiepeng Fang, Yixin Lan, Jie Xiao]{Jiepeng Fang, Yixin Lan, Jie Xiao}
\address{School of Mathematical Sciences, Peking University, Beijing 100871, P. R. China}
\email{fangjp@math.pku.edu.cn (J. Fang)}
\address{Department of Mathematical Sciences, Tsinghua University, Beijing 100084, P. R. China}
\email{lanyx18@mails.tsinghua.edu.cn (Y. Lan)}
\address{Department of Mathematical Sciences, Tsinghua University, Beijing 100084, P. R. China}
\email{jxiao@tsinghua.edu.cn (J. Xiao)}

\date{\today}

\keywords{quiver, perverse sheaf, restriction functor, Green's formula}

\bibliographystyle{abbrv}

\maketitle

\begin{abstract}
Our investigation in the present paper is based on three important results. (1) In \cite{Ringel-1990}, Ringel introduced Hall algebra for representations of a quiver over finite fields and proved the elements corresponding to simple representations satisfy the quantum Serre relation. This gives a realization of the nilpotent part of quantum group if the quiver is of finite type. (2) In \cite{Green-1995}, Green found a homological formula for the representation category of the quiver and equipped Ringel's Hall algebra with a comultiplication. The generic form of the composition subalgebra of Hall algebra generated by simple representations realizes the nilpotent part of quantum group of any type. (3) In \cite{Lusztig-1991}, Lusztig defined induction and restriction functors for the perverse sheaves on the variety of representations of the quiver which occur in the direct images of constant sheaves on flag varieties, and he found a formula between his induction and restriction functors which gives the comultiplication as algebra homomorphism for quantum group. In the present paper, we prove the formula holds for all semisimple complexes with Weil structure. This establishes the categorification of Green's formula.
\end{abstract}

\setcounter{tocdepth}{1}\tableofcontents

\section{Introduction}

Let $U_v$ be a quantum group defined by Cartan datum induced by a quiver $Q=(I,H,s,t)$ as in \cite{Lusztig-1993}, and $U_v^-$ be its negative part. As a bialgebra, $U_v^-$ has a multiplication $m:U_v^-\otimes U_v^-\rightarrow U_v^-$ and a comultiplication $\Delta:U_v^-\rightarrow U_v^-\otimes U_v^-$ such that $\Delta$ is an algebra homomorphism. Realization of quantum group is an important problem in representation theory.

In \cite{Ringel-1990} and \cite{Ringel-1993}, Ringel realized $U_v^-$ by (twisted) Hall algebras. Let $\bbF_q$ be the finite field of order $q$ and $\Lambda$ be the set of isomorphism classes of finite-dimensional representations of $Q$ over $\bbF_q$. For any $\alpha\in \Lambda$, let $u_\alpha$ be a symbol and $M_\alpha$ be a fixed representation whose isomorphism class is $\alpha$. The twisted Hall algebra $H_q(Q)$ is a $\bbQ(v_q)$-vector space with the basis $\{u_\alpha\mid \alpha\in \Lambda\}$, where $v_q\in \bbC$ is a fixed square root of $q$. It has a multiplication 
$$u_\alpha*u_\beta=\sum_{\gamma\in \Lambda}v_q^{\langle\alpha,\beta\rangle}g^\gamma_{\alpha\beta}u_\gamma,$$
where $\langle-,-\rangle$ is the Euler form, and the filtration number $g^\gamma_{\alpha\beta}$ is the number of submodules $B$ of $M_\gamma$ such that $M_\gamma/B\simeq M_\alpha, B\simeq M_\beta$. Ringel proved that the elements corresponding to simple representations satisfy the quantum Serre relation and $H_q(Q)$ is isomorphic to $U_{v=v_q}^-$ as algebras if $Q$ is of finite type. It is natural to consider how to define a comultiplication for the Hall algebra $H_q(Q)$.

In \cite{Lusztig-1990} and \cite{Lusztig-1991}, Lusztig categorified $U_v^-$ by perverse sheaves. Let $k=\overline{\bbF}_q$ be the algebraic closure of $\bbF_q$. For any $\nu\in \bbN I$, fix a $I$-graded $k$-vector space $V_\nu$ with a $\bbF_q$-structure of dimension vector $\nu$, then the $k$-variety $\rE_{V_\nu}=\bigoplus_{h\in H}\Hom_{k}((V_\nu)_{h'},(V_{\nu})_{h''})$ together an action by the algebraic group $\rG_{V_\nu}=\prod_{i\in I}GL_{k}((V_\nu)_i)$ parametrizes the isomorphism classes of representations of $Q$ of dimension vector $\nu$. Lusztig defined $\cQ_{V_{\nu}}$ as the subcategory of $\cD^b_{\rG_{V_\nu}}(\rE_{V_\nu})$ consisting of direct sums of perverse sheaves (up to shifts and Tate twists) which occur as direct summands in the direct images of constant sheaves on flag varieties, see subsection \ref{categorification} for details. For any $\nu=\nu'+\nu''\in \bbN I$, Lusztig defined two functors
\begin{align*}
\Ind^\nu_{\nu',\nu''}&:\cD^b_{\rG_{V_{\nu'}}}(\rE_{V_{\nu'}})\times  \cD^b_{\rG_{V_{\nu''}}}(\rE_{V_{\nu''}})\rightarrow \cD^b_{\rG_{V_{\nu}}}(\rE_{V_{\nu}}),\\
\Res^\nu_{\nu',\nu''}&:\cD^b_{\rG_{V_{\nu}}}(\rE_{V_{\nu}})\rightarrow\cD^b_{\rG_{V_{\nu'}}\times \rG_{V_{\nu''}}}(\rE_{V_{\nu'}}\times\rE_{V_{\nu''}})
\end{align*}
satisfying $\Ind^\nu_{\nu',\nu''}(\cQ_{V_{\nu'}}\boxtimes \cQ_{V_{\nu''}})\subset \cQ_{V_{\nu}}, \Res^\nu_{\nu',\nu''}(\cQ_{V_{\nu}})\subset \cQ_{V_{\nu'}}\boxtimes \cQ_{V_{\nu''}}$, 
which are called induction and restriction functors, such that all induction and restriction functors induce a multiplication and a comultiplication on $\cK=\bigoplus_{\nu\in \bbN I}\cK_{\nu}$, where $\cK_\nu$ be the Grothendieck group of $\cQ_{V_{\nu}}$ which has a $\bbZ[v,v^{-1}]$-module structure via $v.[L]=[L[1](\frac{1}{2})]$. Moreover, Lusztig proved a formula about induction and restriction functors
$$\Res^\gamma_{\alpha',\beta'} \Ind^\gamma_{\alpha,\beta}(A\boxtimes B)\simeq \bigoplus_{\lambda\in \cN}\Ind (A_\lambda\boxtimes B_\lambda)[-2g(\lambda)](-g(\lambda))$$
for $A\in \cQ_{V_\alpha}, B\in \cQ_{V_\beta}$, see Proposition 8.4 in \cite{Lusztig-1991} or Lemma 13.1.5 in \cite{Lusztig-1993} for details, such that $\cK$ is a bialgebra, that is, the comultiplication $\cK\rightarrow \cK\otimes_{\bbZ[v,v^{-1}]}\cK$ is an algebra homomorphism with respect to a twisted multiplication on $\cK\otimes_{\bbZ[v,v^{-1}]}\cK$, and proved that the $\bbQ(v)$-algebra $\bbQ(v)\otimes_{\bbZ[v,v^{-1}]}\cK$ is isomorphic to $U_v^-$, see Theorem 13.2.11 in \cite{Lusztig-1993}.

In \cite{Green-1995}, Green equipped the Hall algebra $H_q(Q)$ with a comultiplication
$$\Delta(u_\gamma)=\sum_{\alpha,\beta\in \Lambda}v_q^{\langle\alpha,\beta\rangle}a_{\alpha}a_{\beta}a_\gamma^{-1}g^\gamma_{\alpha\beta}u_\alpha\otimes u_\beta,$$
where $a_\gamma$ is the order of the automorphism group of $M_\gamma$ for any $\gamma\in \Lambda$. He proved a homological formula about filtration numbers
\begin{align*}
&a_\alpha a_\beta a_{\alpha'}a_{\beta'}\sum_{\gamma\in \Lambda}a_\gamma^{-1}g^\gamma_{\alpha\beta}g^\gamma_{\alpha'\beta'}\\=&\sum_{\alpha_1,\alpha_2,\beta_1,\beta_2\in \Lambda}\frac{|\Ext_{kQ}^1(M_{\alpha_1},M_{\beta_2})|}{|\Hom_{kQ}(M_{\alpha_1},M_{\beta_2})|}g^\alpha_{\alpha_1\alpha_2}g^\beta_{\beta_1,\beta_2}g^{\alpha'}_{\alpha_1\beta_1}g^{\beta'}_{\alpha_2\beta_2}a_{\alpha_1}a_{\alpha_2}a_{\beta_1}a_{\beta_2},
\end{align*}
which is called Green's formula. This formula is equivalent to $\Delta$ being an algebra homomorphism for $H_q(Q)$ with respect to a twisted multiplication on $H_q(Q)\otimes H_q(Q)$. Green also generalized Ringel's result to the case $Q$ is of any type, and the generic form of the composition subalgebra of $H_q(Q)$ generated by simple representations is isomorphic to $U_v^-$ as bialgebras.

The link between Hall algebras realization and perverse sheaves realization is given by sheaf-function correspondence, see \cite{Lusztig-1998}, \cite{Xiao-Xu-Zhao-2019} and \cite{Kiehl-Rainer-2001}. On the one hand, the Hall algebra $H_q(Q)$ can be rewritten via functions. Let $F(x)=x^q$ be the Frobenius automorphism of $k$ and $\rE_{V_\nu}^F, \rG_{V_\nu}^F$ be the $F$-fixed subvarieties of $\rE_{V_\nu}, \rG_{V_\nu}$  for any $\nu\in \bbN I$. Let $\tilde{H}_\nu$ be the space of ${\rG_{V_\nu}^F}$-invariant functions on $\rE_{V_\nu}^F$. There are morphisms 
$$\ind^\nu_{\nu',\nu''}:\tilde{H}_{\nu'}\otimes \tilde{H}_{\nu''}\rightarrow \tilde{H}_\nu,\ \res^\nu_{\nu',\nu''}:\tilde{H}_\nu\rightarrow \tilde{H}_{\nu'}\otimes \tilde{H}_{\nu''}$$
which are defined in an analogue way as induction and restriction functors. Then $\tilde{H}_q(Q)=\bigoplus_{\nu\in \bbN I}\tilde{H}_v$ is isomorphic to the Hall algebra $H_q(Q)$ and $\ind,\res$ coincide with $*,\Delta$ respectively, see \cite{Xiao-Xu-Zhao-2019}. On the other hand, Lusztig proved that each complex in $\cQ_{V_\nu}$ has a canonical Weil structure in \cite{Lusztig-1998}. By Grothendieck's trace formula, the trace map $\chi_q:\cK\rightarrow \tilde{H}_q(Q)$ is a bialgebra homomorphism, that is, $\ind^\nu_{\nu',\nu''}(\chi_q\otimes \chi_q)=\chi_q\Ind^\nu_{\nu',\nu''}, \res^\nu_{\nu',\nu''}\chi_q=(\chi_q\otimes\chi_q)\Res^\nu_{\nu',\nu''}$. 

The image $\chi_q(\cK)$ coincide with the composition subalgebra of $\tilde{H}_q(Q)\simeq H_q(Q)$ (which are isomorphic to $U_{v=v_q}^-$). It is interesting to consider all semisimple complexes with Weil structure on $\rE_{V_{\nu}}$ beyond $\cQ_{V_\nu}$, because they have potential to categorify the Hall algebra $H_q(Q)$, see \cite{Xiao-Xu-Zhao-2019}. Under this sheaf-function correspondence, it is natural to consider which formula about semisimple complexes will correspond to the Green's formula. It is not difficult to guess the following formula, see Proposition 8.4 in \cite[Proposition 8.4]{Lusztig-1991}, Remark in Section 4.2 of \cite{Schiffmann-2006} and Theorem 7 in \cite{Xiao-Xu-Zhao-2019}. The notations will be explained in section 3.
\begin{theorem}
For any $A\in \cD^{b,ss}_{\rG_{V_\alpha},m}(\rE_{V_\alpha}), B\in \cD^{b,ss}_{\rG_{V_\beta},m}(\rE_{V_\beta})$, we have
\begin{align*}
&\Res^{\gamma}_{\alpha',\beta'}\Ind^{\gamma}_{\alpha,\beta}(A\boxtimes B)\simeq \\&\bigoplus_{\lambda=(\alpha_1,\alpha_2,\beta_1,\beta_2)\in \cN}\!\!\!\!\!\!\!(\Ind^{\alpha'}_{\alpha_1,\beta_1}\!\boxtimes\! \Ind^{\beta'}_{\alpha_2,\beta_2})(\tau_\lambda)_!((\Res^{\alpha}_{\alpha_1,\alpha_2}A)\!\boxtimes\! (\Res^{\beta}_{\beta_1,\beta_2}B))[-(\alpha_2,\beta_1)](-\frac{(\alpha_2,\beta_1)}{2}).
\end{align*}
\end{theorem}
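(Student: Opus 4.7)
The plan is to follow the geometric strategy of Lusztig's proof of the induction-restriction formula on $\cQ_V$ (Proposition 8.4 of \cite{Lusztig-1991} and Lemma 13.1.5 of \cite{Lusztig-1993}), replacing the step in which Lusztig invokes the decomposition theorem for direct images of constant sheaves from flag varieties by a direct purity argument that works for any object of $\cD^{b,ss}_{\rG,m}$.

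\emph{Step 1 (fiber product diagram).} First, I would write down Lusztig's induction diagram
\[
\rE_{V_\alpha}\times \rE_{V_\beta}\xleftarrow{p_1} E'\xrightarrow{p_2} E''\xrightarrow{p_3}\rE_{V_\gamma},
\]
in which $p_1$ is a smooth bundle projection, $p_2$ is a principal bundle, and $p_3$ is proper, together with the restriction diagram $\rE_{V_{\alpha'}}\times \rE_{V_{\beta'}}\xleftarrow{\pi}\rE_{V_{\alpha',\beta'}}\xrightarrow{\iota}\rE_{V_\gamma}$, so that $\Ind^{\gamma}_{\alpha,\beta}=(p_3)_!(p_2)_\flat p_1^*$ and $\Res^{\gamma}_{\alpha',\beta'}=\pi_!\iota^*$. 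Pulling $p_3$ back along $\iota$ produces a cartesian square with total space $Z$, and by proper base change $\Res^{\gamma}_{\alpha',\beta'}\Ind^{\gamma}_{\alpha,\beta}(A\boxtimes B)$ is computed by a single pushforward-pullback through $Z$.

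\emph{Step 2 (stratification).} A point of $Z$ encodes an induction subspace $N\subset M$ together with a restriction subspace inside the same representation $M$; the relative position of these two flags is indexed by quadruples $\lambda=(\alpha_1,\alpha_2,\beta_1,\beta_2)\in \cN$ satisfying $\alpha_1+\alpha_2=\alpha$, $\beta_1+\beta_2=\beta$, $\alpha_1+\beta_1=\alpha'$ and $\alpha_2+\beta_2=\beta'$, giving a locally closed decomposition $Z=\bigsqcup_\lambda Z_\lambda$ into smooth $\rG_{V_{\alpha'}}\times \rG_{V_{\beta'}}$-stable pieces. On each stratum, an iterated base change along the natural projection $Z_\lambda\to \rE_{V_{\alpha_1}}\times \rE_{V_{\alpha_2}}\times \rE_{V_{\beta_1}}\times \rE_{V_{\beta_2}}$ and the inclusion $\tau_\lambda$ of representations preserving the $(\alpha_1,\beta_1)$-flag identifies the restriction of the complex to $Z_\lambda$ with the pullback of
\[
(\Ind^{\alpha'}_{\alpha_1,\beta_1}\boxtimes \Ind^{\beta'}_{\alpha_2,\beta_2})(\tau_\lambda)_!((\Res^{\alpha}_{\alpha_1,\alpha_2}A)\boxtimes (\Res^{\beta}_{\beta_1,\beta_2}B)),
\]
shifted by $[-(\alpha_2,\beta_1)]$ and Tate-twisted by $(-\frac{(\alpha_2,\beta_1)}{2})$, where the shift records the dimension of an affine fibre internal to $Z_\lambda$.

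\emph{Step 3 (assembling the strata).} The stratification yields a filtration on the total pushforward whose graded pieces are exactly the claimed summands, and the remaining task is to upgrade it to a direct sum. Here the hypothesis on Weil structure enters: both $\Ind$ and $\Res$ preserve $\cD^{b,ss}_{\rG,m}$ under the weight-preserving normalization $[1](\frac{1}{2})$, so each stratum contribution is a semisimple pure complex of the same weight. By Beilinson-Bernstein-Deligne, $\Ext^1$ between pure complexes of the same weight in the mixed derived category vanishes, so every connecting morphism in the distinguished triangles induced by the stratification is zero and the filtration splits canonically.

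\emph{Main obstacle.} The main obstacle is Step 3: in Lusztig's treatment of $\cQ_V$ the decomposition theorem applied to a proper map from a flag variety supplies the splitting immediately, but here one must replace that by a uniform purity argument. The delicate verification is that the shift-twist pair $[-(\alpha_2,\beta_1)](-\frac{(\alpha_2,\beta_1)}{2})$ is exactly the one that restores weight zero for every $\lambda$ simultaneously, so purity applies uniformly across strata and forces the extensions between graded pieces to vanish; this requires careful bookkeeping of weights through smooth pullback, proper pushforward, closed pullback and principal-bundle descent.
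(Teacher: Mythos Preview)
Your overall architecture—form the fiber product, stratify by relative position $\lambda\in\cN$, show each graded piece is pure of a common weight so the filtration splits—is exactly the paper's. The gap is in how you obtain that purity. You say it will come from ``careful bookkeeping of weights through smooth pullback, proper pushforward, closed pullback and principal-bundle descent,'' but of those operations only smooth pullback, proper pushforward, and $(\ )_\flat$ preserve purity; closed pullback $\iota^*$ preserves only $\cD^b_{\leqslant w}$, and the vector-bundle projection $\kappa$ in $\Res=\kappa_!\iota^*$ is not proper, so $\kappa_!$ also preserves only $\cD^b_{\leqslant w}$. Bookkeeping therefore yields at best an upper bound on the weight of each stratum contribution and no lower bound, so your $\Ext^1$-vanishing in Step~3 has nothing to feed on.

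The missing ingredient is Braden's hyperbolic localization theorem. The paper shows that the composite $\kappa_!\iota^*$—both in the basic restriction functor (Proposition~\ref{res-hyperbolic}) and, crucially, in the functor that carries the semisimple complex on $E''^{\gamma}_{\alpha,\beta}$ to its stratum-wise pieces on $\bigsqcup_\lambda E''^{\alpha'}_{\alpha_1,\beta_1}\times E''^{\beta'}_{\alpha_2,\beta_2}$ (Lemma~\ref{hyperbolic-2})—is precisely the hyperbolic localization $(\pi^+)_!(g^+)^*$ for the $k^*$-action coming from the one-parameter subgroup $t\mapsto \textrm{Id}_{V_{\alpha'}}\oplus t\,\textrm{Id}_{V_{\beta'}}$ of the parabolic $Q^{\beta'}$. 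Braden's theorem then guarantees this preserves purity of weakly equivariant mixed complexes, supplying the missing lower weight bound and making the splitting go through. This is the substantive new step beyond Lusztig's argument for $\cQ_V$, and your proposal does not mention it; even if you instead try to identify each graded piece with the right-hand term first and deduce purity from that of $\Ind$ and $\Res$, you still need hyperbolic localization for $\Res$ itself, and your Step~2 sketch is far too light to stand in for the full geometric comparison (the paper's Lemmas~\ref{f-p2}--\ref{varphi}) that such a reordering would require.
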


For $A\in\cQ_{V_\alpha}, B\in \cQ_{V_\beta}$, it is equivalent to the formula given by Lusztig, see \cite[Proposition 8.4]{Lusztig-1991} or \cite[Lemma 13.1.5]{Lusztig-1993}.

Beyond $\cQ_{V_\alpha}, \cQ_{V_\beta}$, Xiao-Xu-Zhao verified the formula by trace maps and Green's formula, see Theorem 7 in \cite{Xiao-Xu-Zhao-2019}. More precisely, their strategy was based on the fact that two semisimple complexes are isomorphic if and only if they have the same images under trace maps $\chi_{q^n}$ over each finite field $\bbF_{q^n}$, see \uppercase\expandafter{\romannumeral3}.Theorem 12.1 in \cite{Kiehl-Rainer-2001}. By applying the trace map $\chi_{q^n}$, they obtained a formula about functions which is equivalent to the Green's formula over $\bbF_{q^n}$.

Consider these two facts: the comultiplication $\cK\rightarrow \cK\otimes_{\bbZ[v,v^{-1}]}\cK$ induced by Lusztig's restriction functors is an algebra homomorphism, and the comultiplication $H_q(Q)\rightarrow H_q(Q)\otimes H_q(Q)$ given by Green is an algebra homomorphism (both with respect to a twisted multiplication on the tensor products), where the former follows from the formula in Theorem 1.1 for $\cQ_{V_\alpha},\cQ_{V_{\beta}}$ and the latter follows from Green's formula. In the present paper, we give a sheaf-level proof of the formula in Theorem 1.1 beyond $\cK$ without using of trace maps, which is logically independent of Green's formula. This formula completes the categorification of Green's formula.

In section 2, we review some properties of mixed equivariant semisimple complexes, the definitions of induction, restriction functors and a result of hyperbolic localization functors. In section 3, we prove the formula by three subsections. In the first subsection, we deal with the left hand side of the formula with some methods inspired by Section 4.3-4.7 in \cite{Lusztig-1991}. In the second subsection, we deal with the right hand side of the formula. In the third subsection, we combine two sides with some methods inspired by \cite{Green-1995} and \cite{Ringel-1996}.

\section{Operations on perverse sheaves from quivers, a brief review}

Let $q,l$ be two fixed distinct prime numbers. We denote by $k=\overline{\bbF}_q$ the algebraic closure of the finite field of order $q$ and $\overline{\bbQ}_l$ the algebraic closure of the field of $l$-adic numbers. We fix an isomorphism $\overline{\bbQ}_l\simeq \bbC$.

\subsection{Mixed equivariant semisimple complex}\

In this subsection, we review some properties of mixed equivariant semisimple complexes. We refer \cite{Bernstein-Lunts-1994,Beilinson-Bernstein-Deligne-1982,Kiehl-Rainer-2001,Lusztig-1993} for details.

Let $X$ be a $k$-variety admitting an $\bbF_q$-structure, we denote by $\cD^b(X)$ the bounded derived category of constructible $\overline{\bbQ}_l$-sheaves on $X$, $\cD^b_m(X)$ the subcategory consisting of mixed complexes and $\cD^{b,ss}_m(X)$ the subcategory consisting of mixed semisimple complexes.

For any $n\in \bbZ$, we denote by $[n]$ the shift functor, $(\frac{n}{2})$ the Tate twist if $n$ is even or the square root of the Tate twist if $n$ is odd, and $\lp H^n$ the perverse cohomology functor.

Let $\cD^b_{\leqslant \omega}(X)$ and $\cD^b_{\geqslant \omega}(X)$ be the full subcategories of $\cD^b_m(X)$ consisting of complexes whose $i$-th cohomology has weight smaller than $\omega+i$ and larger than $\omega+i$ for any $i$ respectively. Any objects in $\cD^b_{\leqslant \omega}\cap \cD^b_{\geqslant \omega}$ are said to be pure of weight $\omega$. Note that simple perverse sheaves are pure, since each perverse sheaf has a canonical filtration with pure subquotients, see Theorem 5.3.5 in \cite{Beilinson-Bernstein-Deligne-1982} or Theorem 5.4.12 in \cite{Pramod-2021}.

If $f:X\rightarrow Y$ is a morphism between $k$-varieties, the derived functors of $f^*,f_*,f_!$ are still denoted by $f^*:\cD^b(Y)\rightarrow \cD^b(X), f_*,f_!:\cD^b(X)\rightarrow \cD^b(Y)$ respectively. The functor $f_!$ has a right adjoint $f^!:\cD^b(Y)\rightarrow \cD^b(X)$.

\begin{proposition}[\cite{Beilinson-Bernstein-Deligne-1982}, Section 4.2.4, 5.1.14]\label{pure-*!}
These functors $f^*,f_*,f^!,f_!,[n],(\frac{n}{2})$ send mixed complexes to mixed complexes. Moreover, \\
(a) if $K$ is pure of weight $\omega$, then $K[n]$ is pure of weight $\omega+n$ and $K(\frac{n}{2})$ is pure of weight $\omega-n$;\\
(b) $f^*,f_!$ preserve $\cD^b_{\leqslant \omega}$;\\
(c) $f_*,f^!$ preserve $\cD^b_{\geqslant \omega}$;\\
(d) if $f$ is smooth with connected fibres of dimension $d$, then $f^!=f^*[2d](d)$, and so $f^*$ sends pure complexes of weight $\omega$ to pure complexes of weight $\omega$.\\
(e) if $f$ is proper, then $f_!=f_*$, and so $f_!$ sends pure complexes of weight $\omega$ to pure complexes of weight $\omega$.
\end{proposition}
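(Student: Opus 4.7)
The plan is to derive the statement by invoking Deligne's weight theory from \emph{Weil II}, as packaged in Sections 4.2 and 5.1 of \cite{Beilinson-Bernstein-Deligne-1982}. The deep input is Deligne's theorem that if $K\in\cD^b_{\leq\omega}(X)$ and $f\colon X\to Y$ is any morphism of finite-type $\bbF_q$-schemes, then $f_!K\in\cD^b_{\leq\omega}(Y)$; everything else follows by formal manipulations with Verdier duality and a small amount of linear bookkeeping of weights under $[n]$ and $(\tfrac{n}{2})$.

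First I would dispatch (a) straight from definitions: the $i$-th cohomology sheaf of $K[n]$ is $\cH^{i+n}(K)$, whose Frobenius weights are $\leq\omega+(i+n)=(\omega+n)+i$ (and likewise from below) exactly when $K$ is pure of weight $\omega$; the Tate twist $(\tfrac{n}{2})$ divides Frobenius eigenvalues by $q^{n/2}$ and hence shifts every weight by $-n$. Preservation of $\cD^b_m$ under $f^*,f_*,f_!,f^!$ is classical and reduces, by stratification and devissage, to the case of a lisse mixed sheaf on a smooth stratum, where it is immediate from the definition of mixedness.

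Next, (b) for $f_!$ is Deligne's theorem itself; for $f^*$ I would stratify $X$ by smooth locally closed pieces on which $K$ is lisse, noting that pullback of a lisse mixed sheaf preserves the weight bound sheaf-by-sheaf. Part (c) then follows formally from (b) by Verdier duality: writing $D$ for the Verdier dual, if $K\in\cD^b_{\geq\omega}$ then $DK\in\cD^b_{\leq-\omega}$, and the identity $D(f_*K)=f_!DK\in\cD^b_{\leq-\omega}$ forces $f_*K\in\cD^b_{\geq\omega}$; the identical argument with $f^!=Df^*D$ handles $f^!$. For (d), smoothness of $f$ with connected fibres of dimension $d$ gives the relative purity isomorphism $f^!\simeq f^*[2d](d)$; combining (b) (which yields weights $\leq\omega$ for $f^*K$) with (c) applied to $f^!K=f^*K[2d](d)$ (which forces $f^*K$ to have weights $\geq\omega$ once the shift by $2d$ and twist by $d$ are unwound via (a)) pins $f^*K$ down to pure of weight $\omega$. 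Part (e) is immediate: properness gives $f_!=f_*$, so combining (b) and (c) collapses directly to purity of weight $\omega$.

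The only thing to watch is that the hypotheses of the BBD theorems really apply to the complexes we handle, i.e.\ that they are genuinely mixed in the BBD sense and that the maps descend to morphisms of finite-type $\bbF_q$-schemes. In our setting this is automatic, since $\rE_{V_\nu}$ and $\rG_{V_\nu}$ come with $\bbF_q$-structures and the constant sheaf is pure of weight $0$, so every complex subsequently produced by the six operations in the proposition remains mixed. There is no genuine obstacle here; the content is inherited verbatim from the BBD framework and the proposition really just records what we will repeatedly use in the rest of the paper.
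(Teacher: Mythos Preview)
The paper does not give its own proof of this proposition: it is stated as a citation from \cite{Beilinson-Bernstein-Deligne-1982}, Sections~4.2.4 and~5.1.14, and no argument is supplied in the text. Your sketch is a correct outline of the standard BBD/Weil~II argument (Deligne's bound for $f_!$, Verdier duality for the $\geqslant\omega$ statements, relative purity $f^!\simeq f^*[2d](d)$ for smooth $f$, and $f_!=f_*$ for proper $f$), so there is nothing to compare against and no gap to flag.
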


\begin{theorem}[\cite{Beilinson-Bernstein-Deligne-1982}, Theorem 5.3.8]\label{BBD}
If $K\in \cD^b_m(X)$ is pure, then it is semisimple.
\end{theorem}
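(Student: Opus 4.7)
My plan is to follow the classical proof of this theorem in \cite{Beilinson-Bernstein-Deligne-1982}, which organizes the statement into a reduction to an $\mathrm{Ext}^{1}$-vanishing for pure perverse sheaves and then establishes that vanishing via Frobenius weights.

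For the reduction, I would first upgrade Proposition~\ref{pure-*!} by observing that the perverse truncation $\lp\tau_{\leq i}$ and cohomology $\lp H^{i}$ functors are weight-exact, so that each $\lp H^{i}(K)$ is a pure perverse sheaf of weight $\omega+i$ whenever $K$ is pure of weight $\omega$. The theorem then splits into two statements: (a) every pure perverse sheaf is isomorphic in $\cD^{b}_{m}(X)$ to a direct sum of simple pure perverse sheaves of the same weight; and (b) the Postnikov tower triangles $\lp\tau_{\leq i-1}K\to\lp\tau_{\leq i}K\to\lp H^{i}(K)[-i]\to$ split in $\cD^{b}_{m}(X)$, giving $K\simeq\bigoplus_{i}\lp H^{i}(K)[-i]$. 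Both (a) and (b) reduce, by a straightforward induction on the length of a composition series respectively on the range of nonzero perverse cohomology, to the single vanishing $\mathrm{Ext}^{1}_{\cD^{b}_{m}(X)}(P_{1},P_{2}[n])=0$ whenever $P_{1},P_{2}$ are pure perverse sheaves of weights $\omega_{1}\leq\omega_{2}$ and $n\geq 0$.

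The key vanishing is a Frobenius-weight argument. In the absolute derived category $\cD^{b}(X)$ the $\overline{\bbQ}_{l}$-vector space $\mathrm{Ext}^{1}(P_{1},P_{2}[n])$ carries a natural Frobenius action, computed as $H^{n+1}(X,R\mathrm{Hom}(P_{1},P_{2}))$. Using Verdier duality, which makes $DP_{1}$ pure of weight $-\omega_{1}$, together with the weight estimates of Proposition~\ref{pure-*!}(b),(c) applied to the structure morphism $a_{X}\colon X\to\mathrm{pt}$, I would show that this $\mathrm{Ext}^{1}$ is pure of weight at least $\omega_{2}-\omega_{1}+n+1\geq 1$. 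The crucial point is then that extensions in the mixed category $\cD^{b}_{m}(X)$ correspond exactly to Frobenius-invariant classes, and a pure Frobenius module of weight $\geq 1$ has no Frobenius invariants, its eigenvalues having absolute value $q^{w/2}\neq 1$ for $w\geq 1$. Consequently every such extension splits in $\cD^{b}_{m}(X)$, simultaneously delivering (a) and, via the same estimate applied along the truncation tower, (b).

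The main obstacle is the precise two-sided weight estimate for $\mathrm{Ext}^{1}(P_{1},P_{2}[n])$ in the absolute category. Proposition~\ref{pure-*!} supplies only one-sided bounds for $a_{X!}$ and $a_{X*}$ individually, and merging them into an actual purity statement requires combining Verdier duality (which interchanges $\cD^{b}_{\leq\omega}$ and $\cD^{b}_{\geq-\omega}$) with a careful analysis of the hypercohomology spectral sequence converging to $\mathrm{Ext}^{1}$. This is the classical weight bookkeeping powering the whole Beilinson-Bernstein-Deligne theory, and the delicate step is to harmonize the global weight estimate with the identification of mixed extensions as Frobenius invariants rather than simply as elements of $\mathrm{Ext}^{1}_{\cD^{b}(X)}$.
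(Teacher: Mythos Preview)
The paper does not give a proof of this theorem at all: it is stated with a citation to \cite{Beilinson-Bernstein-Deligne-1982}, Theorem~5.3.8, and used as a black box thereafter. Your proposal is a reasonable outline of the original Beilinson--Bernstein--Deligne argument that the citation points to, so in that sense you are aligned with what the paper relies on.

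One imprecision worth flagging: you write that ``extensions in the mixed category $\cD^{b}_{m}(X)$ correspond exactly to Frobenius-invariant classes''. In fact (see \cite[5.1.2.5]{Beilinson-Bernstein-Deligne-1982}) there is a short exact sequence
\[
0 \longrightarrow \mathrm{Ext}^{i-1}_{\cD^{b}(X)}(P_{1},P_{2})_{F} \longrightarrow \mathrm{Ext}^{i}_{\cD^{b}_{m}(X)}(P_{1},P_{2}) \longrightarrow \mathrm{Ext}^{i}_{\cD^{b}(X)}(P_{1},P_{2})^{F} \longrightarrow 0,
\]
so a mixed $\mathrm{Ext}^{1}$ is an extension of Frobenius invariants by Frobenius coinvariants of $\mathrm{Ext}^{0}$, not just the invariants. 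Your weight estimate still forces both outer terms to vanish in the relevant range, but the sentence as written is not correct. Similarly, ``pure of weight at least $\omega_{2}-\omega_{1}+n+1$'' should read ``mixed of weights $\geq \omega_{2}-\omega_{1}+n+1$''; purity is a two-sided condition, and only the lower bound is available (and needed) here.
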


\begin{corollary}\label{semisimple-*!} Let $f:X\rightarrow Y$ be a morphism between $k$-varieties, then \\
(a) if $f$ is smooth with connected fibres, then $f^*$ sends semisimple complexes on $Y$ to semisimple complexes on $X$;\\
(b) if $f:X\rightarrow Y$ is proper, then $f_!$ sends semisimple complexes on $X$ to semisimple complexes on $Y$.
\end{corollary}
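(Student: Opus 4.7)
The plan is to reduce both assertions to the purity results recorded in Proposition \ref{pure-*!} and then invoke Theorem \ref{BBD}. By definition, every object of $\cD^{b,ss}_m(X)$ is a finite direct sum of simple perverse sheaves with integer shifts and (half) Tate twists; since $f^*$ and $f_!$ are triangulated functors that commute with finite direct sums, shifts, and Tate twists, it suffices to verify each conclusion with a single simple perverse sheaf $L$ in place of a general semisimple complex. Moreover, as noted immediately before Proposition \ref{pure-*!}, every simple perverse sheaf admits its canonical weight filtration with pure subquotients and is itself pure, so we may assume $L$ is pure of some weight $\omega$.

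For part (a), suppose $f$ is smooth with connected fibres. Proposition \ref{pure-*!}(d) directly states that under this hypothesis $f^*$ preserves pure complexes of weight $\omega$, so $f^*L$ is pure. Theorem \ref{BBD} then yields that $f^*L$ is semisimple, and reassembling the shifts, Tate twists, and direct summands that make up an arbitrary $K\in \cD^{b,ss}_m(Y)$ shows $f^*K$ is semisimple.

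For part (b), suppose $f$ is proper. Proposition \ref{pure-*!}(b) gives $f_!L\in \cD^b_{\leqslant \omega}$, while Proposition \ref{pure-*!}(c) gives $f^!L$, and thus by properness $f_!L=f_*L$, lies in $\cD^b_{\geqslant \omega}$. Hence $f_!L$ is pure of weight $\omega$, and Theorem \ref{BBD} again produces semisimplicity. Summing over simple perverse summands yields the statement for an arbitrary semisimple $K$.

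The result is a formal consequence of the two deep inputs Proposition \ref{pure-*!} and Theorem \ref{BBD}, so no serious obstacle is expected. The only point requiring mild care is the weight bookkeeping when the simple perverse summands of $K$ appear with differing shifts and half-twists, but this is handled summand-by-summand via Proposition \ref{pure-*!}(a) together with the additivity of $f^*$ and $f_!$.
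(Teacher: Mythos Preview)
Your argument is correct and is exactly the derivation the paper intends: the statement is labeled a Corollary of Proposition~\ref{pure-*!} and Theorem~\ref{BBD} and is given no separate proof there, so your reduction to simple (hence pure) perverse sheaves followed by Theorem~\ref{BBD} is the standard route. Two small remarks: in part~(b) you could invoke Proposition~\ref{pure-*!}(e) directly rather than reassembling it from (b) and (c), and in your sentence ``Proposition~\ref{pure-*!}(c) gives $f^!L$\ldots'' you presumably mean $f_*L$, since $f^!$ goes in the wrong direction here.
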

\begin{remark}
The statement (b) is known as Beilinson-Bernstein-Deligne decomposition theorem.
\end{remark}

Let $X$ be a $k$-variety admitting an $\bbF_q$-structure, and $G$ be a connected algebraic group admitting an $\bbF_q$-structure such that $G$ acts on $X$. We denote by $\cD_{G}^b(X)$ the $G$-equivariant bounded derived category of constructible $\overline{\bbQ}_l$-sheaves on $X$, $\cD_{G,m}^b(X)$ the subcategory consisting of mixed complexes and $\cD_{G,m}^{b,ss}(X)$ the subcategory consisting of mixed semisimple complexes.

\begin{proposition}[\cite{Lusztig-1993}, Section 8.1.7, 8.1.8]\label{Lusztig-principal}
Assume that $G$ acts on two $k$-varieties $X,Y$, and $f:X\rightarrow Y$ is a $G$-equivariant morphism, then\\
(a) $f^*,f_!$ send $G$-equivariant complexes to $G$-equivariant complexes;\\
(b) if $f$ is a locally trivial principal $G$-bundle, then there exists a functor $f_\flat$ which is a quasi-inverse of $f^*$ defining an equivalence of categories 
\begin{diagram}[midshaft]
\cD^{b,ss}_{G,m}(X) &\pile{\rTo^{f_\flat}\\ \lTo_{f^*}} &\cD^{b,ss}_m(Y),
\end{diagram}
where $f_\flat(K)=\bigoplus_{n\in \bbZ}\lp H^{n-\dim G}(f_*(\lp H^nK[-n]))[-n+\dim G]$ for $K\in \cD^{b,ss}_{G,m}(X)$.
\end{proposition}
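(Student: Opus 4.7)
The plan is to extend Lusztig's Mackey-type argument from \cite[Sections 4.3--4.7]{Lusztig-1991} to arbitrary mixed semisimple equivariant complexes by working entirely at the level of the defining correspondence diagrams and tracking weights carefully via Proposition \ref{pure-*!}. I would begin by rewriting both functors via their correspondences. The induction is defined by the smooth/principal-bundle/proper sequence
$$\rE_{V_\alpha}\times\rE_{V_\beta} \xleftarrow{p_1} E' \xrightarrow{p_2} E'' \xrightarrow{p_3} \rE_{V_\gamma},$$
and the restriction by the vector-bundle/closed-immersion pair
$$\rE_{V_{\alpha'}}\times\rE_{V_{\beta'}} \xleftarrow{\iota} F \xrightarrow{\kappa} \rE_{V_\gamma}.$$
Forming the Cartesian product $Z := F\times_{\rE_{V_\gamma}}E''$ and applying proper base change together with Proposition \ref{Lusztig-principal}(a) and (b), I would rewrite the left-hand side as a single pull-push through $Z$ applied to $A\boxtimes B$, with the weights of each intermediate complex controlled by Corollary \ref{semisimple-*!}.

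Next, I would stratify $Z$ by the set $\cN$. A point of $Z$ is a triple $(x,W,W_0)$ with $x\in\rE_{V_\gamma}$ preserving both $W$ (of dimension $\beta$) and $W_0$ (of dimension $\alpha'$), and the quadruple $\lambda=(\alpha_1,\alpha_2,\beta_1,\beta_2)\in\cN$ records the graded dimensions of $W_0\cap W$ and the related intersection data, subject to $\alpha_1+\alpha_2=\alpha$, $\beta_1+\beta_2=\beta$, $\alpha_1+\beta_1=\alpha'$, $\alpha_2+\beta_2=\beta'$. Let $Z_\lambda\subset Z$ be the corresponding locally closed stratum. Over $Z_\lambda$ the intersection data provides a canonical refinement of the double filtration on $V_\gamma$ into a block decomposition $V_{\alpha_1}\oplus V_{\beta_1}\oplus V_{\alpha_2}\oplus V_{\beta_2}$, so I would identify $Z_\lambda$ together with its pull-push maps with the middle space of the composite correspondence defining $(\Ind^{\alpha'}_{\alpha_1,\beta_1}\boxtimes \Ind^{\beta'}_{\alpha_2,\beta_2})(\tau_\lambda)_!((\Res^\alpha_{\alpha_1,\alpha_2}A)\boxtimes (\Res^\beta_{\beta_1,\beta_2}B))$; here $\tau_\lambda$ appears as the reordering of the block decomposition that swaps $V_{\beta_1}$ with $V_{\alpha_2}$. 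A dimension count of the relative smooth fibres, tracked through Proposition \ref{pure-*!}(d), produces the shift $[-(\alpha_2,\beta_1)]$ and Tate twist $(-(\alpha_2,\beta_1)/2)$ as the difference in relative dimension, which equals the off-diagonal Euler pairing between the $\beta_1$ and $\alpha_2$ blocks.

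Finally, I would pass from the stratum-wise isomorphisms to the predicted direct sum. Ordering $\cN$ by the closure order of $\overline{Z_\lambda}$, the open/closed distinguished triangle attached to each stratum and its complement yields a triangle whose terms are mixed semisimple by Corollary \ref{semisimple-*!} and Proposition \ref{Lusztig-principal}(b). By Theorem \ref{BBD}, each term decomposes into pure summands whose weights I would compute explicitly using Proposition \ref{pure-*!}(a)(d)(e); the resulting $\Ext^1$-vanishing between pure complexes of the appropriate shifts forces every such triangle to split, assembling the stratum contributions into the claimed direct sum. The main obstacle is the geometric identification of each $Z_\lambda$ with the middle space for the corresponding RHS summand together with the accompanying weight bookkeeping: unlike for complexes in $\cQ_{V_\alpha}$, where one can appeal to explicit simple constituents, here the identification and the splitting must be uniform in the input complex. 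I expect the hardest step to be verifying that the smooth, principal-bundle, and closed-immersion structures of $p_1, p_2, p_3, \kappa, \iota$ restrict compatibly to each $Z_\lambda$ so that the shift/twist formula is exact on the nose, which is where the combinatorial organisation inspired by \cite{Green-1995,Ringel-1996} in the third subsection orchestrates the stratum-by-stratum comparison.
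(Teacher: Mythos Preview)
Your proposal does not address the stated proposition at all. Proposition~\ref{Lusztig-principal} is a cited background result from \cite[8.1.7--8.1.8]{Lusztig-1993} asserting that $f^*,f_!$ preserve equivariance and that for a principal $G$-bundle $f$ the functor $f_\flat$ is quasi-inverse to $f^*$; the paper gives no proof of it. What you have written is a sketch of Theorem~\ref{main}, the main result of the paper. So as a proof of the given statement, your proposal is a complete non sequitur.

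If your intent was Theorem~\ref{main}, your outline is broadly parallel to the paper's argument (form the fibre product $\tilde F=E''^{\gamma}_{\alpha,\beta}\times_{\rE_{V_\gamma}}F^{\gamma}_{\alpha',\beta'}$, stratify by $\cN$, split the resulting triangles by purity, and match each stratum to an RHS summand), but it has a substantive gap at the purity step. You appeal only to Corollary~\ref{semisimple-*!} and Proposition~\ref{pure-*!} to control weights, yet neither covers the composite $(\kappa)_!(\iota)^*$ appearing in the restriction functor: $\iota^*$ need not preserve $\cD^b_{\geqslant\omega}$ and $\kappa_!$ need not preserve $\cD^b_{\leqslant\omega}$ in general. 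The paper's essential input here is Braden's hyperbolic localization theorem (Theorem~\ref{hyperbolic}): one exhibits a $k^*$-action via a one-parameter subgroup of $Q^{\beta'}$ so that $f_!\tilde\iota^*$ on $E''^{\gamma}_{\alpha,\beta}$ is a hyperbolic localization functor (Lemma~\ref{hyperbolic-2}), whence it preserves purity of weakly equivariant complexes. Without this, your $\Ext^1$-vanishing argument for splitting the stratification triangles has no foundation. The paper also does more work than your sketch suggests at the stratum-matching step, introducing intermediate varieties $\cQ_\lambda,\cO_\lambda,\cP_\lambda$ and explicit morphisms $\varphi_\lambda,\psi_\lambda$ (Lemmas~\ref{psi}, \ref{varphi}) rather than a direct identification of $Z_\lambda$ with the RHS correspondence.
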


\begin{corollary}\label{principal}
If $f:X\rightarrow Y$ is a locally trivial principal $G$-bundle, then $f_\flat:\cD^{b,ss}_{G,m}(X)\rightarrow \cD^{b,ss}_m(Y)$ sends pure complexes of weight $\omega$ to pure complexes of weight $\omega$.
\end{corollary}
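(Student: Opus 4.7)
The proof I would give relies on the equivalence of categories established in Proposition 2.5(b), the weight-preservation of smooth pullback (Proposition 2.4(d)), and the purity of simple perverse sheaves (Theorem 2.2). Since $G$ is a connected algebraic group, the locally trivial principal $G$-bundle $f: X \to Y$ is automatically smooth with connected fibres of dimension $d = \dim G$, so Proposition 2.4(d) applies and $f^*$ sends pure complexes of weight $\omega$ to pure complexes of weight $\omega$. What remains is to promote this to a statement about the quasi-inverse $f_\flat$.

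Given $K \in \cD^{b,ss}_{G,m}(X)$ pure of weight $\omega$, I set $L = f_\flat K$. By Proposition 2.5(b) we have $f^* L \simeq K$, so $f^* L$ is pure of weight $\omega$, and I must transfer purity back to $L$. Decomposing $L$ into semisimple summands gives $L \simeq \bigoplus_i N_i[n_i](\frac{n_i}{2})$, where each $N_i$ is a simple perverse sheaf on $Y$, pure of some weight $w_i$ by Theorem 2.2. Since $[n](\frac{n}{2})$ is weight-neutral by Proposition 2.4(a), every summand $N_i[n_i](\frac{n_i}{2})$ is itself pure of weight $w_i$, so $L$ is pure of weight $\omega$ precisely when every nonzero summand satisfies $w_i = \omega$. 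Applying $f^*$ yields
$$K \simeq f^* L \simeq \bigoplus_i f^* N_i[n_i](\tfrac{n_i}{2}),$$
and Proposition 2.4(d) shows each $f^* N_i[n_i](\frac{n_i}{2})$ is pure of weight $w_i$. Because $f^*$ is an equivalence, and hence faithful, $N_i \ne 0$ implies $f^* N_i \ne 0$; the purity of $K$ of weight $\omega$ therefore forces $w_i = \omega$ for every nonzero $N_i$, which is exactly the desired purity of $L$.

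The whole argument is essentially formal bookkeeping of weights, so I do not anticipate a genuine obstacle. The only conceptual point I need to invoke is the elementary principle that a direct sum of nonzero pure complexes is pure of weight $\omega$ if and only if every summand has weight exactly $\omega$; combined with the weight-preservation and faithfulness of $f^*$, this gives the backward implication needed to transfer purity from $K$ to $L$. Every ingredient has already been recorded in the excerpt, so the proof reduces to a short chain of citations plus the semisimple decomposition step.
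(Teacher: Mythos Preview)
Your argument is correct, but it follows a different path from the paper's. The paper argues asymmetrically: for the lower bound it uses the explicit formula for $f_\flat$ in terms of $f_*$ together with Proposition~\ref{pure-*!}(c) to obtain $f_\flat(K)\in\cD^b_{\geqslant\omega}(Y)$ directly; for the upper bound it sets $\omega'=\inf\{\omega''\mid f_\flat(K)\in\cD^b_{\leqslant\omega''}(Y)\}$, pulls back along $f^*$ (which preserves $\cD^b_{\leqslant\omega'}$), and uses $K\simeq f^*f_\flat(K)$ together with the purity of $K$ to force $\omega'=\omega$. You instead bypass the formula for $f_\flat$ entirely: you decompose $L=f_\flat(K)$ into shifts of simple perverse sheaves (using that the target category is semisimple and that simples are pure), and then use only that $f^*$ is a faithful weight-preserving functor to read off the weight of each summand from the known purity of $K$. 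Your route is a bit more structural and never touches $f_*$; the paper's route is shorter once one is willing to unwind the definition of $f_\flat$. One small cosmetic point: writing the decomposition as $\bigoplus_i N_i[n_i](\tfrac{n_i}{2})$ with a built-in Tate twist is nonstandard---a general mixed semisimple complex is just $\bigoplus_i N_i[n_i]$---but this is harmless since a Tate twist of a simple perverse sheaf is again simple, so the argument goes through verbatim with either convention.
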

\begin{proof}
For any pure complex $K\in \cD^{b,ss}_{G,m}(X)$ of weight $\omega$, since $f_*$ preserves $\cD^b_{\geqslant \omega}$, we have $f_\flat(K)\in \cD^b_{\geqslant \omega}(Y)$. Suppose $\omega'=\inf\{\omega''\mid f_\flat(K)\in \cD^b_{\leqslant \omega''}(Y)\}$, then $K\simeq f^*f_\flat(K)\in \cD^b_{\leqslant w'}(X)$, since $f^*$ preserves $\cD^b_{\leqslant \omega'}$. By $K\in \cD^b_{\leqslant \omega}(X)\cap\cD^b_{\geqslant \omega}(X)$, we have $\omega'=\omega$, and so $f_\flat(K)\in \cD^b_{\leqslant \omega}(Y)\cap\cD^b_{\geqslant \omega}(Y)$ is pure of weight $\omega$.
\end{proof}

\begin{proposition}[\cite{Pramod-2021}, Theorem 6.6.16]\label{sub-quotient-equivariant}
Assume that $G$ acts on a $k$-variety $X$, then\\
(a) if $H$ is a subgroup of $G$, then there is a forgetful functor $\cD^b_{G,m}(X)\xrightarrow{\textrm{forget}}\cD^b_{H,m}(X)$, which is not fully faithful in general.\\
(b) if $U$ is a unipotent normal subgroup of $G$ which acts trivially on $X$, suppose $G=T\ltimes U$ such that $G/U\simeq T$, then the forget functor $\cD^b_{G,m}(X)\xrightarrow{\textrm{forget}}\cD^b_{T,m}(X)$ is an equivalence.
\end{proposition}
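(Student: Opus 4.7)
I would adopt the Bernstein--Lunts simplicial model of the mixed equivariant derived category, or equivalently view $\cD^b_{G,m}(X)$ as the derived category of mixed constructible sheaves on the quotient stack $[X/G]$. This makes available the natural morphism $\pi\colon [X/H]\to [X/G]$ attached to any inclusion $H\hookrightarrow G$. Part (a) is then essentially formal: if $P$ is a smooth scheme with free $G$-action approximating a classifying bundle, then $P$ is simultaneously free over $H$, and the induced morphism $(P\times X)/H\to(P\times X)/G$ defines the forgetful functor by pullback. Its failure of fully faithfulness reflects the fact that a $G$-equivariant structure records a $G/H$-action as additional datum; for example over $X=\mathrm{pt}$ with $H=\{1\}$ every $G$-equivariant local system restricts to the trivial one but is itself a nontrivial $G$-representation.

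\textbf{Part (b), the main case.} I want to show that the morphism $\pi\colon [X/T]\to[X/G]$ is an affine-space bundle. Since $U$ is normal with $G=T\ltimes U$, the coset variety $G/T$ is isomorphic to $U$, and $U$ being a smooth connected unipotent group over the algebraically closed field $k$ is filtered by closed normal subgroups with $\mathbb{G}_a$-quotients, hence $U\cong\bba^n$ as a $k$-variety with $n=\dim U$. At the level of the Bernstein--Lunts approximations this translates into: the maps $(P\times X)/T\to(P\times X)/G$ are Zariski-locally trivial $\bba^n$-bundles. For any such bundle $f\colon Y\to Z$ we have $f_*\overline{\bbQ}_l\simeq\overline{\bbQ}_l$, because $H^*(\bba^n,\overline{\bbQ}_l)$ is pure of weight $0$ concentrated in degree $0$ (here it is essential that $l\neq p$). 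The projection formula then gives $f_*f^*K\simeq K$, so $\pi^*$ is fully faithful. Essential surjectivity can be obtained either by Zariski descent using the local triviality of the bundle, or by writing down a quasi-inverse directly: since $U$ acts trivially on $X$, any $T$-equivariant complex admits a canonical $G$-equivariant lift in which the $U$-part of the equivariance datum is trivial, and unipotence of $U$ together with triviality of its action on $X$ eliminates any cocycle obstruction.

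\textbf{Main obstacle.} The subtlest point is to verify that the resulting equivalence is compatible with the mixed structure and with the six-functor formalism. This reduces to the purity of $H^*(\bba^n,\overline{\bbQ}_l)$ together with the fact that the Tate twists introduced by relative duality $\pi^!=\pi^*[2n](n)$ cancel against those coming from the cohomology of the fibres, so that weights are preserved; the relevant bookkeeping is exactly Proposition~\ref{pure-*!}(a),(d). A second technical difficulty is that everything has to be carried out on a cofinal system of smooth truncated resolutions in the Bernstein--Lunts framework, so that the bundle, cohomological triviality, and descent statements assemble coherently across simplicial levels; this is the point where one invokes the stack-theoretic machinery of \cite{Pramod-2021} and \cite{Bernstein-Lunts-1994} together with the weight formalism of \cite{Beilinson-Bernstein-Deligne-1982}.
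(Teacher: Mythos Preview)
The paper does not supply its own proof of this proposition: it is simply quoted from \cite{Pramod-2021}, Theorem~6.6.16, and used as a black box. So there is no ``paper's proof'' to compare against.

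That said, your outline is the standard one and is essentially what lies behind the cited reference. The key mechanism for part~(b) is exactly what you identify: because $U$ acts trivially and $G=T\ltimes U$, the morphism of quotient stacks $[X/T]\to[X/G]$ (equivalently, the map of Bernstein--Lunts approximations $(P\times X)/T\to(P\times X)/G$) has fibres $G/T\cong U\cong\bba^n$, and pullback along an affine-space bundle is an equivalence on bounded constructible derived categories. Your argument for essential surjectivity via the section $T\hookrightarrow G$ of $G\twoheadrightarrow T$ is clean and avoids descent altogether: the composite $T\to G\to T$ is the identity, so the induced composite $\cD^b_{T,m}(X)\to\cD^b_{G,m}(X)\to\cD^b_{T,m}(X)$ is the identity, and full faithfulness handles the other direction.

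Two small points worth tightening. First, the isomorphism $U\cong\bba^n$ as a variety over $k=\overline{\bbF}_q$ is not entirely automatic in positive characteristic; it holds for the unipotent radicals of parabolic subgroups of $\rG_{V_\nu}$ that actually arise in the paper (they are even vector groups), but you should either restrict to that case or invoke the appropriate structure theorem for split unipotent groups. Second, the Zariski local triviality of the $\bba^n$-bundle is needed for the projection-formula step $\pi_*\overline{\bbQ}_l\simeq\overline{\bbQ}_l$; for the parabolic quotients in question this is again immediate, but in your general formulation it deserves a sentence.
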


\subsection{Hyperbolic localization}\ 

In this subsection, we review a result in \cite{Braden-2003}. Another proof without the assumption that $X$ is a normal variety is given by \cite{Drinfeld-Gaitsgory-2014}.

Let $X$ be a (normal) $k$-variety together with a $k^*$-action. We denote by $X^{k^*}$ the subvariety of fixed points with connected components $X_1,...,X_r$, and define
\begin{align*}
X_i^+=\{x\in X\mid \lim_{t\rightarrow 0}t.x\in X_i\}
\end{align*}
for $i=1,...,r$. Let $X^+=\bigsqcup^r_{i=1}X_i^+$ be the disjoint union of them, and 
\begin{align*}
&f^+:X^{k^*}=\bigsqcup^r_{i=1}X_i\hookrightarrow \bigsqcup^r_{i=1}X_i^+=X^+,\\
&g^+:X^+\hookrightarrow X
\end{align*}
be inclusions. Then the hyperbolic localization functor is defined by 
\begin{align*}
(-)^{!*}:\cD^b(X)&\rightarrow \cD^b(X^{k^*})\\
K&\mapsto(f^+)^!(g^+)^*(K)
\end{align*}

An object $K$ in $\cD^b(X)$ is said to be weakly equivariant, if $\mu^*(K)\simeq L\boxtimes K$ for some locally constant sheaf $L$ on $k^*$, where $\mu:k^*\times X\rightarrow X$ is the map defining the action. Note that if $K$ is a $k^*$-equivariant perverse sheaf (or more generally, a $k^*$-equivariant semisimple complex) on $X$, then $K$ is weakly equivaraint, since $\mu^*(K)\simeq p^*(K)\simeq \overline{\bbQ}_l\boxtimes K$, where $p:k^*\times X\rightarrow X$ is the natural projection.
 
\begin{theorem}[\cite{Braden-2003},Theorem 8]\label{hyperbolic}
The hyperbolic localization functor preserves purity of weakly equivariant mixed complexes, and so it sends semisimple complexes to semisimple complexes.
\end{theorem}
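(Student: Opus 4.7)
The plan is to write both sides of the formula as single push--pull expressions through explicit geometric correspondences and then compare them stratum by stratum, following the spirit of Lusztig's proof of the Mackey-type identity for $A,B\in\cQ_{V_\alpha}\boxtimes \cQ_{V_\beta}$ in \cite{Lusztig-1991}. The new ingredient needed to treat arbitrary $A,B\in \cD^{b,ss}_{\rG,m}$ rather than just objects of $\cQ$ is Theorem \ref{hyperbolic}: hyperbolic localization along a suitable $k^*$-action will replace Lusztig's smooth-resolution arguments (which are available only on $\cQ$) and will ensure that each stratum contribution remains a semisimple mixed complex. The four steps are: (i) collapse the left-hand side into a push--pull through the fiber product $Z$ of the two correspondences defining $\Ind^{\gamma}_{\alpha,\beta}$ and $\Res^{\gamma}_{\alpha',\beta'}$; (ii) stratify $Z=\bigsqcup_{\lambda\in\cN}Z_\lambda$ so that the push--pull decomposes as a direct sum of contributions from each $Z_\lambda$; (iii) rewrite each summand of the right-hand side as a push--pull through a variety $W_\lambda$; and (iv) exhibit stratum-wise isomorphisms $Z_\lambda \leftrightarrow W_\lambda$ compatible with the relevant maps, then sum over $\lambda$.

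\textbf{Rewriting each side.} For step (i), forming the cartesian square gives a $\rG_{V_\gamma}$-variety $Z$ whose points encode a representation on $V_\gamma$ together with a $\beta$-dimensional subrepresentation $U$ (from $\Ind$) and an $\alpha'$-dimensional subrepresentation $U'$ (from $\Res$); proper base change then identifies $\Res^{\gamma}_{\alpha',\beta'}\Ind^{\gamma}_{\alpha,\beta}(A\boxtimes B)$ with a push--pull through $Z$. For step (ii), I would stratify $Z$ by the isomorphism types of the four subquotients of the bifiltration determined by $(U,U')$, indexed precisely by $\lambda=(\alpha_1,\alpha_2,\beta_1,\beta_2)\in \cN$. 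A carefully chosen one-parameter subgroup $k^*\hookrightarrow\rG_{V_\gamma}$ has fixed locus on $Z$ equal to $\bigsqcup_\lambda Z_\lambda$, and its attracting decomposition realizes the direct-sum splitting of the pushed complex; Theorem \ref{hyperbolic} guarantees that the resulting pieces are semisimple and mixed. For step (iii), each right-hand summand is a push--pull through a diagram $\rE_{V_{\alpha'}}\!\times\!\rE_{V_{\beta'}}\leftarrow F^{(1)}_\lambda \xleftarrow{\tau_\lambda} F^{(2)}_\lambda \to \rE_{V_{\alpha_1}}\!\times\!\cdots\!\times\!\rE_{V_{\beta_2}} \leftarrow \rE_{V_\alpha}\!\times\!\rE_{V_\beta}$, and iterating proper base change plus the projection formula collapses it into a single push--pull through a variety $W_\lambda$ of compatible partial-flag data, which I expect to match the stratum $Z_\lambda$ up to a smooth fibration and a principal-bundle quotient.

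\textbf{Matching and the main obstacle.} For step (iv), I would exhibit a common smooth cover with connected fibers $Y_\lambda \to Z_\lambda$ and $Y_\lambda \to W_\lambda^\sharp$, where $W_\lambda^\sharp \to W_\lambda$ is a locally trivial principal bundle for a product of general linear groups coming from the choices of splittings of the bifiltration. Proposition \ref{Lusztig-principal}(b) together with Corollary \ref{principal} descend through the principal-bundle leg without loss of semisimplicity or purity; Proposition \ref{pure-*!}(d,e), Corollary \ref{semisimple-*!}, and the proper base change theorem control the smooth and proper legs. A dimension count on this diagram accounts for the shift and Tate twist $[-(\alpha_2,\beta_1)](-(\alpha_2,\beta_1)/2)$ appearing in the theorem. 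The main obstacle will be step (ii): for objects of $\cQ$ the strata appear as summands of pushforwards from flag varieties and the decomposition theorem applies transparently, but for general $A,B\in\cD^{b,ss}$ no such explicit presentation is available, so one must construct the $k^*$-subgroup of $\rG_{V_\gamma}$ whose fixed set on $Z$ is exactly $\bigsqcup_\lambda Z_\lambda$ and verify that hyperbolic localization extracts the intended stratum contributions in a way compatible with the residual $\rG_{V_{\alpha'}}\times\rG_{V_{\beta'}}$-equivariance on the output. Getting this right, with all shifts and Tate twists tracked through the principal-bundle equivalences of Proposition \ref{Lusztig-principal}, is the technical heart of the argument and is precisely what makes the proof logically independent of trace maps and of Green's formula.
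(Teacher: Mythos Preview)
Your proposal is addressed to the wrong statement. The theorem labeled \texttt{hyperbolic} is Braden's result that the hyperbolic localization functor $(f^+)^!(g^+)^*$ preserves purity of weakly $k^*$-equivariant mixed complexes; it is a self-contained statement about a single variety with a $k^*$-action and has nothing to do with the induction/restriction formula. The paper does not prove this theorem at all: it is quoted from \cite{Braden-2003} (with an alternate reference to \cite{Drinfeld-Gaitsgory-2014}) and used as a black box. What you have sketched is instead an outline of the proof of the main Theorem~\ref{main} (equivalently Theorem~1.1), which \emph{uses} Theorem~\ref{hyperbolic} as a key input rather than establishing it.

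If the task is really to supply a proof of Theorem~\ref{hyperbolic}, your outline is simply off-target: nothing you wrote addresses why $(\pi^+)_!(g^+)^*$ preserves weights on weakly equivariant complexes. A correct argument (following Braden) proceeds by showing that for weakly equivariant $K$ the two descriptions $(f^+)^!(g^+)^*K$ and $(f^-)^*(g^-)^!K$ of hyperbolic localization are naturally isomorphic; since $(-)^*$ and $(-)_!$ lower weights while $(-)^!$ and $(-)_*$ raise them, the simultaneous presentation forces purity. None of the fiber-product, stratification, or principal-bundle machinery you invoke is relevant to that argument. If, on the other hand, you intended to prove Theorem~\ref{main}, your sketch is in the right spirit and indeed close to the paper's Section~3, but you should relabel it accordingly and not present it as a proof of the cited hyperbolic-localization theorem.
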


\begin{remark}
By the proof of this theorem, the hyperbolic localization functors preserve the weights of pure weakly equivariant complexes.
\end{remark}

The hyperbolic localization functors have another descriptions. We define 
\begin{align*}
\pi^+:X^+&\rightarrow X^{k^*}\\
x&\mapsto \lim_{t\rightarrow 0}t.x
\end{align*}
then for any weakly equivariant object $K$ in $\cD^b(X)$, by the formula (1) in Section 3 of \cite{Braden-2003}, there are natural isomorphisms
\begin{align*} 
K^{!*}\simeq (\pi^+)_!(g^+)^*(K).
\end{align*}

\subsection{Moduli variety of representations}\

In this subsection, we set up the $k$-variety we work on.

Let $Q=(I,H,s,t)$ be a finite quiver, where $I$ is the set of vertices, $H$ is the set of arrows, $s(h)\in I$ is the source and $t(h)\in I$ is the target for an arrow $h\in H$. We denote by $s(h)=h',t(h)=h''$ for simplicity. 

For each $\nu\in \bbN I$, we fix a $I$-graded $k$-vector space $V_\nu$ of dimension vector $\nu$ with a given $\bbF_q$-rational structure with Frobenius map $F:V_\nu\rightarrow V_\nu$, see \cite{Lusztig-1998}, and define the affine space 
$$\rE_{V_\nu}=\bigoplus_{h\in H}\Hom_k((V_\nu)_{h'},(V_\nu)_{h''}).$$
The algebraic group 
$$\rG_{V_\nu}=\prod_{i\in I}GL_k((V_\nu)_i)$$ 
acts on $\rE_{V_\nu}$ by $(g.x)_h=g_{h''}x_hg_{h'}^{-1}$ for $g\in \rG_{V_\nu},x\in \rE_{V_\nu},h\in H$.

The Euler form and the symmetric Euler form are defined by 
\begin{align*}
\langle\nu,\nu'\rangle&=\sum_{i\in I}\nu_i\nu'_i-\sum_{h\in H}\nu_{h'}\nu'_{h''},\\
(\nu,\nu')&=\langle\nu,\nu'\rangle+\langle\nu',\nu\rangle
\end{align*}
for any $\nu,\nu'\in \bbN I$ respectively.

\subsection{Induction functor}\

In this subsection, we review the definition of induction functor, see \cite[Section 3]{Lusztig-1991} or \cite[Section 9.2]{Lusztig-1993} for more details.

Given $\nu\in \bbN I$ and $x\in \rE_{V_\nu}$, a $I$-graded subspace $W$ of $V_\nu$ is said to be $x$-stable, if $x_h(W_{h'})\subset W_{h''}$ for any $h\in H$. 

Given $\nu=\nu'+\nu''\in \bbN I$, $x\in \rE_{V_\nu}$ and a $x$-stable $I$-graded subspace $W$ of dimension vector $\nu''$, we denote by $x|_W:W\rightarrow W$ the restriction of $x$ to $W$, and denote by $\overline{x}^W:V_\nu/W\rightarrow V_\nu/W$ the quotient of $x$ on $V_\nu/W$. Given $I$-graded linear isomorphisms $\rho_1:V_\nu/W\xrightarrow{\simeq} V_{\nu'}$ and $\rho_2:W\xrightarrow{\simeq}V_{\nu''}$, we define $\rho_{1*}\overline{x}^W\in \rE_{V_{\nu'}}$ and $\rho_{2*}x|_W\in \rE_{V_{\nu''}}$ by $(\rho_{1*}\overline{x}^W)_h=(\rho_{1h''})(\overline{x}^W)_h(\rho_{1h'})^{-1}$ and $(\rho_{2*}x|_W)_h=(\rho_{2h''})(x|_W)_h(\rho_{2h'})^{-1}$ for any $h\in H$ respectively.

Let $E''^{\nu}_{\nu',\nu''}$ be the variety of pairs $(x,W)$, where $x\in \rE_{V_\nu}$ and $W$ is a $x$-stable $I$-graded subspace of $V_\nu$ of dimension vector $\nu''$. Let $E'^{\nu}_{\nu',\nu''}$ be the variety of quadruples $(x,W,\rho_1,\rho_2)$, where $(x,W)\in E''^{\nu}_{\nu',\nu''}$ and $\rho_1:V_\nu/W\xrightarrow{\simeq} V_{\nu'},\rho_2:W\xrightarrow{\simeq}V_{\nu''}$ are $I$-graded linear isomorphisms. Then $\rG_{V_{\nu'}}\times \rG_{V_{\nu''}}\times \rG_{V_\nu}$ acts on $E'^{\nu}_{\nu',\nu''}$ by $(g_1,g_2,g).(x,W,\rho_1,\rho_2)=(g.x,g(W),g_1\rho_1g^{-1},g_2\rho_2g^{-1})$ for $(g_1,g_2,g)\in \rG_{V_{\nu'}}\times \rG_{V_{\nu''}}\times \rG_{V_\nu}, (x,W,\rho_1,\rho_2)\in E'^{\nu}_{\nu',\nu''}$, and $\rG_{V_\nu}$ acts on $E''^{\nu}_{\nu',\nu''}$ by $g.(x,W)=(g.x, g(W))$ for $g\in \rG_{V_\nu},(x,W)\in E'^{\nu}_{\nu',\nu''}$.

Consider the following diagram
\begin{diagram}[midshaft,size=2em]
\rE_{V_{\nu'}}\times \rE_{V_{\nu''}} &\lTo^{{p_1}^{\nu}_{\nu',\nu''}} &E'^{\nu}_{\nu',\nu''} &\rTo^{{p_2}^{\nu}_{\nu',\nu''}} &E''^{\nu}_{\nu',\nu''} &\rTo^{{p_3}^{\nu}_{\nu',\nu''}} &\rE_{V_\nu}\\
(\rho_{1*}\overline{x}^W,\rho_{2*}x|_W)&\lMapsto &(x,W,\rho_1,\rho_2) &\rMapsto &(x,W) &\rMapsto &x,
\end{diagram}
where ${p_1}^{\nu}_{\nu',\nu''}$ is smooth with connected fibres and $\rG_{V_{\nu'}}\times \rG_{V_{\nu''}}\times \rG_{V_\nu}$-equivariant with respect to the trivial action of $\rG_{V_\nu}$ on $\rE_{V_{\nu'}}\times \rE_{V_{\nu''}}$, ${p_2}^{\nu}_{\nu',\nu''}$ is a principal $\rG_{V_{\nu'}}\times \rG_{V_{\nu''}}$-bundle and $\rG_{V_{\nu}}$-equivariant, ${p_3}^{\nu}_{\nu',\nu''}$ is proper and $\rG_{V_\nu}$-equivariant. By Corollary \ref{semisimple-*!} and Proposition \ref{Lusztig-principal}, we may define the induction functor by 
\begin{align*}
&\cD^{b,ss}_{\rG_{V_{\nu'},m}}(\rE_{V_{\nu'}})\boxtimes \cD^{b,ss}_{\rG_{V_{\nu''},m}}(\rE_{V_{\nu''}})\xrightarrow{-\boxtimes-}\cD^{b,ss}_{\rG_{V_{\nu'}}\times\rG_{V_{\nu''},m}}(\rE_{V_{\nu'}}\times \rE_{V_{\nu''}})\\
&=\cD^{b,ss}_{\rG_{V_{\nu'}}\times\rG_{V_{\nu''}}\times \rG_{V_\nu},m}(\rE_{V_{\nu'}}\times \rE_{V_{\nu''}})\xrightarrow{({p_1}^{\nu}_{\nu',\nu''})^*}\cD^{b,ss}_{\rG_{V_{\nu'}}\times \rG_{V_{\nu''}}\times \rG_{V_\nu},m}(E'^{\nu}_{\nu',\nu''})\\&\xrightarrow{({p_2}^{\nu}_{\nu',\nu''})_\flat}\cD^{b,ss}_{\rG_{V_\nu},m}(E''^{\nu}_{\nu',\nu''})\xrightarrow{({p_3}^{\nu}_{\nu',\nu''})_!}\cD^{b,ss}_{\rG_{V_\nu},m}(\rE_{V_\nu})\\
&\ \ \ \ \ \ \ \ \ \ \ \ \Ind^{\nu}_{\nu',\nu''}(A\boxtimes B)=({p_3}^{\nu}_{\nu',\nu''})_!({p_2}^{\nu}_{\nu',\nu''})_\flat({p_1}^{\nu}_{\nu',\nu''})^*(A\boxtimes B)[d_1-d_2](\frac{d_1-d_2}{2})\end{align*}
for $A\in \cD^{b,ss}_{\rG_{V_{\nu'}},m}(\rE_{V_{\nu'}}),B\in \cD^{b,ss}_{\rG_{V_{\nu''}},m}(\rE_{V_{\nu''}})$, where $d_1,d_2$ are the dimensions of the fibres of ${p_1}^{\nu}_{\nu',\nu''},{p_2}^{\nu}_{\nu',\nu''}$ respectively, and we have 
$$d_1-d_2=\sum_{h\in H}\nu'_{h'}\nu''_{h''}+\sum_{i\in I}\nu'_i\nu''_i.$$

\subsection{Restriction functor}\

In this subsection, we review the definition of restriction functor, see \cite[Section 4]{Lusztig-1991} or \cite[Section 9.2]{Lusztig-1993} for more details.

Given $\nu=\nu'+\nu''\in \bbN I$, we fix a $I$-graded subspace $W^{\nu''}$ of $V_\nu$ of dimension vector $\nu''$, and fix two $I$-graded linear isomorphisms $\rho_1^{\nu''}:V_\nu/W^{\nu''}\xrightarrow {\simeq}V_{\nu'}, \rho_2^{\nu''}:W^{\nu''}\xrightarrow{\simeq}V_{\nu''}$. 

Let $Q^{\nu''}\subset\rG_{V_\nu}$ be the stabilizer of $W^{\nu''}\subset V_{\nu}$ which is a parabolic subgroup, and let $U^{\nu''}\subset Q^{\nu''}$ be its unipotent radical, then there is a canonical isomorphism $Q^{\nu''}/U^{\nu''}\simeq \rG_{V_\nu'}\times \rG_{V_\nu''}$.

Let $F^{\nu}_{\nu',\nu''}$ be the closed subvariety of $\rE_{V_\nu}$ consisting of $x$ such that $W^{\nu''}$ is $x$-stable. Then $Q^{\nu''}$ acts on $F^{\nu}_{\nu',\nu''}$, and $Q^{\nu''}$ acts on $\rE_{V_{\nu'}}\times \rE_{V_{\nu''}}$ through the quotient $Q^{\nu''}/U^{\nu''}\simeq \rG_{V_{\nu'}}\times \rG_{V_{\nu''}}$. 

Consider the following diagram
\begin{diagram}[midshaft,size=2em]
\rE_{V_{\nu'}}\times \rE_{V_{\nu''}} &\lTo^{\kappa^{\nu}_{\nu',\nu''}} &F^{\nu}_{\nu',\nu''} &\rInto^{\iota^{\nu}_{\nu',\nu''}} &\rE_{V_\nu}\\
(\rho_{1*}^{\nu''}\overline{x}^{W^{\nu''}},\rho_{2*}^{\nu''}x|_{W^{\nu''}}) &\lMapsto &x &\rMapsto &x,
\end{diagram}
where $\kappa^{\nu}_{\nu',\nu''}$ is a vector bundle of rank $\sum_{h\in H}\nu'_{h'}\nu''_{h''}$ and $Q^{\nu''}$-equivariant, $\iota^{\nu}_{\nu',\nu''}$ is the inclusion and $Q^{\nu''}$-equivariant. 
Note that the group $U^{\nu''}$ acts trivially on $\rE_{V_{\nu'}}\times \rE_{V_{\nu''}}$, by Proposition 2.7, there is an equivalence $\cD^b_{Q^{\nu''},m}(\rE_{V_{\nu'}}\times \rE_{V_{\nu''}})\xrightarrow{\simeq}\cD^b_{\rG_{V_{\nu'}}\times \rG_{V_{\nu''}},m}(\rE_{V_{\nu'}}\times \rE_{V_{\nu''}})$, By Proposition \ref{Lusztig-principal} and Proposition \ref{sub-quotient-equivariant}, we may define the restriction functor by
\begin{align*}
&\cD^b_{\rG_{V_\nu},m}(\rE_{V_{\nu}})\xrightarrow{\textrm{forget}}\cD^b_{Q^{\nu''},m}(\rE_{V_{\nu}})\xrightarrow{(\iota^{\nu}_{\nu',\nu''})^*}\cD^b_{Q^{\nu''},m}(F^{\nu}_{\nu',\nu''})\\
&\xrightarrow{(\kappa^{\nu}_{\nu',\nu''})_!}\cD^b_{Q^{\nu''},m}(\rE_{V_{\nu'}}\times \rE_{V_{\nu''}})\xrightarrow{\simeq}\cD^b_{\rG_{V_{\nu'}}\times \rG_{V_{\nu''}},m}(\rE_{V_{\nu'}}\times \rE_{V_{\nu''}})\\
&\Res^{\nu}_{\nu',\nu''}(C)=(\kappa^{\nu}_{\nu',\nu''})_!(\iota^{\nu}_{\nu',\nu''})^*(C)[-\langle\nu',\nu''\rangle](-\frac{\langle\nu',\nu''\rangle}{2})
\end{align*}
for $C\in \cD^b_{\rG_{V_\nu},m}(\rE_{V_{\nu}})$.

\begin{proposition}\label{res-hyperbolic}
For any $\nu=\nu'+\nu''\in \bbN I$, the restriction functor $\Res^{\nu}_{\nu',\nu''}$ sends semisimple complexes to semisimple complexes.
\end{proposition}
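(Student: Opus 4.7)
\emph{Proof plan.} The plan is to identify $\Res^\nu_{\nu',\nu''}$, up to an even shift and a Tate twist, with a hyperbolic localization functor on $\rE_{V_\nu}$, and then invoke Theorem~\ref{hyperbolic}. Fix an $I$-graded complement $V_\nu^{(1)}$ of $W^{\nu''}$ in $V_\nu$, and use the resulting isomorphism $V_\nu^{(1)}\cong V_\nu/W^{\nu''}$ composed with $\rho_1^{\nu''}$ to identify $V_\nu^{(1)}$ with $V_{\nu'}$. Define a one-parameter subgroup $\lambda:k^*\to\rG_{V_\nu}$ by letting $\lambda(t)$ act trivially on $V_\nu^{(1)}$ and as $t\cdot\mathrm{id}$ on $W^{\nu''}$. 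Writing each $x_h\in\Hom_k((V_\nu)_{h'},(V_\nu)_{h''})$ in block form $\bigl(\begin{smallmatrix}A_h & B_h\\ C_h & D_h\end{smallmatrix}\bigr)$ relative to $V_\nu=V_\nu^{(1)}\oplus W^{\nu''}$, a direct computation gives $(\lambda(t).x)_h=\bigl(\begin{smallmatrix}A_h & t^{-1}B_h\\ tC_h & D_h\end{smallmatrix}\bigr)$.

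Consequently $\lim_{t\to 0}\lambda(t).x$ exists if and only if $B_h=0$ for every $h$, equivalently $W^{\nu''}$ is $x$-stable; this identifies the attractor of the $k^*$-action on $\rE_{V_\nu}$ with $F^\nu_{\nu',\nu''}$ and the inclusion $g^+$ with $\iota^\nu_{\nu',\nu''}$. The fixed locus is cut out by $B_h=C_h=0$, giving $\rE_{V_\nu^{(1)}}\times\rE_{W^{\nu''}}\cong\rE_{V_{\nu'}}\times\rE_{V_{\nu''}}$, and the limit map $\pi^+$ sends $x$ to the block-diagonal element $(A,D)$, which coincides with $\kappa^\nu_{\nu',\nu''}(x)$ after applying $\rho_1^{\nu''}$ and $\rho_2^{\nu''}$. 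Using the identity $C^{!*}\simeq(\pi^+)_!(g^+)^*C$ recalled just after Theorem~\ref{hyperbolic}, we get $C^{!*}\simeq(\kappa^\nu_{\nu',\nu''})_!(\iota^\nu_{\nu',\nu''})^*C$, and therefore $\Res^\nu_{\nu',\nu''}(C)\simeq C^{!*}[-\langle\nu',\nu''\rangle](-\langle\nu',\nu''\rangle/2)$.

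Since $C$ is $\rG_{V_\nu}$-equivariant, it is $k^*$-equivariant via $\lambda$ by Proposition~\ref{sub-quotient-equivariant}(a), and in particular weakly $k^*$-equivariant. Theorem~\ref{hyperbolic} then shows that $C^{!*}$ is semisimple whenever $C$ is, and the overall shift and Tate twist preserve semisimplicity. The main obstacle is just the identification of the attractor data with $(\iota^\nu_{\nu',\nu''},\kappa^\nu_{\nu',\nu''})$ carried out above; once that is done the proposition is immediate from Theorem~\ref{hyperbolic}. A minor subtlety is that $\rG_{V_{\nu'}}\times\rG_{V_{\nu''}}$-equivariance of the output is built into the definition of $\Res^\nu_{\nu',\nu''}$ via Proposition~\ref{sub-quotient-equivariant}(b), while equivariant semisimplicity for this connected group agrees with non-equivariant semisimplicity, so applying Theorem~\ref{hyperbolic} at the non-equivariant level suffices.
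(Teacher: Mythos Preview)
Your proposal is correct and follows essentially the same approach as the paper: both arguments choose an $I$-graded complement of $W^{\nu''}$, introduce the one-parameter subgroup $t\mapsto \mathrm{Id}\oplus t\,\mathrm{Id}_{W^{\nu''}}$, identify the attractor and fixed locus with $F^\nu_{\nu',\nu''}$ and $\rE_{V_{\nu'}}\times\rE_{V_{\nu''}}$ respectively, and then invoke Theorem~\ref{hyperbolic}. The only cosmetic difference is that the paper packages the identification into an explicit commutative diagram and passes through $Q^{\nu''}$-equivariance, whereas you argue directly via $\rG_{V_\nu}$-equivariance and the forgetful functor.
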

\begin{proof}
We claim that 
$$\cD^b_{m}(\rE_{V_{\nu}})\xrightarrow{(\iota^{\nu}_{\nu',\nu''})^*}\cD^b_{m}(F^{\nu}_{\nu',\nu''})
\xrightarrow{(\kappa^{\nu}_{\nu',\nu''})_!}\cD^b_{m}(\rE_{V_{\nu'}}\times \rE_{V_{\nu''}})$$
is a hyperbolic localization functor. 

We fix a $I$-graded direct sum decomposition $V_\nu=\tilde{W}\oplus W^{\nu''}$ and a $I$-graded linear isomorphism $\rho:\tilde{W}\xrightarrow{\simeq}V_\nu/W^{\nu''}$. Then there is a bijection  
\begin{align*}
\rE_{V_{\nu'}}\!\times\! \rE_{V_{\nu''}}&\!\times\! \bigoplus_{h\in H}\Hom_k((V_{\nu'})_{h'},(V_{\nu''})_{h''})\!\times\! \bigoplus_{h\in H}\Hom_k((V_{\nu''})_{h'},(V_{\nu'})_{h''})\simeq \rE_{V_\nu},\\
&(x',x'',y,w)\mapsto \begin{pmatrix}
 \rho_1^{\nu''}\rho &\\
  &\rho_2^{\nu''}\end{pmatrix}^{-1}\begin{pmatrix}
 x' &w\\
 y &x''\end{pmatrix}\begin{pmatrix}
 \rho_1^{\nu''}\rho &\\
  &\rho_2^{\nu''}\end{pmatrix}
\end{align*}
where we write elements of $\rE_{V_\nu}$ as block matrices with respect to the decomposition $V_\nu=\tilde{W}\oplus W^{\nu''}$.

There is a one-parameter subgroup embedding $\zeta:k^*\hookrightarrow Q^{\nu''}$ given by $t\mapsto \textrm{Id}_{V_{\nu'}}\bigoplus t\textrm{Id}_{V_{\nu''}}$ such that $k^*$ acts on $E_{V_\nu}$ via $t.x=\zeta(t).x$. More precisely, under above bijection, the $k^*$-action is given by $t.(x',x'',y,w)=(x',x'',ty,t^{-1}w)$. Then there is a commutative diagram
\begin{diagram}[midshaft,size=2em]
(\rE_{V_\nu})^{k^*} &\lTo^{\pi^+}  &(\rE_{V_{\nu}})^+ &\rInto^{g^+} &\rE_{V_\nu}\\
\dTo^\simeq & &\dTo^\simeq & &\vEq\\
\rE_{V_{\nu'}}\times \rE_{V_{\nu''}} &\lTo^{\kappa^{\nu}_{\nu',\nu''}} &F^{\nu}_{\nu',\nu''} &\rInto^{\iota^{\nu}_{\nu',\nu''}} &\rE_{V_\nu}.
\end{diagram}
Hence $(\kappa^{\nu}_{\nu',\nu''})_!(\iota^{\nu}_{\nu',\nu''})^*:\cD^b_{m}(\rE_{V_{\nu}})\rightarrow \cD^b_{m}(\rE_{V_{\nu'}}\times \rE_{V_{\nu''}})$ is a hyperbolic localization functor. Moreover, for any $C\in \cD^{b,ss}_{Q^{\nu''},m}(\rE_{V_{\nu}})$ regarded as an object in $\cD^{b,ss}_{k^*,m}(\rE_{V_{\nu}})$ via the embedding $\zeta:k^*\hookrightarrow Q^{\nu''}$ and the forgetful functor $\cD^{b,ss}_{Q^{\nu''},m}(\rE_{V_{\nu}})\xrightarrow{\textrm{forget}}\cD^{b,ss}_{k^*,m}(\rE_{V_{\nu}})$, it is weakly equivariant. By Theorem \ref{hyperbolic}, $(\kappa^{\nu}_{\nu',\nu''})_!(\iota^{\nu}_{\nu',\nu''})^*(C)$ is a semisimple complex on $\rE_{V_{\nu'}}\times \rE_{V_{\nu''}}$, and by Proposition \ref{Lusztig-principal}, $(\kappa^{\nu}_{\nu',\nu''})_!(\iota^{\nu}_{\nu',\nu''})^*(C)\in \cD^{b,ss}_{Q^{\nu''},m}(\rE_{V_{\nu'}}\times \rE_{V_{\nu''}})$ is $Q^{\nu''}$-equivariant. Therefore, 
$$\Res^{\nu}_{\nu',\nu''}:\cD^{b,ss}_{\rG_{V_\nu},m}(\rE_{V_{\nu}})\rightarrow\cD^{b,ss}_{\rG_{V_{\nu'}}\times \rG_{V_{\nu''}},m}(\rE_{V_{\nu'}}\times \rE_{V_{\nu''}}).$$
\end{proof}

\subsection{Categorification of $U_v^-$}\label{categorification}\

In this subsection, we review Lusztig's categorification for $U_v^-$, see \cite[Chapter 9-13]{Lusztig-1993} or \cite{Schiffmann-2012} for more details.

For any $\nu\in \bbN I$, we denote by $\mathcal{V}_\nu$ the set of sequences of the form $\bnu=(\nu^1,...,\nu^m)$, where each $\nu^l$ is of the form $ni$ for some $n>0$ and $i\in I$, satisfying $\nu=\sum^m_{l=1}\nu^l$.

For any $\bnu\in \mathcal{V}_\nu$, a flag of type $\bnu$ is a sequence of $I$-graded subspaces 
$$f=(V_\nu=V^0\supset V^1\supset...\supset V^m=0),$$
where the dimension vector of $V^{l-1}/V^l$ is $\nu^l$ for $l=1,...,m$. Moreover, for $x\in \rE_{V_\nu}$, such a flag is said to be $x$-stable, if $x_h(V^l_{h'})\subset V^l_{h''}$ for $l=1,...,m$ and $h\in H$.

For any $\nu\in \bbN I,\bnu\in \mathcal{V}_\nu$, we denote by $\tilde{\cF}_{\bnu}$ the variety of pairs $(x,f)$, where $x\in \rE_{V_\nu}$ and $f$ is a $x$-stable flag of type $\bnu$. The algebraic group $\rG_{V_\nu}$ acts on it via $g.(x,f)=(g.x,g.f)$, where 
$$g.f=(V_\nu=g(V^0)\supset g(V^1)\supset...\supset g(V^m)=0)$$
for $g\in \rG_{V_\nu},(x,f)\in \tilde{\cF}_{\bnu}$.

\begin{lemma}[\cite{Lusztig-1993}, Section 9.1.3]
The variety $\tilde{\cF}_{\bnu}$ is smooth, irreducible and the first projection $\pi_{\bnu}:\tilde{\cF}_{\bnu}\rightarrow \rE_{V_{\nu}}$ is proper and $\rG_{V_{\nu}}$-equivariant. 
\end{lemma}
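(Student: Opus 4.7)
The plan is to exhibit $\tilde{\cF}_{\bnu}$ as a vector bundle over a classical partial flag variety of $V_\nu$, from which smoothness, irreducibility, and properness of $\pi_{\bnu}$ all follow routinely.

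First, let $\cF_{\bnu}$ denote the variety parametrizing $I$-graded flags of type $\bnu$ in $V_\nu$, with no stability condition imposed. Since each $\nu^l$ is of the form $n_l i_l$ for a single vertex $i_l\in I$, the variety $\cF_{\bnu}$ decomposes as a product over $i\in I$ of ordinary partial flag varieties of the $k$-vector spaces $(V_\nu)_i$ (each obtained from the subsequence of $\bnu$ supported at the vertex $i$). Each such factor is smooth, irreducible, and projective, and these properties are preserved by finite products, so $\cF_{\bnu}$ is smooth, irreducible, and projective.

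Next I would study the second projection
\[
p:\tilde{\cF}_{\bnu}\longrightarrow \cF_{\bnu},\qquad (x,f)\longmapsto f.
\]
For a fixed flag $f=(V_\nu=V^0\supset V^1\supset\cdots\supset V^m=0)$, the fibre $p^{-1}(f)$ is the linear subspace of $\rE_{V_\nu}$ consisting of those $x=(x_h)_{h\in H}$ satisfying $x_h(V^l_{h'})\subset V^l_{h''}$ for all $l$ and all $h$. These stability conditions are linear in $x$, and because the graded dimensions of the successive quotients of $f$ depend only on $\bnu$, the fibre dimension is constant. Using the transitivity of the $\rG_{V_\nu}$-action on $\cF_{\bnu}$ together with $\rG_{V_\nu}$-homogeneity, one concludes that $p$ is in fact a $\rG_{V_\nu}$-equivariant algebraic vector bundle. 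Hence $\tilde{\cF}_{\bnu}$, as the total space of a vector bundle over the smooth irreducible base $\cF_{\bnu}$, is itself smooth and irreducible.

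For properness of $\pi_{\bnu}$, observe that the stability conditions cut out $\tilde{\cF}_{\bnu}$ as a closed subvariety of $\rE_{V_\nu}\times \cF_{\bnu}$, and that $\pi_{\bnu}$ factors as this closed immersion followed by the first projection $\rE_{V_\nu}\times \cF_{\bnu}\to \rE_{V_\nu}$. Since $\cF_{\bnu}$ is projective, the latter projection is proper, and a composition of proper maps is proper. Finally, the $\rG_{V_\nu}$-equivariance of $\pi_{\bnu}$ is immediate from the definition $g.(x,f)=(g.x,g.f)$. There is no genuine obstacle here; the only point requiring a moment's care is the local triviality of $p$, handled by the standard homogeneity argument.
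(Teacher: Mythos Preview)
Your argument is correct and is precisely the standard proof from \cite[Section 9.1.3]{Lusztig-1993}: realize $\tilde{\cF}_{\bnu}$ as a vector bundle over the projective flag variety $\cF_{\bnu}$ via the second projection, and factor $\pi_{\bnu}$ through the closed embedding $\tilde{\cF}_{\bnu}\hookrightarrow \rE_{V_\nu}\times\cF_{\bnu}$. The paper itself gives no proof at all --- it simply records the statement with a citation to Lusztig --- so there is nothing further to compare.
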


By decomposition theorem, see Corollary \ref{semisimple-*!}, the complex $L_{\bnu}=(\pi_{\bnu})_!(\overline{\bbQ}_l)[d(\bnu)](\frac{d(\bnu)}{2})$ is a $\rG_{V_{\nu}}$-equivariant semisimple complex on $\rE_{V_{\nu}}$, where $d(\bnu)$ is the dimension of $\tilde{\cF}_{\bnu}$.

We denote by $\cP_{V_{\nu}}$ the subcategory of $\cD^{b,ss}_{\rG_{V_{\nu}},m}(\rE_{V_{\nu}})$ consisting of simple perverse sheaves $L$ such that $L[n]$ is a direct summand of $L_{\bnu}$ for some $n\in \bbZ$ and $\bnu\in \mathcal{V}_\nu$. We denote by $\cQ_{V_{\nu}}$ the subcategory of $\cD^{b,ss}_{\rG_{V_{\nu}},m}(\rE_{V_{\nu}})$ consisting of finite direct sum of complexes of the form $L[n]$ for various $L\in \cP_{V_{\nu}}$ and various $n\in \bbZ$. We denote by $\cK_{\nu}$ the Grothendieck group of $\cQ_{V_{\nu}}$, and define a $\bbZ[v,v^-1]$-module structure on $\cK=\bigoplus_{\nu\in\bbN I}\cK_{\nu}$ via $v.[L]=[L[1](\frac{1}{2})]$.

\begin{lemma}[\cite{Lusztig-1993}, Section 9.2.7, 9.2.11]\label{ind,res}
Given $\nu=\nu'+\nu''\in \bbN I$, for any $\bnu\in \mathcal{V}_\nu,\bnu'\in \mathcal{V}_{\nu'},\bnu''\in \mathcal{V}_{\nu''}$, we have 
\begin{align*}
&\Ind^{\nu}_{\nu',\nu'}(L_{\bnu'}\boxtimes L_{\bnu''})=L_{\bnu'\bnu''},\\
&\Res^{\nu}_{\nu',\nu'}(L_{\bnu})=\bigoplus L_{\boldsymbol{\tau}}\boxtimes L_{\boldsymbol{\omega}}[M'(\boldsymbol{\tau},\boldsymbol{\omega})](\frac{M'(\boldsymbol{\tau},\boldsymbol{\omega})}{2}),
\end{align*}
where the direct sum in the second formula is taken over $(\boldsymbol{\tau},\boldsymbol{\omega})\in \mathcal{V}_{\nu'}\times \mathcal{V}_{\nu''}$ satisfying $\nu^l=\tau^l+\omega^l$ for all $l$, and $M'(\boldsymbol{\tau},\boldsymbol{\omega})$ can be found in \cite[Section 9.2.11]{Lusztig-1993}. 
\end{lemma}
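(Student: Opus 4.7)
The first formula is a base-change identity realising both sides as the proper pushforward of the constant sheaf along a flag-variety projection. Writing $L_{\bnu'}\boxtimes L_{\bnu''}=(\pi_{\bnu'}\times\pi_{\bnu''})_!\overline{\bbQ}_l[d(\bnu')+d(\bnu'')](\frac{d(\bnu')+d(\bnu'')}{2})$, I form the evident incidence varieties $Z''=\tilde{\cF}_{\bnu'\bnu''}\times_{\rE_{V_\nu}}E''^{\nu}_{\nu',\nu''}$ and $Z'=Z''\times_{E''^{\nu}_{\nu',\nu''}}E'^{\nu}_{\nu',\nu''}$ sitting above the induction diagram. The key combinatorial input is that a flag of type $\bnu'\bnu''$ in $V_\nu$ is the same data as the intermediate subspace $V^{m'}$ of dimension vector $\nu''$ together with a refining flag of type $\bnu''$ on $V^{m'}$ and a flag of type $\bnu'$ on $V_\nu/V^{m'}$; trivialising by $\rho_1,\rho_2$ identifies the natural map $Z'\to \tilde{\cF}_{\bnu'}\times\tilde{\cF}_{\bnu''}$ as a smooth base change of ${p_1}^{\nu}_{\nu',\nu''}$, the map $Z'\to Z''$ as a $\rG_{V_{\nu'}}\times\rG_{V_{\nu''}}$-principal bundle parallel to ${p_2}^{\nu}_{\nu',\nu''}$, and $Z''\to \tilde{\cF}_{\bnu'\bnu''}$ as an isomorphism whose composite with the projection to $\rE_{V_\nu}$ recovers $\pi_{\bnu'\bnu''}$. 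Proper base change for ${p_3}^{\nu}_{\nu',\nu''}$, smooth base change for ${p_1}^{\nu}_{\nu',\nu''}$, and Proposition~\ref{Lusztig-principal} for ${p_2}^{\nu}_{\nu',\nu''}$ then give $\Ind^{\nu}_{\nu',\nu''}(L_{\bnu'}\boxtimes L_{\bnu''})\simeq (\pi_{\bnu'\bnu''})_!\overline{\bbQ}_l$ with total shift $d(\bnu')+d(\bnu'')+d_1-d_2$; a direct dimension count shows this equals $d(\bnu'\bnu'')$, producing $L_{\bnu'\bnu''}$.

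For the second formula, my plan is to apply proper base change to the Cartesian square formed by the proper map $\pi_{\bnu}$ and the closed inclusion $\iota^{\nu}_{\nu',\nu''}$, reducing $(\iota^{\nu}_{\nu',\nu''})^*L_{\bnu}$ to the proper pushforward of $\overline{\bbQ}_l$ along $\pi'_{\bnu}:\pi_{\bnu}^{-1}(F^{\nu}_{\nu',\nu''})\to F^{\nu}_{\nu',\nu''}$, with the original shift preserved. I then stratify $\pi_{\bnu}^{-1}(F^{\nu}_{\nu',\nu''})$ by \emph{relative position} of the flag with respect to $W^{\nu''}$: for each pair $(\boldsymbol{\tau},\boldsymbol{\omega})\in\mathcal{V}_{\nu'}\times\mathcal{V}_{\nu''}$ satisfying $\tau^l+\omega^l=\nu^l$ for all $l$, let $Z_{\boldsymbol{\tau},\boldsymbol{\omega}}$ be the locally closed locus of $(x,f)$ for which the induced quotient flag on $V_\nu/W^{\nu''}$ has type $\boldsymbol{\tau}$ and the induced subflag on $W^{\nu''}$ has type $\boldsymbol{\omega}$. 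Composing with $\kappa^{\nu}_{\nu',\nu''}$, the restricted map $Z_{\boldsymbol{\tau},\boldsymbol{\omega}}\to\rE_{V_{\nu'}}\times\rE_{V_{\nu''}}$ factors as an iterated affine bundle over $\tilde{\cF}_{\boldsymbol{\tau}}\times\tilde{\cF}_{\boldsymbol{\omega}}$ followed by $\pi_{\boldsymbol{\tau}}\times\pi_{\boldsymbol{\omega}}$, the affine-bundle fibres encoding the choices of lifts of the $\tau^l$-parts in $V_\nu$ together with the off-diagonal Hom-data preserved by $x$. Each stratum therefore contributes a shift of $L_{\boldsymbol{\tau}}\boxtimes L_{\boldsymbol{\omega}}$ to $(\kappa^{\nu}_{\nu',\nu''})_!(\pi'_{\bnu})_!\overline{\bbQ}_l$.

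To upgrade the resulting stratum filtration to an actual direct sum, I would invoke purity: $L_{\bnu}$ is pure (proper pushforward of a pure sheaf), Proposition~\ref{res-hyperbolic} exhibits the core of $\Res^{\nu}_{\nu',\nu''}$ as a hyperbolic localisation, and Theorem~\ref{hyperbolic} together with Theorem~\ref{BBD} then force $\Res^{\nu}_{\nu',\nu''}(L_{\bnu})$ to be semisimple. A semisimple complex whose associated graded under any filtration consists of semisimple subquotients splits canonically as their direct sum, yielding the claimed decomposition. The step I expect to be the main obstacle is the precise bookkeeping of the shift $M'(\boldsymbol{\tau},\boldsymbol{\omega})$: it is the sum of the dimensional shifts from the affine-bundle contributions, the difference $d(\bnu)-d(\boldsymbol{\tau})-d(\boldsymbol{\omega})$, and the twist $-\langle\nu',\nu''\rangle$ built into the definition of $\Res$, and matching this with the combinatorial formula in \cite[Section 9.2.11]{Lusztig-1993} requires careful accounting of the Euler-pairing contributions from quiver edges joining the $\tau^l$- and $\omega^{l'}$-parts of the refined flag.
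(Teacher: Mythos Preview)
The paper does not supply its own proof of this lemma; it simply cites \cite[9.2.7, 9.2.11]{Lusztig-1993}. Your proposal is essentially a reconstruction of Lusztig's argument, and both parts are on the right track: the induction formula is exactly the base-change/principal-bundle computation you describe, and for the restriction formula the stratification of $\pi_{\bnu}^{-1}(F^{\nu}_{\nu',\nu''})$ by relative position of the flag with respect to $W^{\nu''}$, with each stratum an iterated affine fibration over $\tilde{\cF}_{\boldsymbol{\tau}}\times\tilde{\cF}_{\boldsymbol{\omega}}$, is the standard mechanism.

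One point deserves sharpening. Your sentence ``A semisimple complex whose associated graded under any filtration consists of semisimple subquotients splits canonically as their direct sum'' is not correct as stated: semisimplicity of the three terms of a distinguished triangle does not by itself force splitting. What actually makes the filtration split is the \emph{weight} argument, exactly parallel to the one the paper carries out in Lemma~\ref{inductive}: the total complex $\Res^{\nu}_{\nu',\nu''}(L_{\bnu})$ is pure (by Theorem~\ref{hyperbolic} via Proposition~\ref{res-hyperbolic}), each stratum contribution $L_{\boldsymbol{\tau}}\boxtimes L_{\boldsymbol{\omega}}[\cdots](\cdots)$ is pure of the \emph{same} weight, and therefore the connecting morphisms $\lp H^{s-1}(\text{closed part})\to\lp H^{s}(\text{open part})$ in the long exact sequence of perverse cohomology go between pure perverse sheaves of weights differing by one, hence vanish. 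This breaks the long exact sequence into split short exact sequences of semisimple perverse sheaves. You have all the ingredients listed; just be explicit that the vanishing of connecting maps comes from a weight mismatch, not from semisimplicity alone. (Lusztig's 1993 proof predates Braden's theorem, so his purity input is phrased differently, but the logical structure is the same.)

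Your caveat about the bookkeeping of $M'(\boldsymbol{\tau},\boldsymbol{\omega})$ is well placed: that is indeed where the work lies, and it is purely a dimension count with no further conceptual input.
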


\begin{lemma}[\cite{Lusztig-1993}, Proposition 12.6.3]\label{generator}
As a $\bbZ[v,v^-1]$-module, $\cK$ is generated by $L_{\bnu}$ for all $\bnu\in \mathcal{V}_{\nu},\nu\in \bbN I$.
\end{lemma}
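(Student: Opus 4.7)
The plan is to work one dimension vector at a time: it suffices to show that for each fixed $\nu\in\bbN I$ the classes $\{[L_\bnu]\}_{\bnu\in\mathcal{V}_\nu}$ generate $\cK_\nu$ as a $\bbZ[v,v^{-1}]$-module. By construction $\cQ_{V_\nu}$ is the additive hull of shifts and Tate twists of the simple perverse sheaves in $\cP_{V_\nu}$, and the identification $v.[L]=[L[1](\tfrac{1}{2})]$ shows that $\cK_\nu$ is freely generated as a $\bbZ[v,v^{-1}]$-module by the symbols $\{[L]:L\in\cP_{V_\nu}\}$. The task thus reduces to expressing each such $[L]$ as a $\bbZ[v,v^{-1}]$-linear combination of $\{[L_\bnu]\}_{\bnu\in\mathcal{V}_\nu}$.

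First I would invoke the decomposition theorem (Corollary~\ref{semisimple-*!}(b)) applied to the proper $\rG_{V_\nu}$-equivariant map $\pi_\bnu$. Since the constant sheaf on $\tilde{\cF}_\bnu$ is pure, $L_\bnu$ is pure (Proposition~\ref{pure-*!}(e)) and decomposes as
\[
L_\bnu \simeq \bigoplus_{L\in\cP_{V_\nu}}\bigoplus_{n\in\bbZ} L[n](\tfrac{n}{2})^{\oplus m(L,\bnu,n)}.
\]
Setting $\pi_L^\bnu(v)=\sum_n m(L,\bnu,n)\,v^n\in\bbN[v,v^{-1}]$, we get $[L_\bnu]=\sum_L \pi_L^\bnu(v)[L]$ in $\cK_\nu$, and by the very definition of $\cP_{V_\nu}$ each $L\in\cP_{V_\nu}$ appears with $\pi_L^\bnu\neq 0$ for at least one $\bnu$.

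To invert this expansion I would order $\cP_{V_\nu}$ by support. Since each $L\in\cP_{V_\nu}$ is $\rG_{V_\nu}$-equivariant, its support is the closure $\overline{\cO_L}$ of a unique $\rG_{V_\nu}$-orbit $\cO_L\subset\rE_{V_\nu}$; declare $L\preceq L'$ if $\cO_L\subseteq\overline{\cO_{L'}}$. For each $L$ I would choose $\bnu(L)$ corresponding to a composition series of a representative of $\cO_L$, so that the image of $\pi_{\bnu(L)}$ equals $\overline{\cO_L}$ and the generic fibre over $\cO_L$ is a single point. The purity of $L_{\bnu(L)}$ together with Verdier self-duality then forces $\pi_L^{\bnu(L)}(v)=1$, while every other summand $L'$ satisfies $\cO_{L'}\subsetneq\overline{\cO_L}$, giving the triangular expansion
\[
[L_{\bnu(L)}]=[L]+\sum_{L'\prec L}\pi_{L'}^{\bnu(L)}(v)\,[L'].
\]
Noetherian induction on closures of orbits then expresses every $[L]$ in the $\bbZ[v,v^{-1}]$-span of $\{[L_{\bnu(L')}]\}_{L'\preceq L}$, completing the argument.

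The main obstacle is the triangularity established in the previous paragraph: that only $L'$ with $\cO_{L'}\subseteq\overline{\cO_L}$ appear in the decomposition of $L_{\bnu(L)}$ and that the diagonal multiplicity equals $1$. There is no purely formal reason for this, and one must exploit the geometry of the partial flag variety $\tilde{\cF}_{\bnu(L)}$ together with the semismallness of the restriction of $\pi_{\bnu(L)}$ over $\cO_L$. In Lusztig's treatment this is the core geometric input, and it is where the link with the canonical basis of $U_v^-$ is forged; any honest proof of the lemma essentially recovers that computation, or else bypasses it by appealing to the bijection between $\cP_{V_\nu}$ and Lusztig's combinatorial parametrization already built from these same $L_\bnu$.
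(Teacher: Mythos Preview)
The paper does not supply its own proof of this lemma; it is quoted from \cite{Lusztig-1993}, so the comparison is with Lusztig's argument there.

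Your triangularity scheme via orbit closures is only valid for Dynkin $Q$. The claim ``its support is the closure $\overline{\cO_L}$ of a unique $\rG_{V_\nu}$-orbit $\cO_L$'' fails as soon as $\rG_{V_\nu}$ acts on $\rE_{V_\nu}$ with infinitely many orbits, which happens for every non-Dynkin quiver at large enough $\nu$. Already for the Kronecker quiver $i\rightrightarrows j$ with $\nu=i+j$, taking $\bnu=(i,j)$ makes $\pi_\bnu:\tilde{\cF}_\bnu\to\rE_{V_\nu}$ an isomorphism, so the constant perverse sheaf on $\rE_{V_\nu}\simeq k^2$ lies in $\cP_{V_\nu}$; its support is all of $k^2$, whereas the nonzero $\rG_{V_\nu}$-orbits are the punctured lines through the origin, indexed by $\bbP^1$, and no single orbit closure equals $k^2$. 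With no orbit $\cO_L$ attached to such an $L$, your partial order $\preceq$ is undefined, there is no ``representative of $\cO_L$'' from which to read off a composition series $\bnu(L)$, and the Noetherian induction cannot even be formulated. Your final paragraph flags the diagonal coefficient as the delicate point, but the obstruction is more basic: outside finite type the elements of $\cP_{V_\nu}$ are not parametrized by orbits at all, so the framework collapses rather than merely requiring a sharper estimate.

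Lusztig's argument does not use an orbit stratification. It is algebraic: by Lemma~\ref{ind,res} the $\bbZ[v,v^{-1}]$-span of the $[L_\bnu]$ is already a subring of $\cK$ under the induction product, and surjectivity onto $\cK$ is obtained by combining the comultiplication with a nondegenerate pairing on $\cK$ and inducting on $|\nu|$. If you want to salvage a support-based argument you must replace ``orbit closure'' by the genuine support of $L$ (an arbitrary irreducible $\rG_{V_\nu}$-stable closed subvariety) and then confront both the possible non-trivial equivariant local systems on its open stratum and the absence of any canonical $\bnu(L)$; this is substantially harder than what you have sketched.
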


All induction functors induce a multiplication $m:\cK\otimes_{\bbZ[v,v^{-1}]}\cK\rightarrow \cK$. It is associative, since it is easy to check the formula $m(m\otimes \textrm{Id})(a\otimes b\otimes c)=m(\textrm{Id}\otimes m)(a\otimes b\otimes c)$ if $a,b,c$ are of the form $[L_{\bnu}]$ by Lemma \ref{ind,res}, then the formula holds for any $a,b,c\in \cK$ by Lemma \ref{generator}. Similarly, restriction functors induce a comultiplication $\Delta:\cK\rightarrow \cK\otimes_{\bbZ[v,v^{-1}]}\cK$ which is coassociative. Similarly, we have the following Lemma.

\begin{lemma}[\cite{Lusztig-1993}, Lemma 13.1.5]\label{bialgebra}
The comultiplication $\Delta:\cK\rightarrow \cK\otimes_{\bbZ[v,v^{-1}]}\cK$ is an algebra homomorphism, where the multiplication on $\cK\otimes_{\bbZ[v,v^{-1}]}\cK$ is given by $(a\otimes b)(a'\otimes b')=v^{-(\nu,\nu')}m(a\otimes a')\otimes m(b\otimes b')$ for $b\in \cK_{\nu},a'\in \cK_{\nu'}$.
\end{lemma}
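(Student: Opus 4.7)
The plan is to prove the isomorphism by a direct geometric comparison of the two sides, avoiding the use of trace maps. Both $\Ind$ and $\Res$ are built out of smooth pullbacks, principal-bundle descents, vector-bundle pushforwards, and proper pushforwards, so the left hand side can be expanded into a single push-pull along a large diagram, and each summand on the right admits a parallel such presentation.

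First I would compute the left hand side. Starting with $A\boxtimes B$, the induction functor lifts through $({p_2}^{\gamma}_{\alpha,\beta})_\flat({p_1}^{\gamma}_{\alpha,\beta})^{*}$ to $E''^{\gamma}_{\alpha,\beta}$ and then pushes forward by $({p_3}^{\gamma}_{\alpha,\beta})_!$; the restriction then applies $(\iota^{\gamma}_{\alpha',\beta'})^{*}$ followed by $(\kappa^{\gamma}_{\alpha',\beta'})_!$. Because $p_3$ is proper, proper base change lets me replace $(\iota)^{*}(p_3)_!$ by $(\tilde p_3)_!(\tilde\iota)^{*}$, where the tildes refer to the fibre product $Z=(p_3)^{-1}(F^{\gamma}_{\alpha',\beta'})$. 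Concretely, $Z$ is the variety of triples $(x,W)$ with $x\in\rE_{V_\gamma}$ stabilising both the fixed subspace $W^{\beta'}$ and an $I$-graded subspace $W\subset V_\gamma$ of dimension vector $\beta$. I would then stratify $Z=\bigsqcup_{\lambda\in\cN}Z_\lambda$ according to the relative position $\lambda=(\alpha_1,\alpha_2,\beta_1,\beta_2)$ of $W$ with respect to $W^{\beta'}$, defined by $\beta_2=\dim(W\cap W^{\beta'})$ and $\alpha_1=\dim V_\gamma/(W+W^{\beta'})$, following the scheme of Section~4.3--4.7 of \cite{Lusztig-1991}.

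Next I would analyse the right hand side one $\lambda$ at a time. For each $\lambda\in\cN$, I construct an auxiliary variety of \emph{cross flags} carrying the double filtration $W\cap W^{\beta'}\subset W,W^{\beta'}\subset W+W^{\beta'}\subset V_\gamma$, and realise the composite $(\Ind^{\alpha'}_{\alpha_1,\beta_1}\boxtimes\Ind^{\beta'}_{\alpha_2,\beta_2})(\tau_\lambda)_!((\Res^{\alpha}_{\alpha_1,\alpha_2}A)\boxtimes(\Res^{\beta}_{\beta_1,\beta_2}B))$ as a push-pull through it: the inner restrictions produce the four subquotients indexed by $\lambda$, the twist $\tau_\lambda$ reshuffles them into the row/column arrangement dictated by the cross diagram, and the outer inductions recompose them on $\rE_{V_{\alpha'}}\times\rE_{V_{\beta'}}$. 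Comparison with the stratum $Z_\lambda$ on the left gives an isomorphism of the $\lambda$-contribution on each side; the shift $[-(\alpha_2,\beta_1)](-(\alpha_2,\beta_1)/2)$ falls out by adding up the fibre dimensions of $p_1,p_2,\kappa$ on that stratum with the normalising shifts in the definitions of $\Ind$ and $\Res$.

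The main obstacle is the final assembly. A locally closed stratification of $Z$ \emph{a priori} gives only a spectral sequence, not a direct sum, so one must show that no nontrivial extensions occur between the $\lambda$-contributions for different $\lambda$. My strategy, in the spirit of the homological accounting behind \cite{Green-1995}, is to proceed by purity: smooth pullback, proper pushforward, principal-bundle descent, and the hyperbolic localisation realisation of $\Res$ from Proposition~\ref{res-hyperbolic} all preserve purity by Proposition~\ref{pure-*!}, Corollary~\ref{principal} and Theorem~\ref{hyperbolic}, so each $\lambda$-contribution, and also the left hand side as a whole, is pure of the same weight. By Theorem~\ref{BBD} such pure complexes are semisimple, and semisimple complexes of the same weight admit no nontrivial extensions; one can therefore induct on the closure order of the strata, peeling off the open stratum at each step and splitting the associated distinguished triangle via a weight argument. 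This is the sheaf-theoretic counterpart of the bookkeeping underlying Green's original identity.
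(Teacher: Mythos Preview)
Your proposal is correct in substance but targets a stronger statement than the one asked. What you have outlined is the paper's proof of Theorem~\ref{main}, the sheaf-level isomorphism for arbitrary mixed semisimple $A,B$. Lemma~\ref{bialgebra} is only the Grothendieck-group statement restricted to Lusztig's subcategories $\cQ_{V_\nu}$, and the paper does not prove it at all: it is cited from \cite[Lemma~13.1.5]{Lusztig-1993}. The surrounding text (``Similarly, we have the following Lemma'') signals that the intended argument is the same as for associativity and coassociativity just above: verify the bialgebra identity on the generators $[L_{\bnu}]$ using the explicit formulas of Lemma~\ref{ind,res}, and conclude by Lemma~\ref{generator} since these classes generate $\cK$ as a $\bbZ[v,v^{-1}]$-module. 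That is a short combinatorial check and is exactly Lusztig's proof; the Remark after the lemma then records that this is equivalent to Theorem~\ref{main} in the special case $A\in\cQ_{V_\alpha}$, $B\in\cQ_{V_\beta}$.

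The geometric architecture you describe---base change to $Z$, stratification by relative position $\lambda$, cross-flag comparison with the right-hand side, and purity-based splitting of the stratification triangles---is precisely the structure of the paper's Section~3 proof of Theorem~\ref{main}. One point to flag: to obtain purity of each left-hand $\lambda$-piece, the paper does not rely only on Proposition~\ref{res-hyperbolic} (hyperbolic localisation on $\rE_{V_\gamma}$) but establishes a \emph{new} hyperbolic localisation on the incidence variety $E''^{\gamma}_{\alpha,\beta}$ itself (Lemma~\ref{hyperbolic-2}), whence Corollary~\ref{lambda-pure}. Your sketch appears instead to deduce purity of the left $\lambda$-piece from the stratum-wise comparison with the right-hand summand, which is pure since $\Ind$ and $\Res$ preserve purity; that also works, provided the comparison isomorphism on each stratum is established \emph{before} the splitting step, but this logical order should be made explicit. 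In any case, for the lemma as stated the generator-checking route is far simpler and is the one the paper (following Lusztig) actually invokes.
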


\begin{remark}
(a) By Lemma \ref{bialgebra}, $\cK$ is a $\bbZ[v,v^{-1}]$-bialgebra. Moreover, the $\bbQ(v)$-algebra $\bbQ(v)\otimes_{\bbZ[v,v^{-1}]}\cK$ is isomorphic to $U_v^-$ as bialgebras, see \cite[Theorem 13.2.11]{Lusztig-1993}.\\
(b) Lemma \ref{bialgebra} is equivalent to the fact that Theorem \ref{main} holds for $A\in \cQ_{V_\alpha},B\in \cQ_{V_\beta}$.
\end{remark}

\section{Main theorem and proof}\

In this section, we prove the main theorem.

For any $\gamma=\alpha+\beta=\alpha'+\beta'\in \bbN I$, we let $\cN$ be the set of quadruples $\lambda=(\alpha_1,\alpha_2,\beta_1,\beta_2)\in (\bbN I)^4$ satisying $\alpha=\alpha_1+\alpha_2,\beta=\beta_1+\beta_2,\alpha'=\alpha_1+\beta_1,\beta'=\alpha_2+\beta_2$, and let 
\begin{align*}
\tau_\lambda:\rE_{V_{\alpha_1}}\times \rE_{V_{\alpha_2}}\times \rE_{V_{\beta_1}}\times \rE_{V_{\beta_2}}&\xrightarrow{\simeq}\rE_{V_{\alpha_1}}\times \rE_{V_{\beta_1}}\times \rE_{V_{\alpha_2}}\times \rE_{V_{\beta_2}}\\
(x_{\alpha_1},x_{\alpha_2},x_{\beta_1},x_{\beta_2})&\mapsto (x_{\alpha_1},x_{\beta_1},x_{\alpha_2},x_{\beta_2})
\end{align*}
be the isomorphism switching the second coordinate and the third coordinate.

In order to define functors $\Res^{\gamma}_{\alpha',\beta'},\Res^{\alpha}_{\alpha_1,\alpha_2}$ and $\Res^{\beta}_{\beta_1,\beta_2}$, we need to fix $I$-graded subspaces $W^{\beta'}\subset V_\gamma, W^{\alpha_2}\subset V_\alpha, W^{\beta_2}\subset V_\beta$ of dimension vector $\beta',\alpha_2,\beta_2$ respectively, and fix $I$-graded linear isomorphisms $\rho_1^{\beta'}:V_\gamma/W^{\beta'}\xrightarrow{\simeq}V_{\alpha'},\rho_2^{\beta'}:W^{\beta'}\xrightarrow{\simeq}V_{\beta'},\rho_1^{\alpha_2}:V_\alpha/W^{\alpha_2}\xrightarrow{\simeq}V_{\alpha_1},\rho_2^{\alpha_2}:W^{\alpha_2}\xrightarrow{\simeq}V_{\alpha_2},\rho_1^{\beta_2}:V_\beta/W^{\beta_2}\xrightarrow{\simeq}V_{\beta_1},\rho_2^{\beta_2}:W^{\beta_2}\xrightarrow{\simeq}V_{\beta_2}$.

\begin{theorem}\label{main}
For any $A\in \cD^{b,ss}_{\rG_{V_\alpha},m}(\rE_{V_\alpha}), B\in \cD^{b,ss}_{\rG_{V_\beta},m}(\rE_{V_\beta})$, we have
\begin{align*}
&\Res^{\gamma}_{\alpha',\beta'}\Ind^{\gamma}_{\alpha,\beta}(A\boxtimes B)\simeq \\&\bigoplus_{\lambda=(\alpha_1,\alpha_2,\beta_1,\beta_2)\in \cN}\!\!\!\!\!\!\!(\Ind^{\alpha'}_{\alpha_1,\beta_1}\!\boxtimes\! \Ind^{\beta'}_{\alpha_2,\beta_2})(\tau_\lambda)_!((\Res^{\alpha}_{\alpha_1,\alpha_2}A)\!\boxtimes\! (\Res^{\beta}_{\beta_1,\beta_2}B))[-(\alpha_2,\beta_1)](-\frac{(\alpha_2,\beta_1)}{2}).
\end{align*}
\end{theorem}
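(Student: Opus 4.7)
My plan is to expand each side via its defining correspondence diagram, identify the common underlying push-pull functor, and then promote an a priori filtration to a direct sum using purity. For the left hand side, I will first apply proper base change: since ${p_3}^\gamma_{\alpha,\beta}$ is proper and $\iota^\gamma_{\alpha',\beta'}$ is a closed immersion, the composition $(\iota^\gamma_{\alpha',\beta'})^*\,({p_3}^\gamma_{\alpha,\beta})_!$ is computed by the Cartesian square over the fiber product $Z$ whose points are pairs $(x,W)$ with $x\in F^\gamma_{\alpha',\beta'}$ (so that $W^{\beta'}$ is $x$-stable) and $W$ an $x$-stable $I$-graded subspace of $V_\gamma$ of dimension vector $\beta$. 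Composing further with the vector bundle $\kappa^\gamma_{\alpha',\beta'}$ and the maps ${p_2}^\gamma_{\alpha,\beta}$, ${p_1}^\gamma_{\alpha,\beta}$ exhibits $\Res^\gamma_{\alpha',\beta'}\Ind^\gamma_{\alpha,\beta}(A\boxtimes B)$ as a single push-pull of $A\boxtimes B$ along an extended correspondence, up to explicit shifts.

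Next I will stratify $Z$ by the combinatorial invariant $\lambda=(\alpha_1,\alpha_2,\beta_1,\beta_2)\in\cN$ that records the dimension vectors of $V_\gamma/(W+W^{\beta'})$, $(W+W^{\beta'})/W^{\beta'}$, $W^{\beta'}/(W\cap W^{\beta'})$ and $W\cap W^{\beta'}$ respectively. On each locally closed stratum $Z_\lambda$, the extended correspondence factors through the correspondences defining $\Ind^{\alpha'}_{\alpha_1,\beta_1}$, $\Ind^{\beta'}_{\alpha_2,\beta_2}$, $\Res^\alpha_{\alpha_1,\alpha_2}$ and $\Res^\beta_{\beta_1,\beta_2}$; an iterated application of proper base change then identifies the contribution from $Z_\lambda$ with $(\Ind^{\alpha'}_{\alpha_1,\beta_1}\boxtimes \Ind^{\beta'}_{\alpha_2,\beta_2})(\tau_\lambda)_!((\Res^\alpha_{\alpha_1,\alpha_2}A)\boxtimes (\Res^\beta_{\beta_1,\beta_2}B))$. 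The permutation $\tau_\lambda$ enters because the four subquotients of $V_\gamma$ cut out by $(W,W^{\beta'})$ are grouped along the $\alpha$/$\beta$ axis by the restrictions but along the $\alpha'$/$\beta'$ axis by the subsequent inductions; a dimension count for $Z_\lambda$ inside $Z$ simplifies, via the symmetric Euler form, to $(\alpha_2,\beta_1)$, thereby accounting for the shift $[-(\alpha_2,\beta_1)](-\tfrac{(\alpha_2,\beta_1)}{2})$ on the right hand side.

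The stratification by itself furnishes only a filtration with the desired graded pieces; to promote this to the claimed direct sum I will invoke Proposition \ref{res-hyperbolic}, which realises $\Res^\gamma_{\alpha',\beta'}$ as a hyperbolic localization functor, together with Theorem \ref{hyperbolic} of Braden. Since $\Ind^\gamma_{\alpha,\beta}(A\boxtimes B)$ is $\rG_{V_\gamma}$-equivariant, hence weakly $k^*$-equivariant for the one-parameter subgroup $\zeta$ producing the hyperbolic structure, Braden's theorem gives that $\Res^\gamma_{\alpha',\beta'}\Ind^\gamma_{\alpha,\beta}(A\boxtimes B)$ is pure semisimple of a definite weight. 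A parallel weight analysis, using Proposition \ref{pure-*!}, Corollary \ref{semisimple-*!}, Corollary \ref{principal} and Proposition \ref{res-hyperbolic} applied factor by factor on the right hand side, shows each summand there is pure semisimple of the matching weight. By Theorem \ref{BBD}, both sides are then pure semisimple, and the extension problems in the stratification filtration split, yielding the claimed direct sum decomposition.

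The principal obstacle is this final splitting step. In Lusztig's proof of \cite[Lemma~13.1.5]{Lusztig-1993} one reduces to the generators $L_{\bnu'}, L_{\bnu''}$ via Lemma \ref{generator} and then checks the formula by the explicit flag combinatorics of Lemma \ref{ind,res}; neither input is available for arbitrary semisimple mixed complexes, and the replacement furnished by Braden's hyperbolic localization demands that weak $k^*$-equivariance and purity be tracked consistently through every composition factor on both sides. A secondary, largely bookkeeping difficulty is the precise accounting of shifts and Tate twists: the contributions from ${p_1}$, ${p_2}$, $\kappa$, $\iota$ and from the compatibility $\gamma=\alpha+\beta=\alpha'+\beta'$ must recombine into exactly the single shift $[-(\alpha_2,\beta_1)](-\tfrac{(\alpha_2,\beta_1)}{2})$ on the right hand side, and identifying this shift as $(\alpha_2,\beta_1)$ in the symmetric Euler form is the final concrete step of the argument.
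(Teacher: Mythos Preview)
Your proposal is correct and follows the same architecture as the paper: apply proper base change to rewrite the left hand side as a push--pull along the fibre product $\tilde F$, stratify $\tilde F$ by the type $\lambda\in\cN$ of the pair $(W,W^{\beta'})$, identify the contribution of each stratum with the corresponding right-hand summand, and use purity to split the resulting filtration. Two points deserve comment. First, your identification step is more than ``iterated proper base change'': the paper carries it out in Section~3.3 by introducing an auxiliary variety $\cP_\lambda$ with a map $\varphi_\lambda:\cP_\lambda\to\cQ_\lambda$ and a smooth surjection $\psi_\lambda:\cP_\lambda\to\cO_\lambda$ with connected fibres, and then uses $(\psi_\lambda)_!(\psi_\lambda)^*\simeq[-2K_\lambda](-K_\lambda)$; this is where the shift finally collapses to $-(\alpha_2,\beta_1)$. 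Second, and more interestingly, you and the paper diverge on \emph{how} purity of the graded pieces is obtained. You first perform the identification and then read off purity from the right-hand summands via Proposition~\ref{res-hyperbolic} applied to $\Res^\alpha_{\alpha_1,\alpha_2}$ and $\Res^\beta_{\beta_1,\beta_2}$. The paper instead proves purity of each piece \emph{before} any identification, via a second hyperbolic localization (Lemma~\ref{hyperbolic-2}): the functor $f_!\tilde\iota^*$ on $E''^\gamma_{\alpha,\beta}$ is itself hyperbolic localization for a one-parameter subgroup of $Q^{\beta'}$. Your route is more economical for the theorem; the paper's route yields Lemma~\ref{hyperbolic-2} as an independent structural result. (Minor: in your description of the stratification, $(W+W^{\beta'})/W^{\beta'}$ has dimension vector $\beta_1$ and $W^{\beta'}/(W\cap W^{\beta'})$ has dimension vector $\alpha_2$, not the other way round; the swap is harmless for the final shift since the symmetric Euler form is symmetric.)
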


We only need to show the theorem when $A.B$ are simple perverse sheaves. In this case, we assume that $A$ and $B$ are pure of weight $\omega$ and $\omega'$ respectively.

\subsection{The left hand side}\

We draw the following diagram containing all data we will use, and we explain them later.
\begin{diagram}[midshaft,size=2em]
\rE_{V_\alpha}\times \rE_{V_\beta} &\lTo^{{p_1}^{\gamma}_{\alpha,\beta}} &E'^{\gamma}_{\alpha,\beta} &\rTo^{{p_2}^{\gamma}_{\alpha,\beta}} &E''^{\gamma}_{\alpha,\beta} &\hEq &E''^{\gamma}_{\alpha,\beta} &\rTo^{{p_3}^{\gamma}_{\alpha,\beta}} &\rE_{V_\gamma}\\
 & &\uInto^{\tilde{\iota}_\lambda} &\square&\uInto^{\tilde{\iota}''_\lambda}& &\uInto^{\tilde{\iota}} &\square &\uInto_{\iota^{\gamma}_{\alpha',\beta'}}\\
 & &\cQ_\lambda&\rTo^{\tilde{p_2}_\lambda}&\tilde{F}_\lambda&\rInto^{\tilde{\iota}'_\lambda} &\tilde{F} &\rTo^{\tilde{p_3}} &F^{\gamma}_{\alpha',\beta'}\\
 & &&&\dTo^{f_\lambda} & & & &\dTo_{\kappa^{\gamma}_{\alpha',\beta'}}\\
 & &&&{E''^{\alpha'}_{\alpha_1,\beta_1}\times E''^{\beta'}_{\alpha_2,\beta_2}} & &\rTo^{{p_3}_\lambda}  &&\rE_{V_{\alpha'}}\times \rE_{V_{\beta'}}
\end{diagram}

By definition, the left hand side of the formula is equal to 
$$(\kappa^{\gamma}_{\alpha',\beta'})_!(\iota^{\gamma}_{\alpha',\beta'})^*({p_3}^{\gamma}_{\alpha,\beta})_!({p_2}^{\gamma}_{\alpha,\beta})_\flat({p_1}^{\gamma}_{\alpha,\beta})^*(A\boxtimes B)[M](\frac{M}{2}),$$
where $M=\sum_{h\in H}\alpha_{h'}\beta_{h''}+\sum_{i\in I}\alpha_i\beta_i-\langle\alpha',\beta'\rangle$.

Let $$\tilde{F}=E''^\gamma_{\alpha,\beta}\times_{\rE_{V_\gamma}}F^\gamma_{\alpha',\beta'}$$ 
be the fibre product of ${p_3}^\gamma_{\alpha,\beta}:E''^\gamma_{\alpha,\beta}\rightarrow \rE_{V_\gamma}$ and $\iota^\gamma_{\alpha',\beta'}:F^\gamma_{\alpha',\beta'}\rightarrow  \rE_{V_\gamma}$, that is, there is a Cartesian diagram
\begin{diagram}[midshaft,size=2em]
E''^{\gamma}_{\alpha,\beta} &\rTo^{{p_3}^{\gamma}_{\alpha,\beta}} &\rE_{V_\gamma}\\
\uInto^{\tilde{\iota}} &\Box &\uInto_{\iota^{\gamma}_{\alpha',\beta'}}\\
\tilde{F} &\rTo^{\tilde{p_3}} &F^{\gamma}_{\alpha',\beta'}.
\end{diagram}
By base change, we have $(\iota^{\gamma}_{\alpha',\beta'})^*({p_3}^{\gamma}_{\alpha,\beta})_!\simeq (\tilde{p_3})_!\tilde{\iota}^*$, and so the left hand side of the formula is isomorphic to
$$(\kappa^{\gamma}_{\alpha',\beta'})_!(\tilde{p_3})_!(\tilde{\iota})^*({p_2}^{\gamma}_{\alpha,\beta})_\flat({p_1}^{\gamma}_{\alpha,\beta})^*(A\boxtimes B)[M](\frac{M}{2}).$$

Note that $\tilde{F}$ consists of pairs $(x,W)$, where $x\in \rE_{V_\gamma}$ and $W\subset V_\gamma$ is a $I$-graded subspace of dimension vector $\beta$ such that $W,W^{\beta'}$ are $x$-stable.

For each $\lambda=(\alpha_1,\alpha_2,\beta_1,\beta_2)\in \cN$, we define a locally closed subvariety $\tilde{F}_\lambda\subset \tilde{F}$ consisting of pairs $(x,W)\in \tilde{F}$ such that the dimension vector of $W\cap W^{\beta'}$ is $\beta_2$. Then we define a morphism
\begin{align*}
f_\lambda:&\tilde{F}_\lambda\rightarrow E''^{\alpha'}_{\alpha_1,\beta_1}\times E''^{\beta'}_{\alpha_2,\beta_2}\\
&(x,W)\mapsto ((\rho_{1*}^{\beta'}\overline{x}^{W^{\beta'}},\rho_1^{\beta'}(W/W\cap W^{\beta'})),(\rho_{2*}^{\beta'}x|_{W^{\beta'}},\rho_2^{\beta'}(W\cap W^{\beta'}))),
\end{align*}
where we regard $W/W\cap W^{\beta'}\simeq W+W^{\beta'}/W^{\beta'}$ as a $I$-graded subspace of $V_{\gamma}/W^{\beta'}$ and apply $\rho_1^{\beta'}: V_{\gamma}/W^{\beta'}\rightarrow V_{\alpha'}$ to obtain a $I$-graded subspace $\rho_1^{\beta'}(W/W\cap W^{\beta'})\subset V_{\alpha'}$ of dimension vector $\beta_1$ which is $\rho_{1*}^{\beta'}\overline{x}^{W^{\beta'}}$-stable.

\begin{lemma}\label{vector bundle}
The morphism $f_\lambda$ is a locally trivial vector bundle of rank 
$$L_\lambda=\sum_{h\in H}(\alpha_{1h'}\alpha_{2h''}+\alpha_{1h'}\beta_{2h''}+\beta_{1h'}\beta_{2h''})+\sum_{i\in I}\alpha_{2i}\beta_{1i}$$
such that the following diagram is commutative
\begin{diagram}[midshaft,size=2em]
\tilde{F}_\lambda &\rInto^{\tilde{\iota}'_\lambda} &\tilde{F}\\
\dTo^{f_\lambda} & &\dTo_{\kappa^{\gamma}_{\alpha',\beta'}\tilde{p_3}}\\
E''^{\alpha'}_{\alpha_1,\beta_1}\times E''^{\beta'}_{\alpha_2,\beta_2} &\rTo^{{p_3}_\lambda} &\rE_{V_{\alpha'}}\times \rE_{V_{\beta'}},
\end{diagram}
where ${p_3}_\lambda={p_3}^{\alpha'}_{\alpha_1,\beta_1}\times {p_3}^{\beta'}_{\alpha_2,\beta_2}$ and $\tilde{\iota}'_\lambda$ is the inclusion.
\end{lemma}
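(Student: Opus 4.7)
The plan is to realize $f_\lambda$ as the kernel of a fibrewise surjective morphism of vector bundles over the base $B := E''^{\alpha'}_{\alpha_1,\beta_1} \times E''^{\beta'}_{\alpha_2,\beta_2}$. First I would fix once and for all an $I$-graded splitting $V_\gamma = \tilde{W} \oplus W^{\beta'}$. For a point $((x',W'),(x'',W''))$ of $B$, set $W_1 := (\rho_1^{\beta'})^{-1}(W')$, identified with a subspace of $\tilde{W}$ via the splitting, and $W_2 := (\rho_2^{\beta'})^{-1}(W'') \subset W^{\beta'}$. Writing $x \in \rE_{V_\gamma}$ as a block matrix with respect to $\tilde{W} \oplus W^{\beta'}$, the $x$-stability of $W^{\beta'}$ forces the upper-right block to vanish, the two diagonal blocks are prescribed by $(x',x'')$ via $\rho_1^{\beta'},\rho_2^{\beta'}$, and the only remaining freedom in $x$ is the lower-left block $w = (w_h) \in \bigoplus_{h \in H}\Hom(\tilde{W}_{h'}, W^{\beta'}_{h''})$. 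On the other hand, any $W \subset V_\gamma$ with $W \cap W^{\beta'} = W_2$ and $(W+W^{\beta'})/W^{\beta'} = W_1$ is uniquely of the form $W_2 + \{(\tilde{v}, \psi(\tilde{v})) : \tilde{v} \in W_1\}$ for a unique class $\psi \in \bigoplus_{i \in I}\Hom(W_{1,i}, W^{\beta'}_i/W_{2,i})$. Thus the fibre of $f_\lambda$ over $((x',W'),(x'',W''))$ is parametrised by pairs $(w,\psi)$ in a prescribed vector space, subject only to the requirement that $W$ be $x$-stable.

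Next, I would transcribe the $x$-stability of $W$ into a linear condition on $(w,\psi)$. Stability on $W_2$ is automatic from $W''$ being $x''$-stable. Testing on a generator $(\tilde{v}, \psi(\tilde{v}))$ with $\tilde{v} \in W_{1,h'}$ and expanding $x_h$ in blocks, the $\tilde{W}$-component lies in $W_1$ because $W'$ is $x'$-stable, and matching the $W^{\beta'}$-component against $W_{h''}$ yields, after a short block-matrix computation,
\[ w_h|_{W_{1,h'}} \equiv \psi_{h''}\circ (\overline{x}^{W^{\beta'}})_h - (x|_{W^{\beta'}})_h \circ \psi_{h'} \pmod{W_{2,h''}} \]
for every $h \in H$. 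This is a homogeneous linear relation in $(w,\psi)$, taking values in $\bigoplus_h \Hom(W_{1,h'}, W^{\beta'}_{h''}/W_{2,h''})$.

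Finally, I would globalise over $B$. The subspaces $W_1$ and $W_2$ vary as tautological subbundles pulled back from Grassmannians of $V_{\alpha'}$ and $V_{\beta'}$, so the space of pairs $(w,\psi)$ forms a vector bundle $\mathcal{E}$ over $B$, and the constraint of the previous paragraph defines a morphism of vector bundles $\mathcal{E} \to \mathcal{E}'$ whose pointwise kernel is exactly the fibre of $f_\lambda$. This morphism is fibrewise surjective: given any target $\eta$, take $\psi = 0$ and any lift of $\eta$ for $w|_{W_1}$. Hence $\tilde{F}_\lambda$ sits inside $\mathcal{E}$ as a vector subbundle, so $f_\lambda$ is Zariski-locally trivial of constant rank $\dim\mathcal{E} - \dim\mathcal{E}'$; a short count, expanding $\alpha' = \alpha_1 + \beta_1$ and $\beta' = \alpha_2 + \beta_2$, identifies this rank with $L_\lambda$. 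The commutativity of the displayed square is tautological from the definitions, since both ${p_3}_\lambda \circ f_\lambda$ and $\kappa^{\gamma}_{\alpha',\beta'}\circ\tilde{p_3}\circ\tilde{\iota}'_\lambda$ send $(x,W)$ to $(\rho_{1*}^{\beta'}\overline{x}^{W^{\beta'}}, \rho_{2*}^{\beta'}x|_{W^{\beta'}})$. The main technical obstacle will be the block-matrix calculation that produces the displayed constraint with correct signs; once that is in hand, fibrewise surjectivity of $\mathcal{E}\to\mathcal{E}'$ and the rank count are routine, and conclude the proof.
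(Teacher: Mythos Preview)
Your proof is correct and follows essentially the same approach as the paper: fix a splitting $V_\gamma=\tilde W\oplus W^{\beta'}$, parametrise the fibre by pairs consisting of the off-diagonal block of $x$ and an $I$-graded map encoding $W$ as a graph over $W_1$ modulo $W_2$, then extract the linear $x$-stability constraint and count dimensions. The paper packages the dimension count via a short exact sequence $0\to\Gamma'_1\to\Gamma\to\Gamma'_2\to 0$ at each fibre (with $\Gamma'_1$ the space of $y$ mapping $W'$ into $W''$ and $\Gamma'_2$ the space of all $z$), whereas you globalise the same computation as the kernel of a fibrewise surjective bundle map $\mathcal E\to\mathcal E'$; this is the same argument in different clothing, with the mild bonus that your formulation makes the local triviality explicit.
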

\begin{proof}
By definition, it is clear that the diagram commutes.

We fix a $I$-graded direct sum decomposition $V_\gamma=\tilde{W}\oplus W^{\beta'}$ and fix a $I$-graded linear isomorphism $\rho:\tilde{W}\xrightarrow{\simeq} V_\gamma/W^{\beta'}$.

For each fixed $(x',W')\in E''^{\alpha'}_{\alpha_1,\beta_1}$ and $(x'',W'')\in E''^{\beta'}_{\alpha_2,\beta_2}$, recall that $x'\in \rE_{V_{\alpha'}},x''\in \rE_{V_{\beta'}}$ and $W'\subset V_{\alpha'}$ is a $x'$-stable $I$-graded subspace of dimension vector $\beta_1$, $W''\subset V_{\beta'}$ is a $x''$-stable $I$-graded subspace of dimension vector $\beta_2$. Let $\Gamma$ be the fibre of $f_\lambda$ at $((x',W'),(x'',W''))$.

Firstly, giving a $I$-graded subspace $W\subset V_{\gamma}$ of dimension vector $\beta$ such that the dimension vector of $W\cap W^{\beta'}$ is $\beta_2$ and such that $\rho_1^{\beta'}(W/W\cap W^{\beta'})=W',\rho_2^{\beta'}(W\cap W^{\beta'})=W''$ is the same as giving a $I$-graded linear map $z:W'\rightarrow V_{\beta'}/W''$. Indeed, the map $z$ corresponds to the space
$$W=\{v'+v''\in\tilde{W}\oplus W^{\beta'}\mid\rho_1^{\beta'}\rho(v')\in W',z\rho_1^{\beta'}\rho(v')=\rho_2^{\beta'}(v'')+W''\}.$$

Secondly, giving $x\in \rE_{V_\gamma}$ such that $W^{\beta'}$ is $x$-stable and $\rho_{1*}^{\beta'}\overline{x}^{W^{\beta'}}=x', \rho_{2*}^{\beta'}x|_{W^{\beta'}}=x''$ is the same as giving $y\in \bigoplus_{h\in H}\Hom_k((V_{\alpha'})_{h'},(V_{\beta'})_{h''})$. Indeed, the element $y$ corresponds to the element
$$\begin{pmatrix}
\rho_1^{\beta'}\rho &\\
 &\rho_2^{\beta'}\end{pmatrix}^{-1}\begin{pmatrix}
 x' &0\\
 y &x''\end{pmatrix}\begin{pmatrix}
\rho_1^{\beta'}\rho &\\
 &\rho_2^{\beta'}\end{pmatrix},$$
where we write elements of $\rE_{V_\gamma}$ as block matrices with respect to the decomposition $V_\gamma=\tilde{W}\oplus W^{\beta'}$.

Finally, under above correspondences, the condition $W$ is $x$-stable is the same as the condition
$$(z_{h''}x'_h-(\overline{x''}^{W''})_hz_{h'}-\pi_{h''} y_h)(W'_{h'})=0$$
for any $h\in H$, where $\pi_{h''}:(V_{\beta'})_{h''}\rightarrow (V_{\beta'}/W'')_{h''}$ is the quotient map.

Hence $\Gamma$ can be identified with the linear space $\Gamma'$ consisting of pairs $(z,y)$, where $y\in \bigoplus_{h\in H}\Hom_k((V_{\alpha'})_{h'},(V_{\beta'})_{h''})$ and $z:W'\rightarrow V_{\beta'}/W''$ is a $I$-graded linear map satisfying above condition. Let $\Gamma'_1$ be a subspace of $\bigoplus_{h\in H}\Hom_k((V_{\alpha'})_{h'},(V_{\beta'})_{h''})$ consisting of $y$ satisfying $y_h(W'_{h'})\subset W''_{h''}$ for any $h\in H$, and let $\Gamma'_2$ be the linear space consisting of all $I$-graded linear maps $z:W'\rightarrow V_{\beta'}/W''$. Then there is a short exact sequence
$$
0\rightarrow \Gamma'_1\xrightarrow{s} \Gamma\xrightarrow{t} \Gamma'_2\rightarrow 0,
$$
where $s(y)=(0,y)$ and $t(z,y)=z$. Therefore 
\begin{align*}
&\dim \Gamma=\dim \Gamma'=\dim \Gamma'_1+\dim \Gamma'_2\\
=&\sum_{h\in H}\dim W'_{h'}\dim W''_{h''}+\dim (V_{\alpha'}/W')_{h'}\dim (V_{\beta'})_{h''}\\
&+\sum_{i\in I}\dim W'_i\dim (V_{\beta'}/W'')_i\\
=&\sum_{h\in H}(\beta_{1h'}\beta_{2h''}+\alpha_{1h'}\beta'_{h''})+\sum_{i\in I}\alpha_{2i}\beta_{1i}\\
=&\sum_{h\in H}(\alpha_{1h'}\alpha_{2h''}+\alpha_{1h'}\beta_{2h''}+\beta_{1h'}\beta_{2h''})+\sum_{i\in I}\alpha_{2i}\beta_{1i}=L_\lambda,
\end{align*}
as desired.
\end{proof}

All these locally closed subvarieties $\tilde{F}_\lambda$ form a partition 
$$\tilde{F}=\bigsqcup_{\lambda\in \cN}\tilde{F}_\lambda.$$

For each $n\in \mathbb{Z}_{\geqslant 0}$, let $\tilde{F}_n$ be the disjoint union of all $\tilde{F}_\lambda$ which have dimension $n$, then these $\tilde{F}_\lambda$ are both open and closed in $\tilde{F}_n$. For $n\in \mathbb{Z}_{<0}$, let $\tilde{F}_n=\varnothing$ for convenience. 

Let $\cN_n=\{\lambda\in\cN\mid \tilde{F}_\lambda\subset \tilde{F}_n\}$ and $E''_n$ be the disjoint union of $E''^{\alpha'}_{\alpha_1,\beta_1}\times E''^{\beta'}_{\alpha_2,\beta_2}$ with $\lambda=(\alpha_1,\alpha_2,\beta_1,\beta_2)\in \cN_n$. Then there is a commutative diagram
\begin{diagram}[midshaft,size=2em]
\tilde{F}_n &\rInto^{\iota_n} &\tilde{F}\\
\dTo^{f_n} & &\dTo_{\kappa^{\gamma}_{\alpha',\beta'}\tilde{p_3}}\\
E''_n&\rTo^{{p_3}_n} &\rE_{V_{\alpha'}}\times \rE_{V_{\beta'}},
\end{diagram}
where $\iota_n,f_n$ and ${p_3}_n$ are assembled by $\tilde{\iota}'_\lambda,f_\lambda$ and ${p_3}_\lambda$ with $\lambda=(\alpha_1,\alpha_2,\beta_1,\beta_2)\in \cN_n$ respectively.

\begin{lemma}\label{n-case}
With the same notation as above, we have  
\begin{align*}
&(\kappa^{\gamma}_{\alpha',\beta'})_!(\tilde{p_3})_!(\iota_n)_!(\iota_n)^*(\tilde{\iota})^*({p_2}^{\gamma}_{\alpha,\beta})_\flat({p_1}^{\gamma}_{\alpha,\beta})^*(A\boxtimes B)\\
\simeq &\bigoplus_{\lambda=(\alpha_1,\alpha_2,\beta_1,\beta_2)\in \cN_n}({p_3}_\lambda)_!(f_\lambda)_!(\tilde{\iota}''_\lambda)^*({p_2}^{\gamma}_{\alpha,\beta})_\flat({p_1}^{\gamma}_{\alpha,\beta})^*(A\boxtimes B),
\end{align*}
where $\tilde{\iota}''_\lambda=\tilde{\iota}\tilde{\iota}'_\lambda:\tilde{F}_\lambda\hookrightarrow E''^{\gamma}_{\alpha,\beta}$ is the inclusion.
\end{lemma}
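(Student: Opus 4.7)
The plan is to use the fact that $\tilde{F}_n$ decomposes as a disjoint union of open-and-closed subvarieties indexed by $\cN_n$, and then to combine this with the commutative square from Lemma \ref{vector bundle}. Once this geometric decomposition is in hand, the statement reduces to a piece of formal six-functor manipulation.

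First I would factor the inclusion $\tilde{\iota}'_\lambda:\tilde{F}_\lambda\hookrightarrow \tilde{F}$ through $\tilde{F}_n$ as $\tilde{\iota}'_\lambda=\iota_n\circ j_\lambda$, where $j_\lambda:\tilde{F}_\lambda\hookrightarrow \tilde{F}_n$ is the inclusion, which is both open and closed by construction of $\tilde{F}_n$. For any $M\in\cD^b(\tilde{F}_n)$ the canonical identity
\begin{align*}
M\simeq\bigoplus_{\lambda\in\cN_n}(j_\lambda)_!(j_\lambda)^*M
\end{align*}
holds: each $(j_\lambda)_!=(j_\lambda)_*$ is fully faithful along an open-and-closed inclusion, the summands have mutually disjoint supports, and together they cover $\tilde{F}_n$. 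Applying this to $M=(\iota_n)^*(\tilde{\iota})^*K$ with $K=({p_2}^{\gamma}_{\alpha,\beta})_\flat({p_1}^{\gamma}_{\alpha,\beta})^*(A\boxtimes B)$, and then pushing forward by $(\iota_n)_!$, I obtain, using $\tilde{\iota}''_\lambda=\tilde{\iota}\circ \tilde{\iota}'_\lambda=\tilde{\iota}\circ \iota_n\circ j_\lambda$,
\begin{align*}
(\iota_n)_!(\iota_n)^*(\tilde{\iota})^*K\simeq \bigoplus_{\lambda\in\cN_n}(\tilde{\iota}'_\lambda)_!(\tilde{\iota}''_\lambda)^*K.
\end{align*}

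It then remains to apply $(\kappa^{\gamma}_{\alpha',\beta'})_!(\tilde{p_3})_!$ to both sides. The commutative square in Lemma \ref{vector bundle} gives $\kappa^{\gamma}_{\alpha',\beta'}\circ \tilde{p_3}\circ \tilde{\iota}'_\lambda={p_3}_\lambda\circ f_\lambda$, so functoriality of the $!$-pushforward yields $(\kappa^{\gamma}_{\alpha',\beta'})_!(\tilde{p_3})_!(\tilde{\iota}'_\lambda)_!\simeq ({p_3}_\lambda)_!(f_\lambda)_!$, and substituting this into the previous display produces exactly the formula claimed in the lemma.

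The only non-formal ingredient is the statement that each $\tilde{F}_\lambda$ with $\lambda\in\cN_n$ is open-and-closed in $\tilde{F}_n$, but this is built into the definition of $\tilde{F}_n$ as the disjoint union of the strata of a fixed dimension. Consequently I do not anticipate a genuine obstacle here; the lemma is essentially a bookkeeping step packaging the stratification of $\tilde{F}$ together with the commutative diagram of Lemma \ref{vector bundle}, and its role is to split the left-hand side of Theorem \ref{main} into contributions indexed by $\lambda\in\cN$ before the analysis of each stratum is carried out in the subsequent subsections.
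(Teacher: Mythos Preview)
Your proposal is correct and follows essentially the same route as the paper: both arguments rest on the fact that the $\tilde{F}_\lambda$ with $\lambda\in\cN_n$ are open and closed in $\tilde{F}_n$ (yielding the direct-sum splitting, which the paper phrases via a Mayer--Vietoris argument from Iversen) together with the commutative square of Lemma \ref{vector bundle} identifying $\kappa^{\gamma}_{\alpha',\beta'}\tilde{p_3}\tilde{\iota}'_\lambda$ with ${p_3}_\lambda f_\lambda$. Your write-up is if anything a bit more explicit in the six-functor bookkeeping, but there is no substantive difference.
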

\begin{proof}
We denote by $C=(\tilde{\iota})^*({p_2}^\gamma_{\alpha,\beta})_\flat({p_1}^\gamma_{\alpha,\beta})^*(A\boxtimes B), \theta_n=\kappa^\gamma_{\alpha',\beta'}\tilde{p_3}\iota_n={p_3}_nf_n:\tilde{F}_n\rightarrow \rE_{V_{\alpha'}}\times \rE_{V_{\beta'}}$ and $\theta_\lambda=\kappa^\gamma_{\alpha',\beta'}\tilde{p_3}\tilde{\iota}'_\lambda={p_3}_\lambda f_\lambda:\tilde{F}_\lambda\rightarrow \rE_{V_{\alpha'}}\times \rE_{V_{\beta'}}$ for any $\lambda\in \cN_n$. Note that all $\tilde{F}_\lambda$ are both open and closed in $\tilde{F}_n$ for $\lambda\in\cN_n$, by \uppercase\expandafter{\romannumeral7}.Theorem 1.4 and \uppercase\expandafter{\romannumeral3}.Mayer-Vietoris sequence 1.8 in \cite{Iversen-1986}, we have
\begin{align*}
(\theta_n)_!(\iota_n)^*(C)\simeq \bigoplus_{\lambda\in \cN_n}(\theta_\lambda)_!(\tilde{\iota}'_\lambda)^*(C),
\end{align*}
as desired.
\end{proof}

Recall that the group $Q^{\beta'}$ acts on $\bigsqcup_{\lambda\in \cN}E''^{\alpha'}_{\alpha_1,\beta_1}\times E''^{\beta'}_{\alpha_2,\beta_2}$ through the quotient $Q^{\beta'}/U^{\beta'}\simeq \rG_{V_{\alpha'}}\times \rG_{V_{\beta'}}$, where $U^{\beta'}$ is the unipotent radical which acts trivially on $\bigsqcup_{\lambda\in \cN}E''^{\alpha'}_{\alpha_1,\beta_1}\times E''^{\beta'}_{\alpha_2,\beta_2}$, by Proposition \ref{sub-quotient-equivariant}, there is an equivalence
$$\cD^b_{Q^{\beta'},m}(\bigsqcup_{\lambda\in \cN}E''^{\alpha'}_{\alpha_1,\beta_1}\times E''^{\beta'}_{\alpha_2,\beta_2})\xrightarrow{\simeq} \cD^b_{\rG_{V_{\alpha'}}\times \rG_{V_{\beta'}},m}(\bigsqcup_{\lambda\in \cN}E''^{\alpha'}_{\alpha_1,\beta_1}\times E''^{\beta'}_{\alpha_2,\beta_2}).$$

\begin{lemma}\label{hyperbolic-2}
The functor 
\begin{align*}
&\cD^b_{\rG_{V_\gamma},m}(E''^\gamma_{\alpha,\beta})\xrightarrow{\textrm{forget}}\cD^b_{Q^{\beta'},m}(E''^\gamma_{\alpha,\beta})\xrightarrow{(\tilde{\iota})^*}\cD^b_{Q^{\beta'},m}(\tilde{F})\\&\xrightarrow{f_!}\cD^b_{Q^{\beta'},m}(\bigsqcup_{\lambda\in \cN}E''^{\alpha'}_{\alpha_1,\beta_1}\times E''^{\beta'}_{\alpha_2,\beta_2})\xrightarrow{\simeq} \cD^b_{\rG_{V_{\alpha'}}\times \rG_{V_{\beta'}},m}(\bigsqcup_{\lambda\in \cN}E''^{\alpha'}_{\alpha_1,\beta_1}\times E''^{\beta'}_{\alpha_2,\beta_2}),
\end{align*}
sends pure complexes of weight $\eta$ to pure complexes of weight $\eta$, where the morphism $f:\tilde{F}\rightarrow \bigsqcup_{\lambda\in \cN}E''^{\alpha'}_{\alpha_1,\beta_1}\times E''^{\beta'}_{\alpha_2,\beta_2}$ is assembled by $f_\lambda$ with $\lambda\in \cN$.
\end{lemma}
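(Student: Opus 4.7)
The plan is to identify the composition $f_!\tilde{\iota}^*$, prior to applying the equivalence in the last step, as a hyperbolic localization functor on $E''^\gamma_{\alpha,\beta}$, in exact analogy with the proof of Proposition \ref{res-hyperbolic}. Once this is established, preservation of purity and weight follows at once from Theorem \ref{hyperbolic} together with the remark that hyperbolic localization preserves the weight of pure weakly equivariant complexes.

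Concretely, fix an $I$-graded splitting $V_\gamma = \tilde{W} \oplus W^{\beta'}$ together with an isomorphism $\rho: \tilde{W} \xrightarrow{\simeq} V_\gamma/W^{\beta'}$, and introduce the one-parameter subgroup $\zeta: k^* \hookrightarrow Q^{\beta'}$ defined by $t \mapsto \mathrm{id}_{\tilde{W}} \oplus (t \cdot \mathrm{id}_{W^{\beta'}})$. This induces a $k^*$-action on $E''^\gamma_{\alpha,\beta}$ via $t.(x,W) = (\zeta(t).x, \zeta(t)(W))$. A direct computation shows that a point $(x, W)$ is $k^*$-fixed iff $x$ is block-diagonal with respect to $V_\gamma = \tilde{W} \oplus W^{\beta'}$ and $W$ splits as $(W \cap \tilde{W}) \oplus (W \cap W^{\beta'})$; sorting by the dimension vector $\beta_2$ of $W \cap W^{\beta'}$ and applying $\rho, \rho_1^{\beta'}, \rho_2^{\beta'}$ identifies $(E''^\gamma_{\alpha,\beta})^{k^*}$ with $\bigsqcup_{\lambda \in \cN} E''^{\alpha'}_{\alpha_1,\beta_1} \times E''^{\beta'}_{\alpha_2,\beta_2}$. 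The limit $\lim_{t \to 0} \zeta(t).x$ exists iff the block $W^{\beta'} \to \tilde{W}$ of $x$ vanishes, i.e., iff $W^{\beta'}$ is $x$-stable, while $\lim_{t \to 0} \zeta(t)(W)$ always exists in the relevant Grassmannian and equals $\pi(W) \oplus (W \cap W^{\beta'})$, where $\pi: V_\gamma \to \tilde{W}$ is the projection. Consequently the attracting variety $(E''^\gamma_{\alpha,\beta})^+$ coincides with $\tilde{F}$, the inclusion $g^+$ with $\tilde{\iota}$, and the contraction map $\pi^+$ with $f$ under the above identifications.

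Any $K \in \cD^b_{\rG_{V_\gamma},m}(E''^\gamma_{\alpha,\beta})$, viewed through the forgetful functors to $Q^{\beta'}$- and then (via $\zeta$) $k^*$-equivariance, is weakly $k^*$-equivariant as noted in Section 2.2; by the alternative description $K^{!*} \simeq (\pi^+)_!(g^+)^*K$ recorded there, we obtain $f_!\tilde{\iota}^*(K) \simeq K^{!*}$, and Theorem \ref{hyperbolic} together with the ensuing remark yields that this is pure of weight $\eta$ whenever $K$ is. Finally, since $\tilde{\iota}$ is the inclusion of a $Q^{\beta'}$-stable subvariety and $f$ is $Q^{\beta'}$-equivariant (with $Q^{\beta'}$ acting on the target through the quotient $\rG_{V_{\alpha'}} \times \rG_{V_{\beta'}}$), Proposition \ref{Lusztig-principal}(a) gives $Q^{\beta'}$-equivariance of the output, and Proposition \ref{sub-quotient-equivariant}(b) (since $U^{\beta'}$ acts trivially on the target) upgrades this to a pure $\rG_{V_{\alpha'}} \times \rG_{V_{\beta'}}$-equivariant complex of weight $\eta$. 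The main technical point is the concrete identification of $\tilde{F}$ and $f$ with the attracting set and contraction map for this $k^*$-action; after that, the lemma reduces to Braden's theorem via the template of Proposition \ref{res-hyperbolic}.
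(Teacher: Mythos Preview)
Your proposal is correct and follows essentially the same approach as the paper: both identify $f_!\tilde{\iota}^*$ with the hyperbolic localization functor for the $k^*$-action on $E''^\gamma_{\alpha,\beta}$ induced by the one-parameter subgroup $\zeta:k^*\hookrightarrow Q^{\beta'}$, and then invoke Theorem~\ref{hyperbolic}. The only difference is presentational---the paper introduces the partition $E''^\gamma_{\alpha,\beta}=\bigsqcup_{\lambda}E''_\lambda$ and writes out the action in explicit coordinates $(x',x'',y,w,W',W'',z)\mapsto(x',x'',ty,t^{-1}w,W',W'',tz)$ on each stratum before assembling, whereas you argue globally by directly computing the fixed locus, attractor, and contraction map; the content is the same.
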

\begin{proof}
We claim that the functor 
$$\cD^b_{Q^{\beta'},m}(E''^\gamma_{\alpha,\beta})\xrightarrow{(\tilde{\iota})^*}\cD^b_{m}(\tilde{F})\xrightarrow{f_!}\cD^b_{m}(\bigsqcup_{\lambda\in \cN}E''^{\alpha'}_{\alpha_1,\beta_1}\times E''^{\beta'}_{\alpha_2,\beta_2})$$
is a hyperbolic localization functor. For any $\lambda=(\alpha_1,\alpha_2,\beta_1,\beta_2)\in \cN$, let $E''_\lambda$ be the locally closed subvariety of $E''^\gamma_{\alpha,\beta}$ consisting of $(x,W)$ such that the dimension vector of $W\cap W^{\beta'}$ is $\beta_2$, then all these closed subvarieties form a partition 
$$E''^\gamma_{\alpha,\beta}=\bigsqcup_{\lambda\in \cN}E''_\lambda.$$
We fix a $I$-graded direct sum decomposition $V_\gamma=\tilde{W}\oplus W^{\beta'}$ and fix a $I$-graded linear isomorphism $\rho:\tilde{W}\xrightarrow{\simeq}V_\gamma/W^{\beta'}$. By the proof of Proposition \ref{res-hyperbolic}, there is a bijection
\begin{align*}
\rE_{V_{\nu'}}\!\times\! \rE_{V_{\nu''}}&\!\times\! \bigoplus_{h\in H}\Hom_k((V_{\nu'})_{h'},(V_{\nu''})_{h''})\!\times\! \bigoplus_{h\in H}\Hom_k((V_{\nu''})_{h'},(V_{\nu'})_{h''})\simeq \rE_{V_\nu}\\
&(x',x'',y,w)\mapsto\begin{pmatrix}
\rho_1^{\beta'}\rho &\\
 &\rho_2^{\beta'}\end{pmatrix}^{-1}\begin{pmatrix}
 x' &w\\
 y &x''\end{pmatrix}\begin{pmatrix}
\rho_1^{\beta'}\rho &\\
 &\rho_2^{\beta'}\end{pmatrix}.
\end{align*}
For any $\lambda=(\alpha_1,\alpha_2,\beta_1,\beta_2)\in \cN$, let $Gr_\lambda$ be the subvariety of the $I$-graded Grassmannian $Gr_{\beta}(V_\gamma)$ consisting of $I$-graded subspaces $W\subset V_\gamma$ of dimension vector $\beta$ such that the dimension vector of $W\cap W^{\beta'}$ is $\beta_2$, then there is a partition 
$$Gr_{\beta}(V_\gamma)=\bigsqcup_{\lambda\in \cN}Gr_\lambda$$
and a vector bundle
\begin{align*}
Gr_\lambda&\rightarrow Gr_{\beta_1}(V_{\alpha'})\times Gr_{\beta_2}(V_{\beta'})\\
W&\mapsto (\rho_1^{\beta'}(W/W\cap W^{\beta'}),\rho_2^{\beta'}(W\cap W^{\beta'}))
\end{align*}
whose fibre at $(W',W'')\in Gr_{\beta_1}(V_{\alpha'})\times Gr_{\beta_2}(V_{\beta'})$ can be identified with the linear space of $I$-graded linear maps $z:W'\rightarrow V_{\beta'}/W''$. Indeed, the map $z$ corresponds to the space
$$W_{W',W'',z}=\{v'+v''\in \tilde{W}\oplus W^{\beta'}\mid \rho_1^{\beta'}\rho(v')\in W',z\rho_1^{\beta'}\rho(v')=\rho_2^{\beta'}(v'')+W''\}.$$
Under above correspondences, an element $(x,W)\in E''_\lambda\subset\rE_{V_{\gamma}}\times Gr_\lambda$ is identified with $(x',x'',y,w,W',W'',z)$ satisfying
\begin{align*}
&x'\rho_1^{\beta'}\rho(v')+w\rho_2^{\beta'}(v'')\in W',\\
&z(x'\rho_1^{\beta'}\rho(v')+w\rho_2^{\beta'}(v''))=y\rho_1^{\beta'}\rho(v')+x''\rho_2^{\beta'}(v'')+W''
\end{align*}
for any $v'+v''\in W_{W',W'',z}$. There is a one-parameter subgroup embedding $\zeta:k^*\hookrightarrow Q^{\beta'}$ given by $t\mapsto \textrm{Id}_{V_{\alpha'}}\bigoplus t\textrm{Id}_{V_{\beta'}}$ such that $k^*$ acts on $E''_\lambda$ via $\zeta$. More precisely, 
the $k^*$-action on $E''_\lambda$ is given by
$$t.(x',x'',y,w,W',W'',z)=(x',x'',ty,t^{-1}w,W',W'',tz).$$
Then there is a commutative diagram
\begin{diagram}[midshaft,size=2em]
(E''_\lambda)^{k^*} &\lTo^{\pi_\lambda^+} &(E''_\lambda)^+ &\rInto^{g_\lambda^+} &E''_\lambda\\
\dTo^{\simeq} & &\dTo^{\simeq} & &\vEq\\
E''^{\alpha'}_{\alpha_1,\beta_1}\times E''^{\beta'}_{\alpha_2,\beta_2} &\lTo^{f_\lambda} &\tilde{F}_\lambda &\rInto^{i_\lambda} &E''_\lambda,
\end{diagram}
where ${i_\lambda}:\tilde{F}_\lambda\hookrightarrow E''_\lambda$ is the inclusion. Assembling them together, the following diagram commutes
\begin{diagram}[midshaft,size=2em]
(E''^\gamma_{\alpha,\beta})^{k^*} &\lTo^{\pi^+} &(E''^\gamma_{\alpha,\beta})^+ &\rInto^{g^+} &E''^\gamma_{\alpha,\beta}\\
\dTo^{\simeq} & &\dTo^{\simeq} & &\vEq\\
\bigsqcup_{\lambda\in \cN}E''^{\alpha'}_{\alpha_1,\beta_1}\times E''^{\beta'}_{\alpha_2,\beta_2} &\lTo^{f} &\tilde{F} &\rInto^{\tilde{\iota}} &E''^\gamma_{\alpha,\beta},
\end{diagram}
where $\pi^+,g^+$ are assembled by $\pi_\lambda^+,g_\lambda^+$ with $\lambda\in \cN$ respectively.
Hence the functor $f_!(\tilde{\iota})^*:\cD^b_{Q^{\beta'},m}(E''^\gamma_{\alpha,\beta})\rightarrow \cD^b_{m}(\bigsqcup_{\lambda\in \cN}E''^{\alpha'}_{\alpha_1,\beta_1}\times E''^{\beta'}_{\alpha_2,\beta_2})$ is a hyperbolic localization functor, and so it sends pure complexes of weight $\eta$ to pure complexes of weight $\eta$ by Theorem \ref{hyperbolic}.
\end{proof}

\begin{corollary}\label{lambda-pure}
For any $\lambda=(\alpha_1,\alpha_2,\beta_1,\beta_2)\in \cN$, the complex 
$$({p_3}_\lambda)_!(f_\lambda)_!(\tilde{\iota}''_\lambda)^*({p_2}^{\gamma}_{\alpha,\beta})_\flat({p_1}^{\gamma}_{\alpha,\beta})^*(A\boxtimes B)$$
is pure of weight $\omega+\omega'$, and so it is semisimple in $\cD^b_{\rG_{V_{\alpha'}}\times \rG_{V_{\beta'}},m}(\rE_{V_{\alpha'}}\times \rE_{V_{\beta'}})$, where $A,B$ are simple perverse sheaves which are pure of weight $\omega,\omega'$ respectively in the beginning.
\end{corollary}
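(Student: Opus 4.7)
The plan is to verify that the complex in question is pure of weight $\omega+\omega'$ by tracking weights through the composition of functors, and then to appeal to Theorem \ref{BBD} to obtain semisimplicity. Since $A$ and $B$ are simple perverse sheaves pure of weights $\omega$ and $\omega'$ respectively, $A\boxtimes B$ is pure of weight $\omega+\omega'$ on $\rE_{V_\alpha}\times \rE_{V_\beta}$, and the goal is to propagate this purity through the five functors $({p_1}^{\gamma}_{\alpha,\beta})^{*}$, $({p_2}^{\gamma}_{\alpha,\beta})_\flat$, $(\tilde\iota''_\lambda)^{*}$, $(f_\lambda)_!$, $({p_3}_\lambda)_!$.

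The first two steps are routine. Since ${p_1}^{\gamma}_{\alpha,\beta}$ is smooth with connected fibres, Proposition \ref{pure-*!}(d) shows that $({p_1}^{\gamma}_{\alpha,\beta})^{*}$ preserves the weight. Since ${p_2}^{\gamma}_{\alpha,\beta}$ is a locally trivial principal bundle, Corollary \ref{principal} shows that $({p_2}^{\gamma}_{\alpha,\beta})_\flat$ also preserves the weight. This produces a $\rG_{V_\gamma}$-equivariant complex
$$C:=({p_2}^{\gamma}_{\alpha,\beta})_\flat({p_1}^{\gamma}_{\alpha,\beta})^{*}(A\boxtimes B)$$
pure of weight $\omega+\omega'$ on $E''^{\gamma}_{\alpha,\beta}$, which in particular is $Q^{\beta'}$-equivariant after applying the forgetful functor.

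The crucial step deals with $(\tilde\iota''_\lambda)^{*}$ and $(f_\lambda)_!$ simultaneously. Here I would invoke Lemma \ref{hyperbolic-2}, which identifies the composition $f_!\tilde\iota^{*}$ on $Q^{\beta'}$-equivariant complexes with a hyperbolic localization functor; together with Braden's Theorem \ref{hyperbolic}, this sends pure complexes of weight $\omega+\omega'$ to pure complexes of weight $\omega+\omega'$ on $\bigsqcup_{\lambda\in \cN} E''^{\alpha'}_{\alpha_1,\beta_1}\times E''^{\beta'}_{\alpha_2,\beta_2}$. Since a pure complex on a disjoint union is pure on each component, and base change along the open-closed inclusion of the $\lambda$-component recovers $(f_\lambda)_!(\tilde\iota''_\lambda)^{*}(C)$, this $\lambda$-summand is pure of weight $\omega+\omega'$.

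Finally, ${p_3}_\lambda={p_3}^{\alpha'}_{\alpha_1,\beta_1}\times {p_3}^{\beta'}_{\alpha_2,\beta_2}$ is a product of proper maps, hence proper, so Proposition \ref{pure-*!}(e) shows that $({p_3}_\lambda)_!$ preserves purity. Consequently the entire complex is pure of weight $\omega+\omega'$, and Theorem \ref{BBD} yields the desired semisimplicity in $\cD^{b,ss}_{\rG_{V_{\alpha'}}\times \rG_{V_{\beta'}},m}(\rE_{V_{\alpha'}}\times \rE_{V_{\beta'}})$. The one conceptually nontrivial step is the middle one: the inclusion $\tilde\iota$ is not open, so $\tilde\iota^{*}$ alone need not preserve weight bounds, and the hyperbolic localization reinterpretation from Lemma \ref{hyperbolic-2} is precisely what circumvents this obstacle.
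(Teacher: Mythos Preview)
Your proposal is correct and follows essentially the same argument as the paper's own proof: purity of $A\boxtimes B$, preservation under $({p_1}^{\gamma}_{\alpha,\beta})^{*}$ and $({p_2}^{\gamma}_{\alpha,\beta})_\flat$ via Proposition \ref{pure-*!}(d) and Corollary \ref{principal}, the key use of Lemma \ref{hyperbolic-2} to handle $f_!\tilde\iota^{*}$ as a hyperbolic localization, restriction to the clopen $\lambda$-component, and finally properness of ${p_3}_\lambda$ together with Theorem \ref{BBD}. Your identification of the middle step as the nontrivial one is exactly the point the paper's proof hinges on.
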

\begin{proof}
Since $A,B$ are pure of weight $\omega,\omega'$ respectively, we know that $A\boxtimes B$ is pure of weight $\omega+\omega'$ by Section 5.1.14 in \cite{Beilinson-Bernstein-Deligne-1982}. Then by Proposition \ref{pure-*!} and Corollary \ref{principal}, the complex 
$({p_2}^{\gamma}_{\alpha,\beta})_\flat({p_1}^{\gamma}_{\alpha,\beta})^*(A\boxtimes B)$
is pure of weight $\omega+\omega'$, and so the complex
$$f_!(\tilde{\iota}')^*({p_2}^{\gamma}_{\alpha,\beta})_\flat({p_1}^{\gamma}_{\alpha,\beta})^*(A\boxtimes B)$$
is pure of weight $\omega+\omega'$ by Lemma \ref{hyperbolic-2} which is semisimple complexs on 
$$\bigsqcup_{\lambda\in \cN}E''^{\alpha'}_{\alpha_1,\beta_1}\times E''^{\beta'}_{\alpha_2,\beta_2}.$$
Note that for any $\lambda=(\alpha_1,\alpha_2,\beta_1,\beta_2)\in \cN$, the subvariety $E''^{\alpha'}_{\alpha_1,\beta_1}\times E''^{\beta'}_{\alpha_2,\beta_2}$ is closed in $\tilde{F}_\lambda$, since it is zero section of the vector bundle $f_\lambda:\tilde{F}_\lambda\rightarrow E''^{\alpha'}_{\alpha_1,\beta_1}\times E''^{\beta'}_{\alpha_2,\beta_2}$. Thus each component is closed in $\bigsqcup_{\lambda\in \cN}E''^{\alpha'}_{\alpha_1,\beta_1}\times E''^{\beta'}_{\alpha_2,\beta_2}$, moreover, it is both open and closed, since there are finitely many components. Hence the restriction of $f_!(\tilde{\iota}')^*({p_2}^{\gamma}_{\alpha,\beta})_\flat({p_1}^{\gamma}_{\alpha,\beta})^*(A\boxtimes B)$ on each component $E''^{\alpha'}_{\alpha_1,\beta_1}\times E''^{\beta'}_{\alpha_2,\beta_2}$ is pure of weight $\omega+\omega'$ which is equal to 
$$(f_\lambda)_!(\tilde{\iota}''_\lambda)^*({p_2}^{\gamma}_{\alpha,\beta})_\flat({p_1}^{\gamma}_{\alpha,\beta})^*(A\boxtimes B).$$
Finally, by Proposition \ref{pure-*!} and Corollary \ref{semisimple-*!}, 
$$({p_3}_\lambda)_!(f_\lambda)_!(\tilde{\iota}''_\lambda)^*({p_2}^{\gamma}_{\alpha,\beta})_\flat({p_1}^{\gamma}_{\alpha,\beta})^*(A\boxtimes B)$$
is pure of weight $\omega+\omega'$, and is a semisimple complex.
\end{proof}

For each $n\in \bbZ_{\geqslant 0}$, let $\tilde{F}_{\leqslant n}$ be the union of $\tilde{F}_{n'}$ with $n'\leqslant n$, then $\tilde{F}_{\leqslant n-1}$ is closed in $\tilde{F}_{\leqslant n}$ and $\tilde{F}_{\leqslant n}\setminus \tilde{F}_{\leqslant n-1}=\tilde{F}_n$. Let $\iota_{\leqslant n}:\tilde{F}_{\leqslant n}\rightarrow \tilde{F}$ be the inclusion, by Section 1.10 in \cite{Lusztig-1985}, there is a distinguished triangle
$$(\iota_n)_!(\iota_n)^*(C)\rightarrow (\iota_{\leqslant n})_!(\iota_{\leqslant n})^*(C)\rightarrow(\iota_{\leqslant n-1})_!(\iota_{\leqslant n-1})^*(C)\rightarrow (\iota_n)_!(\iota_n)^*(C)[1]$$
in $\cD^b_{Q^{\beta'},m}(\tilde{F})$, where $C=(\tilde{\iota})^*({p_2}^{\gamma}_{\alpha,\beta})_\flat({p_1}^{\gamma}_{\alpha,\beta})^*(A\boxtimes B)$. Applying triangulated functor $(\kappa^{\gamma}_{\alpha',\beta'})_!(\tilde{p_3})_!$, we obtain a distinguished triangle  
\begin{align*}
&(\kappa^{\gamma}_{\alpha',\beta'})_!(\tilde{p_3})_!(\iota_n)_!(\iota_n)^*(C)\rightarrow (\kappa^{\gamma}_{\alpha',\beta'})_!(\tilde{p_3})_!(\iota_{\leqslant n})_!(\iota_{\leqslant n})^*(C)\\
&\rightarrow(\kappa^{\gamma}_{\alpha',\beta'})_!(\tilde{p_3})_!(\iota_{\leqslant n-1})_!(\iota_{\leqslant n-1})^*(C)\rightarrow (\kappa^{\gamma}_{\alpha',\beta'})_!(\iota_n)_!(\iota_n)^*(C)[1]
\end{align*}
in $\cD^b_{\rG_{V_{\alpha'}}\times\rG_{V_{\beta'}},m}(\rE_{V_{\alpha'}}\times \rE_{V_{\beta'}})$. Applying perverse cohomology functors, we obtain a long exact sequence of perverse sheaves
\begin{align*}
&...\rightarrow\lp H^{s-1}((\kappa^{\gamma}_{\alpha',\beta'})_!(\tilde{p_3})_!(\iota_{\leqslant n-1})_!(\iota_{\leqslant n-1})^*(C))\xrightarrow{\delta_s}\lp H^s((\kappa^{\gamma}_{\alpha',\beta'})_!(\tilde{p_3})_!(\iota_n)_!(\iota_n)^*(C))\\
&\rightarrow \lp H^s((\kappa^{\gamma}_{\alpha',\beta'})_!(\tilde{p_3})_!(\iota_{\leqslant n})_!(\iota_{\leqslant n})^*(C))\rightarrow\lp H^s((\kappa^{\gamma}_{\alpha',\beta'})_!(\tilde{p_3})_!(\iota_{\leqslant n-1})_!(\iota_{\leqslant n-1})^*(C))\rightarrow...
\end{align*}

\begin{lemma}\label{inductive}With the same notations as above, we have\\
(a) for any for $n\in \bbZ_{\geqslant 0}$ and $s\in \bbZ$, the connecting morphism $\delta_s=0$;\\
(b) for any $n\in \bbZ_{\geqslant 0}$, the complex $(\kappa^{\gamma}_{\alpha',\beta'})_!(\tilde{p_3})_!(\iota_{\leqslant n})_!(\iota_{\leqslant n})^*(C)$ is pure of weight $\omega+\omega'$ and 
\begin{align*}
&(\kappa^{\gamma}_{\alpha',\beta'})_!(\tilde{p_3})_!(\iota_{\leqslant n})_!(\iota_{\leqslant n})^*(C)\\
\simeq &(\kappa^{\gamma}_{\alpha',\beta'})_!(\tilde{p_3})_!(\iota_{\leqslant n-1})_!(\iota_{\leqslant n-1})^*(C)\oplus (\kappa^{\gamma}_{\alpha',\beta'})_!(\tilde{p_3})_!(\iota_n)_!(\iota_n)^*(C).
\end{align*}
\end{lemma}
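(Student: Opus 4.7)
The plan is to proceed by induction on $n$. The base case is $n=-1$ (or any $n$ with $\tilde{F}_{\leqslant n}=\varnothing$), where both assertions are vacuous because the relevant complex is zero. For the inductive step I abbreviate $X_n = (\kappa^{\gamma}_{\alpha',\beta'})_!(\tilde{p_3})_!(\iota_n)_!(\iota_n)^*(C)$ and $Y_m = (\kappa^{\gamma}_{\alpha',\beta'})_!(\tilde{p_3})_!(\iota_{\leqslant m})_!(\iota_{\leqslant m})^*(C)$, so the given distinguished triangle reads $X_n \to Y_n \to Y_{n-1} \to X_n[1]$.

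First I would show that $X_n$ is pure of weight $\omega+\omega'$: Lemma \ref{n-case} displays it as a direct sum over $\lambda \in \cN_n$ of the complexes appearing in Corollary \ref{lambda-pure}, each of which is pure of weight $\omega+\omega'$. Combined with the inductive assumption that $Y_{n-1}$ is pure of weight $\omega+\omega'$, the fact that $\cD^b_{\leqslant \omega+\omega'}$ and $\cD^b_{\geqslant \omega+\omega'}$ are closed under extensions (a standard consequence of the long exact sequence of ordinary cohomology applied to a distinguished triangle) forces $Y_n \in \cD^b_{\leqslant \omega+\omega'} \cap \cD^b_{\geqslant \omega+\omega'}$, so $Y_n$ is likewise pure of weight $\omega+\omega'$.

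Next, for (a) I would exploit a weight mismatch. By Theorem \ref{BBD} the pure complexes $X_n$ and $Y_{n-1}$ are semisimple, and the BBD decompositions $X_n \simeq \bigoplus_s \lp H^s(X_n)[-s]$ and $Y_{n-1} \simeq \bigoplus_s \lp H^s(Y_{n-1})[-s]$ together with Proposition \ref{pure-*!}(a) identify $\lp H^s(X_n)$ and $\lp H^s(Y_{n-1})$ as pure perverse sheaves of weight $\omega+\omega'+s$. Thus $\delta_s\colon \lp H^{s-1}(Y_{n-1}) \to \lp H^s(X_n)$ is a morphism between pure perverse sheaves whose weights differ by $1$. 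Any such morphism vanishes, because its image would be simultaneously a subquotient of two pure perverse sheaves of different weights and hence zero; this gives $\delta_s=0$ for every $s$.

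Finally, with (a) in hand, the long exact sequence breaks into short exact sequences $0 \to \lp H^s(X_n) \to \lp H^s(Y_n) \to \lp H^s(Y_{n-1}) \to 0$ of mixed perverse sheaves. Since $Y_n$ is pure, Theorem \ref{BBD} makes each $\lp H^s(Y_n)$ a semisimple perverse sheaf, so this sequence splits; combining with the BBD decompositions of $X_n, Y_n, Y_{n-1}$ as $\bigoplus_s \lp H^s(-)[-s]$ yields $Y_n \simeq X_n \oplus Y_{n-1}$. The main obstacle is the weight bookkeeping behind the vanishing of $\delta_s$: one has to verify that all three terms in the triangle are pure of exactly the same weight $\omega+\omega'$, which ultimately rests on Corollary \ref{lambda-pure} (whose proof invokes the weight-preservation of hyperbolic localization in Lemma \ref{hyperbolic-2}) and on the inductive synchronization of purity across the triangle.
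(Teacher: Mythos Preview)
Your proposal is correct and follows essentially the same approach as the paper: induction on $n$, purity of $X_n$ via Lemma~\ref{n-case} and Corollary~\ref{lambda-pure}, vanishing of $\delta_s$ by the weight mismatch between $\lp H^{s-1}(Y_{n-1})$ and $\lp H^s(X_n)$, and then splitting of the resulting short exact sequences by semisimplicity. The only difference is cosmetic ordering---you establish purity of $Y_n$ first via extension closure of $\cD^b_{\leqslant \omega+\omega'}$ and $\cD^b_{\geqslant \omega+\omega'}$ and then invoke it for the splitting, whereas the paper deduces purity of $Y_n$ after (a) from the short exact sequences of pure perverse sheaves; both routes are valid and rest on the same inputs.
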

\begin{proof}
We make an induction on $n$ to show (a) and (b). Note that if $n< 0$, we have $\tilde{F}_{\leqslant n}=\varnothing$ and the complex $(\kappa^{\gamma}_{\alpha',\beta'})_!(\tilde{p_3})_!(\iota_{\leqslant n})_!(\iota_{\leqslant n})^*(C)=0$, and so the statements are trivial. 

Assume that statements hold for $n-1$, then for any $s\in \bbZ$, by Corollary 5.4.4 in \cite{Beilinson-Bernstein-Deligne-1982}, the perverse sheaf $\lp H^{s-1}((\kappa^{\gamma}_{\alpha',\beta'})_!(\tilde{p_3})_!(\iota_{\leqslant n-1})_!(\iota_{\leqslant n-1})^*(C))$ is pure of weight $\omega+\omega'+s-1$. By Lemma \ref{n-case} and Corollary \ref{lambda-pure}, we know that the complexes
\begin{align*}
&(\kappa^{\gamma}_{\alpha',\beta'})_!(\tilde{p_3})_!(\iota_n)_!(\iota_n)^*(C)\\
\simeq &\bigoplus_{\lambda=(\alpha_1,\alpha_2,\beta_1,\beta_2)\in \cN_n}({p_3}_\lambda)_!(f_\lambda)_!(\tilde{\iota}''_\lambda)^*({p_2}^{\gamma}_{\alpha,\beta})_\flat({p_1}^{\gamma}_{\alpha,\beta})^*(A\boxtimes B)
\end{align*}
is pure of weight $\omega+\omega'$, so the perverse sheaf $\lp H^s((\kappa^{\gamma}_{\alpha',\beta'})_!(\tilde{p_3})_!(\iota_n)_!(\iota_n)^*(C))$ is pure of weight $\omega+\omega'+s$ by Corollary 5.4.4 in \cite{Beilinson-Bernstein-Deligne-1982}. Thus $\delta_s$ is a morphism between two perverse sheaves of different weights which must be zero, as desired. 

Then the long exact sequence decomposes into many short exact sequence 
\begin{align*}
0\rightarrow&\lp H^s((\kappa^{\gamma}_{\alpha',\beta'})_!(\tilde{p_3})_!(\iota_n)_!(\iota_n)^*(C))\rightarrow \lp H^s((\kappa^{\gamma}_{\alpha',\beta'})_!(\tilde{p_3})_!(\iota_{\leqslant n})_!(\iota_{\leqslant n})^*(C))\\
\rightarrow &\lp H^s((\kappa^{\gamma}_{\alpha',\beta'})_!(\tilde{p_3})_!(\iota_{\leqslant n-1})_!(\iota_{\leqslant n-1})^*(C))\rightarrow 0,
\end{align*}
where $\lp H^s((\kappa^{\gamma}_{\alpha',\beta'})_!(\tilde{p_3})_!(\iota_n)_!(\iota_n)^*(C))$ and $\lp H^s((\kappa^{\gamma}_{\alpha',\beta'})_!(\tilde{p_3})_!(\iota_{\leqslant n-1})_!(\iota_{\leqslant n-1})^*(C))$ are pure of weight of $\omega+\omega'+s$, thus so is $\lp H^s((\kappa^{\gamma}_{\alpha',\beta'})_!(\tilde{p_3})_!(\iota_{\leqslant n})_!(\iota_{\leqslant n})^*(C))$. Hence the complex $(\kappa^{\gamma}_{\alpha',\beta'})_!(\tilde{p_3})_!(\iota_{\leqslant n})_!(\iota_{\leqslant n})^*(C)$ is pure of weight $\omega+\omega'$ by Corollary 5.4.4 in \cite{Beilinson-Bernstein-Deligne-1982}. 

By Theorem \ref{BBD}, above exact sequences are short exact sequences of semisimple perverse sheaves which must split, and so 
\begin{align*}
&\lp H^s((\kappa^{\gamma}_{\alpha',\beta'})_!(\tilde{p_3})_!(\iota_{\leqslant n})_!(\iota_{\leqslant n})^*(C))\\
\simeq &\lp H^s((\kappa^{\gamma}_{\alpha',\beta'})_!(\tilde{p_3})_!(\iota_{\leqslant n-1})_!(\iota_{\leqslant n-1})^*(C))\oplus \lp H^s((\kappa^{\gamma}_{\alpha',\beta'})_!(\tilde{p_3})_!(\iota_n)_!(\iota_n)^*(C))
\end{align*}
for any $s\in \bbZ$. By Theorem 5.4.5 in \cite{Beilinson-Bernstein-Deligne-1982}, we have 
\begin{align*}
&(\kappa^{\gamma}_{\alpha',\beta'})_!(\tilde{p_3})_!(\iota_{\leqslant n})_!(\iota_{\leqslant n})^*(C)\\
\simeq &(\kappa^{\gamma}_{\alpha',\beta'})_!(\tilde{p_3})_!(\iota_{\leqslant n-1})_!(\iota_{\leqslant n-1})^*(C)\oplus (\kappa^{\gamma}_{\alpha',\beta'})_!(\tilde{p_3})_!(\iota_n)_!(\iota_n)^*(C),
\end{align*}
as desired.
\end{proof}

\begin{corollary}\label{Lusztig-inductive}
The left hand side of the formula in Theorem \ref{main} is isomorphic to 
\begin{align*}
&(\kappa^{\gamma}_{\alpha',\beta'})_!(\tilde{p_3})_!(\tilde{\iota})^*({p_2}^{\gamma}_{\alpha,\beta})_\flat({p_1}^{\gamma}_{\alpha,\beta})^*(A\boxtimes B)[M](\frac{M}{2})\\\simeq&\bigoplus_{\lambda=(\alpha_1,\alpha_2,\beta_1,\beta_2)\in \cN}({p_3}_\lambda)_!(f_\lambda)_!(\tilde{\iota}''_\lambda)^*({p_2}^{\gamma}_{\alpha,\beta})_\flat({p_1}^{\gamma}_{\alpha,\beta})^*(A\boxtimes B)[M](\frac{M}{2}).
\end{align*}
\end{corollary}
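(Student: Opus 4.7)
The plan is to assemble Lemma \ref{inductive} and Lemma \ref{n-case} to produce the claimed decomposition. Recall that in subsection~3.1 the left hand side of Theorem~\ref{main} has already been identified with
$$(\kappa^{\gamma}_{\alpha',\beta'})_!(\tilde{p_3})_!(\tilde{\iota})^*({p_2}^{\gamma}_{\alpha,\beta})_\flat({p_1}^{\gamma}_{\alpha,\beta})^*(A\boxtimes B)[M](\frac{M}{2}),$$
so the content of Corollary~\ref{Lusztig-inductive} is to identify the first factor (before the shift and twist) with the direct sum over $\lambda\in\cN$. Writing $C=(\tilde{\iota})^*({p_2}^{\gamma}_{\alpha,\beta})_\flat({p_1}^{\gamma}_{\alpha,\beta})^*(A\boxtimes B)$ as in the proof of Lemma~\ref{inductive}, the goal is therefore to compute $(\kappa^{\gamma}_{\alpha',\beta'})_!(\tilde{p_3})_!(C)$.

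First I would observe that the index set $\cN$ is finite, so the partition $\tilde{F}=\bigsqcup_{\lambda\in\cN}\tilde{F}_\lambda$ involves only finitely many strata, and the filtration $\varnothing=\tilde{F}_{\leqslant -1}\subset\tilde{F}_{\leqslant 0}\subset\tilde{F}_{\leqslant 1}\subset\cdots$ stabilizes: there is an $N\in\bbZ_{\geqslant 0}$ with $\tilde{F}_{\leqslant N}=\tilde{F}$. For such $N$ we have $(\iota_{\leqslant N})_!(\iota_{\leqslant N})^*(C)\simeq C$, so
$$(\kappa^{\gamma}_{\alpha',\beta'})_!(\tilde{p_3})_!(C)\simeq (\kappa^{\gamma}_{\alpha',\beta'})_!(\tilde{p_3})_!(\iota_{\leqslant N})_!(\iota_{\leqslant N})^*(C).$$

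Next I would iterate the direct sum decomposition of Lemma~\ref{inductive}(b). Applied successively for $n=0,1,\ldots,N$, and using the trivial vanishing $(\kappa^{\gamma}_{\alpha',\beta'})_!(\tilde{p_3})_!(\iota_{\leqslant -1})_!(\iota_{\leqslant -1})^*(C)=0$ as the base case, this yields
$$(\kappa^{\gamma}_{\alpha',\beta'})_!(\tilde{p_3})_!(\iota_{\leqslant N})_!(\iota_{\leqslant N})^*(C)\simeq \bigoplus_{n=0}^{N}(\kappa^{\gamma}_{\alpha',\beta'})_!(\tilde{p_3})_!(\iota_n)_!(\iota_n)^*(C).$$
Then I would apply Lemma~\ref{n-case} to each summand: the $n$-th piece is expressed as
$$\bigoplus_{\lambda\in\cN_n}({p_3}_\lambda)_!(f_\lambda)_!(\tilde{\iota}''_\lambda)^*({p_2}^{\gamma}_{\alpha,\beta})_\flat({p_1}^{\gamma}_{\alpha,\beta})^*(A\boxtimes B).$$
Since the sets $\cN_n$ partition $\cN$ as $n$ ranges over $\{0,1,\ldots,N\}$, concatenating these decompositions and re-applying the shift and Tate twist $[M](\frac{M}{2})$ produces exactly the right hand side of Corollary~\ref{Lusztig-inductive}.

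There is really no substantial obstacle remaining: the analytic content (vanishing of connecting morphisms, purity, semisimplicity of each stratum contribution) has been packaged into Lemma~\ref{inductive}, while the hyperbolic-localization identification of each piece has been carried out in Lemma~\ref{n-case}. The only point requiring mild care is that the filtration $\{\tilde{F}_{\leqslant n}\}$ exhausts $\tilde{F}$ in finitely many steps, which is immediate from the finiteness of $\cN$, and that one must iterate Lemma~\ref{inductive}(b) rather than invoking it once; both are bookkeeping rather than genuinely difficult.
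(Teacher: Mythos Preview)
Your proposal is correct and follows essentially the same approach as the paper: take $N$ large enough so that $\tilde{F}_{\leqslant N}=\tilde{F}$, iterate Lemma~\ref{inductive}(b) to split into a direct sum over $n$, then apply Lemma~\ref{n-case} to each summand and use $\cN=\bigcup_n\cN_n$. The paper's proof is the same argument with the same two lemmas in the same order.
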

\begin{proof}
Note that if $n$ is large enough, $\tilde{F}_{\leqslant n}=\tilde{F}$ and $\iota_{\leqslant n}$ is the identity morphism; if $n<0$, $\tilde{F}_{\leqslant n}=\varnothing$. Take $n$ large enough, then $\cN$ is the union of $\cN_{n'}$ with $0\leqslant n'\leqslant n$. By Lemma \ref{n-case} and Lemma \ref{inductive}, we have
\begin{align*}
&(\kappa^{\gamma}_{\alpha',\beta'})_!(\tilde{p_3})_!(\tilde{\iota})^*({p_2}^{\gamma}_{\alpha,\beta})_\flat({p_1}^{\gamma}_{\alpha,\beta})^*(A\boxtimes B)[M](\frac{M}{2})\\=&(\kappa^{\gamma}_{\alpha',\beta'})_!(\tilde{p_3})_!(\iota_{\leqslant n})_!(\iota_{\leqslant n})^*(\tilde{\iota})^*({p_2}^{\gamma}_{\alpha,\beta})_\flat({p_1}^{\gamma}_{\alpha,\beta})^*(A\boxtimes B)[M](\frac{M}{2})\\
\simeq&\bigoplus_{0\leqslant n'\leqslant n}(\kappa^{\gamma}_{\alpha',\beta'})_!(\tilde{p_3})_!(\iota_{n'})_!(\iota_{n'})^*(\tilde{\iota})^*({p_2}^{\gamma}_{\alpha,\beta})_\flat({p_1}^{\gamma}_{\alpha,\beta})^*(A\boxtimes B)[M](\frac{M}{2})\\
\simeq&\bigoplus_{0\leqslant n'\leqslant n}\bigoplus_{\lambda=(\alpha_1,\alpha_2,\beta_1,\beta_2)\in \cN_{n'}}({p_3}_\lambda)_!(f_\lambda)_!(\tilde{\iota}''_\lambda)^*({p_2}^{\gamma}_{\alpha,\beta})_\flat({p_1}^{\gamma}_{\alpha,\beta})^*(A\boxtimes B)[M](\frac{M}{2})\\
=&\bigoplus_{\lambda=(\alpha_1,\alpha_2,\beta_1,\beta_2)\in \cN}({p_3}_\lambda)_!(f_\lambda)_!(\tilde{\iota}''_\lambda)^*({p_2}^{\gamma}_{\alpha,\beta})_\flat({p_1}^{\gamma}_{\alpha,\beta})^*(A\boxtimes B)[M](\frac{M}{2}),
\end{align*}
as desired.
\end{proof}

For each $\lambda=(\alpha_1,\alpha_2,\beta_1,\beta_2)\in\cN$, let 
$$\cQ_\lambda=E'^{\gamma}_{\alpha,\beta}\times_{E''^{\gamma}_{\alpha,\beta}}\tilde{F}_\lambda$$
be the fibre product of ${p_2}^{\gamma}_{\alpha,\beta}:E'^{\gamma}_{\alpha,\beta}\rightarrow E''^{\gamma}_{\alpha,\beta}$ and $\tilde{\iota}''_\lambda:\tilde{F}_\lambda\hookrightarrow E''^{\gamma}_{\alpha,\beta}$, that is, there is a Cartesian diagram
\begin{diagram}[midshaft,size=2em]
E'^{\gamma}_{\alpha,\beta} &\rTo^{{p_2}^{\gamma}_{\alpha,\beta}} &E''^{\gamma}_{\alpha,\beta}\\
\uInto^{\tilde{\iota}_\lambda} &\Box &\uInto_{\tilde{\iota}''_\lambda}\\
\cQ_\lambda &\rTo^{\tilde{p_2}} &\tilde{F}_\lambda.
\end{diagram}

\begin{lemma}\label{p2-iota}
We have $(\tilde{\iota}''_\lambda)^*({p_2}^{\gamma}_{\alpha,\beta})_\flat\simeq (\tilde{p_2})_\flat(\tilde{\iota}_\lambda)^*$.
\end{lemma}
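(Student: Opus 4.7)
The plan is to reduce the claimed isomorphism to the standard Cartesian base-change for ordinary pullback $(-)^*$ combined with the quasi-inverse property of the $\flat$-construction supplied by Proposition \ref{Lusztig-principal}(b).

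First, I would verify that $\tilde{p_2}:\cQ_\lambda\to \tilde{F}_\lambda$ is again a locally trivial principal $\rG_{V_\alpha}\times\rG_{V_\beta}$-bundle. This is immediate: principal bundles are preserved under base change, and ${p_2}^\gamma_{\alpha,\beta}$ is such a bundle by construction, while the square in the statement is Cartesian. Hence Proposition \ref{Lusztig-principal}(b) applies to both ${p_2}^\gamma_{\alpha,\beta}$ and $\tilde{p_2}$, so $({p_2}^\gamma_{\alpha,\beta})_\flat$ (resp. $(\tilde{p_2})_\flat$) is quasi-inverse to $({p_2}^\gamma_{\alpha,\beta})^*$ (resp. $(\tilde{p_2})^*$), on the appropriate categories of equivariant semisimple complexes. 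I would also note that $\tilde{\iota}_\lambda$ and $\tilde{\iota}''_\lambda$ are closed embeddings equivariant for the remaining group actions, since the subvariety $\tilde{F}_\lambda\subset E''^\gamma_{\alpha,\beta}$ is cut out only in terms of data fixed under the $\rG_{V_\alpha}\times\rG_{V_\beta}$-action, so all pullbacks under consideration make sense at the level of equivariant categories.

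Second, from the Cartesian square one has the standard base-change isomorphism
\[
(\tilde{\iota}_\lambda)^*({p_2}^\gamma_{\alpha,\beta})^*\;\simeq\;(\tilde{p_2})^*(\tilde{\iota}''_\lambda)^*.
\]
Given any $K$ in the source category of $({p_2}^\gamma_{\alpha,\beta})_\flat$, I would set $L=({p_2}^\gamma_{\alpha,\beta})_\flat(K)$, so that $K\simeq ({p_2}^\gamma_{\alpha,\beta})^*(L)$ by the quasi-inverse property. Applying $(\tilde{\iota}_\lambda)^*$ and using base change yields
\[
(\tilde{\iota}_\lambda)^*(K)\;\simeq\;(\tilde{\iota}_\lambda)^*({p_2}^\gamma_{\alpha,\beta})^*(L)\;\simeq\;(\tilde{p_2})^*(\tilde{\iota}''_\lambda)^*(L).
\]
Applying the quasi-inverse $(\tilde{p_2})_\flat$ to both sides gives
\[
(\tilde{p_2})_\flat(\tilde{\iota}_\lambda)^*(K)\;\simeq\;(\tilde{\iota}''_\lambda)^*(L)\;=\;(\tilde{\iota}''_\lambda)^*({p_2}^\gamma_{\alpha,\beta})_\flat(K),
\]
which is exactly the claimed natural isomorphism.

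The main (modest) obstacle is bookkeeping the equivariance: one has to check that the base change of $(-)^*$ across the Cartesian square respects the $\rG_{V_\alpha}\times\rG_{V_\beta}\times \rG_{V_\gamma}$-action on the source and the $\rG_{V_\gamma}$-action on the target, so that the two compositions in the statement really are comparable as functors between the indicated equivariant categories, and that the explicit formula for $f_\flat$ recalled in Proposition \ref{Lusztig-principal}(b) is compatible with the base change (since both $\flat$-functors commute with the perverse cohomology operations that enter that formula, because smooth equivariant pullback commutes with $\lp H^\bullet$). Once this equivariant bookkeeping is in place, the argument above produces the isomorphism with no further work.
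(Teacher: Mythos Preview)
Your proof is correct and follows essentially the same route as the paper: both arguments observe that $\tilde{p_2}$ is again a principal $\rG_{V_\alpha}\times\rG_{V_\beta}$-bundle (so $(\tilde{p_2})_\flat$ is quasi-inverse to $(\tilde{p_2})^*$), use the commutativity of the Cartesian square to get $(\tilde{p_2})^*(\tilde{\iota}''_\lambda)^*=(\tilde{\iota}_\lambda)^*({p_2}^{\gamma}_{\alpha,\beta})^*$, and then compose with the quasi-inverses on each side. One small terminological remark: the identity $(\tilde{\iota}_\lambda)^*({p_2}^\gamma_{\alpha,\beta})^*\simeq(\tilde{p_2})^*(\tilde{\iota}''_\lambda)^*$ is just functoriality of $(-)^*$ along the commutative square, not a base-change theorem in the usual sense, though of course the identity is the one you need.
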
 
\begin{proof}
Since ${p_2}^{\gamma}_{\alpha,\beta}$ and $\tilde{p_2}$ are principal $\rG_{V_\alpha}\times \rG_{V_\beta}$-bundle, by Proposition \ref{Lusztig-principal}, $({p_2}^{\gamma}_{\alpha,\beta})_\flat, (\tilde{p_2})_\flat$ have quasi-inverses $({p_2}^{\gamma}_{\alpha,\beta})^*, (\tilde{p_2})^*$ respectively. By above commutative diagram, we have $(\tilde{p_2})^*(\tilde{\iota}''_\lambda)^*=(\tilde{\iota}_\lambda)^*({p_2}^{\gamma}_{\alpha,\beta})^*$, and so $(\tilde{\iota}''_\lambda)^*({p_2}^{\gamma}_{\alpha,\beta})_\flat\simeq (\tilde{p_2})_\flat(\tilde{\iota}_\lambda)^*$.
\end{proof}

By Corollary \ref{Lusztig-inductive} and Lemma \ref{p2-iota}, we obtain the following proposition.

\begin{proposition}\label{left}
The left hand side of the formula in Theorem \ref{main} is isomorphic to 
$$\bigoplus_{\lambda=(\alpha_1,\alpha_2,\beta_1,\beta_2)\in \cN}({p_3}_\lambda)_!(f_\lambda)_!(\tilde{p_2})_\flat(\tilde{\iota}_\lambda)^*({p_1}^{\gamma}_{\alpha,\beta})^*(A\boxtimes B)[M](\frac{M}{2}).$$
\end{proposition}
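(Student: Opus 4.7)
The plan is to combine Corollary \ref{Lusztig-inductive} with Lemma \ref{p2-iota} directly; the proof of Proposition \ref{left} amounts to a single substitution inside each summand of the direct sum, so essentially all the real work has already been carried out in the preceding subsection.

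First I would apply Corollary \ref{Lusztig-inductive}, which rewrites the left hand side of Theorem \ref{main} as
\begin{align*}
\bigoplus_{\lambda=(\alpha_1,\alpha_2,\beta_1,\beta_2)\in \cN}({p_3}_\lambda)_!(f_\lambda)_!(\tilde{\iota}''_\lambda)^*({p_2}^{\gamma}_{\alpha,\beta})_\flat({p_1}^{\gamma}_{\alpha,\beta})^*(A\boxtimes B)[M](\frac{M}{2}).
\end{align*}
Next I would invoke Lemma \ref{p2-iota}, which provides the isomorphism of functors $(\tilde{\iota}''_\lambda)^*({p_2}^{\gamma}_{\alpha,\beta})_\flat \simeq (\tilde{p_2})_\flat(\tilde{\iota}_\lambda)^*$. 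This isomorphism is obtained from the Cartesian square defining $\cQ_\lambda$ together with the fact, from Proposition \ref{Lusztig-principal}, that both ${p_2}^{\gamma}_{\alpha,\beta}$ and $\tilde{p_2}$ are locally trivial principal $\rG_{V_\alpha}\times\rG_{V_\beta}$-bundles, so that $(-)_\flat$ is a quasi-inverse of $(-)^*$ on each side. Applying the outer functors $({p_3}_\lambda)_!(f_\lambda)_!$ to the isomorphism and substituting summand by summand then yields the claimed formula.

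There is essentially no obstacle at this step; the only point worth verifying is that $\tilde{p_2}$ is again a principal $\rG_{V_\alpha}\times\rG_{V_\beta}$-bundle, which holds because the defining Cartesian diagram exhibits $\tilde{p_2}:\cQ_\lambda\to \tilde{F}_\lambda$ as the pullback of the principal bundle ${p_2}^{\gamma}_{\alpha,\beta}$ along $\tilde{\iota}''_\lambda$, and principal bundle structure is stable under base change. The true obstacles lay earlier, in Corollary \ref{Lusztig-inductive} (where one had to control the stratification $\tilde{F}=\bigsqcup_\lambda\tilde{F}_\lambda$ via hyperbolic localization and the purity-weight argument in Lemma \ref{inductive} to split the triangle), and in Lemma \ref{p2-iota}; once those are in hand, Proposition \ref{left} follows formally.
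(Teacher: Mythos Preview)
Your proposal is correct and follows exactly the paper's own approach: the paper's proof of Proposition \ref{left} consists of the single sentence ``By Corollary \ref{Lusztig-inductive} and Lemma \ref{p2-iota}, we obtain the following proposition,'' and your argument simply unpacks this by applying Corollary \ref{Lusztig-inductive} first and then substituting via Lemma \ref{p2-iota} in each summand.
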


\subsection{The right hand side}\

We draw the following diagram containing all data we will use, and we explain them later.
\begin{diagram}[midshaft,size=2em]
\rE_{V_\alpha}\times \rE_{V_\beta}\\
\uInto^{\iota_\lambda}\\
F^{\alpha}_{\alpha_1,\alpha_2}\times F^{\beta}_{\beta_1,\beta_2} &\lTo^{\tilde{p_1}_\lambda} &\cO_\lambda\\
\dTo^{\kappa_\lambda}& &\\
\rE_{V_{\alpha_1}}\times \rE_{V_{\alpha_2}}\times \rE_{V_{\beta_1}}\times \rE_{V_{\beta_2}}&\square&\dTo_{\tilde{\kappa}_\lambda}\\
\dTo^{\tau_\lambda}&&\\
\rE_{V_{\alpha_1}}\times \rE_{V_{\beta_1}}\times \rE_{V_{\alpha_2}}\times \rE_{V_{\beta_2}} &\lTo^{{p_1}_\lambda} &E'_\lambda &\rTo^{{p_2}_\lambda} &E''_\lambda &\rTo^{{p_3}_\lambda}&\rE_{V_{\alpha'}}\times \rE_{V_{\beta'}}
\end{diagram}

For each $\lambda=(\alpha_1,\alpha_2,\beta_1,\beta_2)\in \cN$, we denote by 
$E'_\lambda=E'^{\alpha'}_{\alpha_1,\beta_1}\times E'^{\beta'}_{\alpha_2,\beta_2}, E''_\lambda=E''^{\alpha'}_{\alpha_1,\beta_1}\times E''^{\beta'}_{\alpha_2,\beta_2}$ and $\iota_\lambda=\iota^{\alpha}_{\alpha_1,\alpha_2}\times \iota^{\beta}_{\beta_1,\beta_2}, \kappa_\lambda=\kappa^{\alpha}_{\alpha_1,\alpha_2}\times \kappa^{\beta}_{\beta_1,\beta_2},{p_1}_\lambda={p_1}^{\alpha'}_{\alpha_1,\beta_1}\times {p_1}^{\beta'}_{\alpha_2,\beta_2},{p_2}_\lambda={p_2}^{\alpha'}_{\alpha_1,\beta_1}\times {p_2}^{\beta'}_{\alpha_2,\beta_2}$.

By definition, the right hand side is equal to 
\begin{align*}
\bigoplus_{\lambda=(\alpha_1,\alpha_2,\beta_1,\beta_2)\in \cN}({p_3}_\lambda)_!({p_2}_\lambda)_\flat({p_1}_\lambda)^*(\tau_\lambda)_!(\kappa_\lambda)_!(\iota_\lambda)^*(A\boxtimes B)[N_\lambda-(\alpha_2,\beta_1)](\frac{N_\lambda-(\alpha_2,\beta_1)}{2}),
\end{align*}
where 
$N_\lambda=-\langle\alpha_1,\alpha_2\rangle-\langle\beta_1,\beta_2\rangle+\sum_{h\in H}(\alpha_{1h'}\beta_{1h''}+\alpha_{2h'}\beta_{2h''})+\sum_{i\in I}(\alpha_{1i}\beta_{1i}+\alpha_{2i}\alpha_{2i})$
for any $\lambda=(\alpha_1,\alpha_2,\beta_1,\beta_2)\in \cN$.

For each $\lambda=(\alpha_1,\alpha_2,\beta_1,\beta_2)\in \cN$, let 
$$\cO_\lambda=(F^{\alpha}_{\alpha_1,\alpha_2}\times F^{\beta}_{\beta_1,\beta_2})\times_{\rE_{V_{\alpha_1}}\times \rE_{V_{\beta_1}}\times \rE_{V_{\alpha_2}}\times\rE_{V_{\beta_2}}}E'_\lambda$$
be the fibre product of $\tau_\lambda\kappa_\lambda:F^{\alpha}_{\alpha_1,\alpha_2}\times F^{\beta}_{\beta_1,\beta_2}\rightarrow \rE_{V_{\alpha_1}}\times \rE_{V_{\beta_1}}\times \rE_{V_{\alpha_2}}\times\rE_{V_{\beta_2}}$ and ${p_1}_\lambda:E'_\lambda\rightarrow \rE_{V_{\alpha_1}}\times \rE_{V_{\beta_1}}\times \rE_{V_{\alpha_2}}\times\rE_{V_{\beta_2}}$, that is, there is a Cartesian diagram
\begin{diagram}[midshaft,size=2em]
F^{\alpha}_{\alpha_1,\alpha_2}\times F^{\beta}_{\beta_1,\beta_2} &\lTo^{\tilde{p_1}_\lambda} &\cO_\lambda\\
\dTo^{\tau_\lambda\kappa_\lambda} &\Box &\dTo_{\tilde{\kappa}_\lambda}\\
\rE_{V_{\alpha_1}}\times \rE_{V_{\beta_1}}\times \rE_{V_{\alpha_2}}\times\rE_{V_{\beta_2}} &\lTo^{{p_1}_\lambda} &E'_\lambda.
\end{diagram}

By base change, we have $({p_1}_\lambda)^*(\tau_\lambda)_!(\kappa_\lambda)_!\simeq (\tilde{\kappa}_\lambda)_!(\tilde{p_1}_\lambda)^*$, and so we have following proposition.
\begin{proposition}\label{right}
The right hand side of the formula in Theorem \ref{main} is isomorphic to 
$$\bigoplus_{\lambda=(\alpha_1,\alpha_2,\beta_1,\beta_2)\in \cN}({p_3}_\lambda)_!({p_2}_\lambda)_\flat(\tilde{\kappa}_\lambda)_!(\tilde{p_1}_\lambda)^*(\iota_\lambda)^*(A\boxtimes B)[N_\lambda-(\alpha_2,\beta_1)](\frac{N_\lambda-(\alpha_2,\beta_1)}{2}).$$
\end{proposition}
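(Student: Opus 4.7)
The proof of Proposition \ref{right} is a straightforward unfolding of definitions combined with a single application of proper base change. My plan is to work summand-by-summand: fix $\lambda = (\alpha_1, \alpha_2, \beta_1, \beta_2) \in \cN$, expand each occurrence of $\Res$ and $\Ind$ into its constituent pullback-pushforward composition, track the accumulated shifts and Tate twists, and then invoke base change on the Cartesian square defining $\cO_\lambda$.

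First I would apply the definition of $\Res$ from Section 2.5 to both $\Res^\alpha_{\alpha_1,\alpha_2} A$ and $\Res^\beta_{\beta_1,\beta_2} B$, then use the standard compatibility that the external product commutes with the product of pullbacks and proper pushforwards. Since $\iota_\lambda = \iota^\alpha_{\alpha_1,\alpha_2} \times \iota^\beta_{\beta_1,\beta_2}$ and $\kappa_\lambda = \kappa^\alpha_{\alpha_1,\alpha_2} \times \kappa^\beta_{\beta_1,\beta_2}$, this yields
$$(\Res^\alpha_{\alpha_1,\alpha_2} A) \boxtimes (\Res^\beta_{\beta_1,\beta_2} B) \simeq (\kappa_\lambda)_!(\iota_\lambda)^*(A\boxtimes B)[-\langle\alpha_1,\alpha_2\rangle - \langle\beta_1,\beta_2\rangle]\bigl(-\tfrac{\langle\alpha_1,\alpha_2\rangle + \langle\beta_1,\beta_2\rangle}{2}\bigr).$$
Analogously, expanding the outer induction via the definition in Section 2.4 gives
$$(\Ind^{\alpha'}_{\alpha_1,\beta_1} \boxtimes \Ind^{\beta'}_{\alpha_2,\beta_2})(X) \simeq ({p_3}_\lambda)_!({p_2}_\lambda)_\flat({p_1}_\lambda)^*(X)[s_\lambda]\bigl(\tfrac{s_\lambda}{2}\bigr),$$
where $s_\lambda = \sum_{h\in H}(\alpha_{1h'}\beta_{1h''} + \alpha_{2h'}\beta_{2h''}) + \sum_{i\in I}(\alpha_{1i}\beta_{1i} + \alpha_{2i}\beta_{2i})$. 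The sum of the two shift contributions is exactly $N_\lambda$, so together with the extra factor $[-(\alpha_2,\beta_1)](-\tfrac{(\alpha_2,\beta_1)}{2})$ from Theorem \ref{main}, the total shift matches $N_\lambda - (\alpha_2,\beta_1)$ as required.

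The remaining geometric input is proper base change for the Cartesian square displayed in the statement of the proposition, which gives
$$({p_1}_\lambda)^*(\tau_\lambda)_!(\kappa_\lambda)_! \simeq (\tilde{\kappa}_\lambda)_!(\tilde{p_1}_\lambda)^*$$
(here $\tau_\lambda$ is an isomorphism, so $(\tau_\lambda)_!$ just permutes coordinates and can be absorbed into the definition of the Cartesian square). Substituting this identity into the nested composition produced by the two unfoldings removes $\tau_\lambda$ and $\kappa_\lambda$ in favor of $\tilde{\kappa}_\lambda$ and $\tilde{p_1}_\lambda$, yielding precisely the formula claimed in Proposition \ref{right}.

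I do not anticipate any real obstacles. The argument is entirely formal, resting only on the definitions of $\Ind$ and $\Res$, the Künneth-type compatibility of $\boxtimes$ with pullback and proper pushforward along product morphisms, and one application of proper base change. The only bookkeeping that must be handled carefully is the accounting of shifts and Tate twists so that they accumulate correctly into $N_\lambda - (\alpha_2,\beta_1)$; the operations $({p_2}_\lambda)_\flat$, $(\tilde{\kappa}_\lambda)_!$, $(\tilde{p_1}_\lambda)^*$ and $(\iota_\lambda)^*$ commute through the base-change substitution in the expected way because $({p_2}_\lambda)_\flat$ is by Proposition \ref{Lusztig-principal}(b) a quasi-inverse to the pullback along a principal bundle and is applied only after the base change has been carried out on $E'_\lambda$.
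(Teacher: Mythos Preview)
Your proposal is correct and follows exactly the same approach as the paper: unfold the definitions of $\Res$ and $\Ind$ componentwise (the paper does this in one line, writing the right hand side directly as $({p_3}_\lambda)_!({p_2}_\lambda)_\flat({p_1}_\lambda)^*(\tau_\lambda)_!(\kappa_\lambda)_!(\iota_\lambda)^*(A\boxtimes B)[N_\lambda-(\alpha_2,\beta_1)](\tfrac{N_\lambda-(\alpha_2,\beta_1)}{2})$), and then apply proper base change on the Cartesian square defining $\cO_\lambda$ to replace $({p_1}_\lambda)^*(\tau_\lambda)_!(\kappa_\lambda)_!$ by $(\tilde{\kappa}_\lambda)_!(\tilde{p_1}_\lambda)^*$. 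Your write-up simply makes explicit the bookkeeping of shifts that the paper absorbs into the definition of $N_\lambda$.
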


\subsection{Connection between two sides}\

We will define a variety $\cP_\lambda$ and morphisms $\varphi_\lambda:\cP_\lambda\rightarrow \cQ_\lambda,\psi_\lambda:\cP_\lambda\rightarrow \cO_\lambda$ for any $\lambda=(\alpha,\alpha_2,\beta_1,\beta_2)\in \cN$.
\begin{diagram}[midshaft,size=2em]
\rE_{V_\alpha}\times \rE_{V_\beta} &&&\lTo^{{p_1}^{\gamma}_{\alpha,\beta}} &E'^{\gamma}_{\alpha,\beta}\\
 &&&&\uInto^{\tilde{\iota}_\lambda}\\
\uInto^{\iota_\lambda}&&& & \cQ_\lambda &\rTo^{\tilde{p_2}} &\tilde{F}_\lambda\\
& &&&\uDashto^{\varphi_\lambda}  & &\vEq& & & &\\
F^{\alpha}_{\alpha_1,\alpha_2}\times F^{\beta}_{\beta_1,\beta_2} &\lTo^{\tilde{p_1}_\lambda} &\cO_\lambda&\lDashto^{\psi_\lambda}&\cP_\lambda&\rTo^{\tilde{p_2}_\lambda}&\tilde{F}_\lambda\\
\dTo^{\kappa_\lambda}&&&&& &\\
\rE_{V_{\alpha_1}}\times \rE_{V_{\alpha_2}}\times \rE_{V_{\beta_1}}\times \rE_{V_{\beta_2}}&\square&\dTo_{\tilde{\kappa}_\lambda}&&\dTo^{\tilde{f}_\lambda}&\square&\dTo_{f_\lambda}\\
\dTo^{\tau_\lambda}&&&&\\
\rE_{V_{\alpha_1}}\times \rE_{V_{\beta_1}}\times \rE_{V_{\alpha_2}}\times \rE_{V_{\beta_2}} &\lTo^{{p_1}_\lambda} &E'_\lambda&\hEq &E'_\lambda &\rTo^{{p_2}_\lambda} &E''_\lambda 
\end{diagram}

Up to isomorphism, the definitions of induction and restriction functors are independent of the choices of $I$-graded vector space $V_\nu$ for each dimension vector $\nu\in \bbN I$, and the definition of $\Res^\nu_{\nu',\nu''}$ is independent of the choices of the fixed $I$-graded subspace $W^{\nu'}\subset V_\nu$ and the fixed $I$-graded linear isomorphisms $\rho_1^{\nu''}:V_\nu/W^{\nu''}\xrightarrow{\simeq} V_{\nu'},\rho_2^{\nu''}:W^{\nu''}\xrightarrow{\simeq}V_{\nu''}$. From now on, we assume that $V_{\alpha}=V_{\alpha_1}\bigoplus V_{\alpha_2}, V_{\beta}=V_{\beta_1}\bigoplus V_{\beta_2},V_{\alpha'}=V_{\alpha_1}\bigoplus V_{\beta_1}, V_{\beta'}=V_{\alpha_2}\bigoplus V_{\beta_2}, V_\gamma=V_\alpha\bigoplus V_\beta=V_{\alpha'}\bigoplus V_{\beta'}=V_{\alpha_1}\bigoplus V_{\alpha_2}\bigoplus V_{\beta_1}\bigoplus V_{\beta_2}$, and assume that the fixed $I$-graded subspaces $W^{\beta'}=V_{\beta'},W^{\alpha_2}=V_{\alpha_2}, W^{\beta_2}=V_{\beta_2}$, the fixed $I$-graded linear isomorphisms $\rho_1^{\beta'},\rho_2^{\beta'},\rho_1^{\alpha_2},\rho_2^{\alpha_2},\rho_1^{\beta_2},\rho_2^{\beta_2}$ are the identity maps. 

Note that the variety $\cQ_\lambda$ consists of $(x,W,\rho_1,\rho_2)$, where $x\in \rE_{V_\gamma}$ such that $V_{\beta'}$ is $x$-stable, $W\subset V_\gamma$ is a $x$-stable $I$-graded subspace of dimension vector $\beta$ such that the dimension vector of $W\cap V_{\beta'}$ is $\beta_2$, and $\rho_1:V_\gamma/W\xrightarrow{\simeq}V_\alpha, \rho_2:W\xrightarrow{\simeq}V_\beta$ are $I$-graded linear isomorphisms. 

The variety $\cO_\lambda$ consists of $(x_\alpha,x_\beta, (x_{\alpha'}, W_1, \rho_{11},\rho_{12}), (x_{\beta'}, W_2, \rho_{21},\rho_{22}))$, where $x_\alpha\in \rE_{V_\alpha},x_\beta\in \rE_{V_\beta},x_{\alpha'}\in\rE_{V_{\alpha'}},x_{\beta'}\in \rE_{V_{\beta'}}$ such that $V_{\alpha_2}$ is $x_\alpha$-stable and $V_{\beta_2}$ is $x_\beta$-stable, $W_1\subset V_{\alpha'}$ is a $x_{\alpha'}$-stable $I$-graded subspace of dimension vector $\beta_1$, $W_2\subset V_{\beta'}$ is a $x_{\beta'}$-stable $I$-graded subspace of dimension vector $\beta_2$, and $\rho_{11}:V_{\alpha'}/W_1\xrightarrow{\simeq} V_{\alpha_1},\rho_{12}:W_1\xrightarrow{\simeq} V_{\beta_1},\rho_{21}:V_{\beta'}/W_2\xrightarrow{\simeq} V_{\alpha_2},\rho_{22}:W_2\xrightarrow{\simeq} V_{\beta_2}$ are $I$-graded linear isomorphisms satisfying
\begin{align*}
\overline{x_\alpha}^{V_{\alpha_2}}=(\rho_{11})_*\overline{x_{\alpha'}}^{W_1},\ &\overline{x_\beta}^{V_{\beta_2}}=(\rho_{12})_*x_{\alpha'}|_{W_1},\\
x_\alpha|_{V_{\alpha_2}}=(\rho_{21})_*\overline{x_{\beta'}}^{W_2},\ &x_\beta|_{V_{\beta_2}}=(\rho_{22})_*x_{\beta'}|_{W_2}.
\end{align*}

Let 
$$\cP_\lambda=\tilde{F}_\lambda\times_{E''_\lambda}E'_\lambda$$
be the fibre product of $f_\lambda:\tilde{F}_\lambda\rightarrow E''_\lambda$ and ${p_2}_\lambda:E'_\lambda\rightarrow E''_\lambda$, that is, there is a Cartesian diagram
\begin{diagram}[midshaft,size=2em]
\cP_\lambda &\rTo^{\tilde{p_2}_\lambda} &\tilde{F}_\lambda\\
\dTo^{\tilde{f}_\lambda} &\Box &\dTo_{f_\lambda}\\
E'_\lambda &\rTo^{{p_2}_\lambda} &E''_\lambda.
\end{diagram}

Note that the variety $\cP_\lambda$ consists of $(x,W,\rho_{11},\rho_{12},\rho_{21},\rho_{22})$, where $x\in \rE_{V_\gamma}$ such that $V_{\beta'}$ is $x$-stable, $W\subset V_\gamma$ is a $x$-stable $I$ -graded subspace of dimension vector $\beta$ such that the dimension vector of $W\cap V_{\beta'}$ is $\beta_2$, and $\rho_{11}:V_{\alpha'}/(W/W\cap V_{\beta'})\xrightarrow{\simeq}V_{\alpha_1},\rho_{12}:W/W\cap V_{\beta'}\xrightarrow{\simeq}V_{\beta_1},\rho_{21}:V_{\beta'}/W\cap V_{\beta'}\xrightarrow{\simeq}V_{\alpha_2},\rho_{22}:W\cap V_{\beta'}\xrightarrow{\simeq}V_{\beta_2}$ are $I$-graded linear isomorphisms, where we regard $W/W\cap V_{\beta'}\simeq W+V_{\beta'}/V_{\beta'}$ as a subspace of $V_\gamma/V_{\beta'}=V_{\alpha'}$. Note that the morphism $\tilde{p_2}_\lambda:\cP_\lambda\rightarrow \tilde{F}_\lambda$ is a principal $\rG_{V_{\alpha_1}}\times \rG_{V_{\beta_1}}\times \rG_{V_{\alpha_2}}\times \rG_{V_{\beta_2}}$-bundle.

\begin{lemma}\label{f-p2}
We have ${f_\lambda}_!(\tilde{p_2}_\lambda)_\flat\simeq ({p_2}_\lambda)_\flat(\tilde{f}_\lambda)_!$.
\end{lemma}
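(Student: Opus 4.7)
The plan is to leverage the Cartesian property of the square together with Proposition \ref{Lusztig-principal}, which provides quasi-inverses in both the top and the bottom row. Since $\tilde{p_2}_\lambda$ and ${p_2}_\lambda$ are both principal $G$-bundles for $G = \rG_{V_{\alpha_1}}\times \rG_{V_{\beta_1}}\times \rG_{V_{\alpha_2}}\times \rG_{V_{\beta_2}}$, the pullbacks $(\tilde{p_2}_\lambda)^*$ and $({p_2}_\lambda)^*$ are quasi-inverses to $(\tilde{p_2}_\lambda)_\flat$ and $({p_2}_\lambda)_\flat$ respectively on the categories of mixed semisimple complexes. This reduces the lemma to a base-change computation.

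First I would invoke proper (equivalently, smooth) base change along the Cartesian square in the statement to obtain a natural isomorphism
$$({p_2}_\lambda)^* f_{\lambda !} \simeq \tilde{f}_{\lambda !} (\tilde{p_2}_\lambda)^*$$
as functors from $\cD^b_m(\tilde{F}_\lambda)$ to $\cD^b_m(E'_\lambda)$, where the equivariance is induced by the $G$-actions carried by $\tilde{p_2}_\lambda$ and $\tilde{f}_\lambda$.

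Next, given any $K \in \cD^{b,ss}_{G,m}(\cP_\lambda)$, I would start from the tautological isomorphism $(\tilde{p_2}_\lambda)^*(\tilde{p_2}_\lambda)_\flat K \simeq K$ coming from Proposition \ref{Lusztig-principal}, apply $\tilde{f}_{\lambda !}$, and then use the base change above to rewrite
$$\tilde{f}_{\lambda !} K \;\simeq\; \tilde{f}_{\lambda !}(\tilde{p_2}_\lambda)^*(\tilde{p_2}_\lambda)_\flat K \;\simeq\; ({p_2}_\lambda)^* f_{\lambda !}(\tilde{p_2}_\lambda)_\flat K.$$
Applying the quasi-inverse $({p_2}_\lambda)_\flat$ to both sides and using $({p_2}_\lambda)_\flat({p_2}_\lambda)^* \simeq \mathrm{Id}$ yields the desired isomorphism $({p_2}_\lambda)_\flat \tilde{f}_{\lambda !} K \simeq f_{\lambda !}(\tilde{p_2}_\lambda)_\flat K$, functorial in $K$.

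I do not anticipate serious obstacles: the only point that deserves a quick verification is that $\tilde{f}_\lambda$ is $G$-equivariant, which follows directly from the construction of $\cP_\lambda$ as a fibre product together with the $G$-equivariance of ${p_2}_\lambda$. Once this is noted, both sides of the claimed isomorphism live in $\cD^{b,ss}_m(E''_\lambda)$, and the argument above is a purely formal manipulation of the base change and quasi-inverse identities.
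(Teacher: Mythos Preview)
Your proposal is correct and follows essentially the same approach as the paper's proof: both invoke base change on the Cartesian square to get $({p_2}_\lambda)^*(f_\lambda)_!\simeq (\tilde{f}_\lambda)_!(\tilde{p_2}_\lambda)^*$, then use Proposition \ref{Lusztig-principal} to pass to the quasi-inverses $(\tilde{p_2}_\lambda)_\flat$ and $({p_2}_\lambda)_\flat$. Your write-up is slightly more explicit in spelling out the intermediate steps, but the argument is the same.
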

\begin{proof}
By base change of above Cartesian diagram, we have $({p_2}_\lambda)^*(f_\lambda)_!\simeq ({\tilde{f}_\lambda})_!(\tilde{p_2}_\lambda)^*$. By Proposition \ref{Lusztig-principal}, $(\tilde{p_2}_\lambda)_\flat, ({p_2}_\lambda)_\flat$ have quasi-inverses $(\tilde{p_2}_\lambda)^*,({p_2}_\lambda)^*$ respectively, so $({f_\lambda})_!(\tilde{p_2}_\lambda)_\flat\simeq ({p_2}_\lambda)_\flat(\tilde{f}_\lambda)_!$.
\end{proof}

\begin{lemma}\label{psi}
There is a smooth morphism $\psi_\lambda:\cP_\lambda\rightarrow \cO_\lambda$ with connected fibres of dimension
$$K_\lambda=L_\lambda-\sum_{h\in H}(\alpha_{1h'}\alpha_{2h''}+\beta_{1h'}\beta_{2h''})$$
such that $\tilde{f}_\lambda=\tilde{\kappa}_\lambda\psi_\lambda$.
\end{lemma}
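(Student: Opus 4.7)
The plan is to exhibit $\psi_\lambda$ as a morphism of vector bundles over $E'_\lambda$ and then read off smoothness, connectedness of fibres, and the fibre dimension by linear algebra.

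First I observe that both source and target are naturally vector bundles over $E'_\lambda$. Since $\cP_\lambda=\tilde F_\lambda\times_{E''_\lambda}E'_\lambda$, Lemma \ref{vector bundle} together with base change gives that $\tilde f_\lambda:\cP_\lambda\to E'_\lambda$ is a vector bundle of rank $L_\lambda$. Since $\kappa^\alpha_{\alpha_1,\alpha_2}$ and $\kappa^\beta_{\beta_1,\beta_2}$ are vector bundles and $\tau_\lambda$ is an isomorphism, $\tau_\lambda\kappa_\lambda$ is a vector bundle of rank $\sum_{h\in H}(\alpha_{1h'}\alpha_{2h''}+\beta_{1h'}\beta_{2h''})$, whose pullback along ${p_1}_\lambda$ realises $\tilde\kappa_\lambda:\cO_\lambda\to E'_\lambda$ as a vector bundle of the same rank. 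The required identity $\tilde f_\lambda=\tilde\kappa_\lambda\psi_\lambda$ then forces $\psi_\lambda$ to be a morphism of bundles over $E'_\lambda$.

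The second step is to define $\psi_\lambda$ fibrewise. Over a point $\xi=((x_{\alpha'},W_1,\rho_{11},\rho_{12}),(x_{\beta'},W_2,\rho_{21},\rho_{22}))\in E'_\lambda$, using $V_\gamma=V_{\alpha'}\oplus V_{\beta'}$, the fibre of $\tilde f_\lambda$ is parametrized by pairs $(b,z)$: here $b\in\bigoplus_{h\in H}\Hom_k((V_{\alpha'})_{h'},(V_{\beta'})_{h''})$ records the off-diagonal block of $x$, and $z:W_1\to V_{\beta'}/W_2$ parametrizes $W$ as in the proof of Lemma \ref{hyperbolic-2}, subject to the single linear $x$-stability relation $\bar b|_{W_1}=z\circ x_{\alpha'}|_{W_1}-\overline{x_{\beta'}}^{W_2}\circ z$. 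The fibre of $\tilde\kappa_\lambda$ is the pair $(y^\alpha,y^\beta)$ of off-diagonal blocks of $x_\alpha$ on $V_\alpha=V_{\alpha_1}\oplus V_{\alpha_2}$ and $x_\beta$ on $V_\beta=V_{\beta_1}\oplus V_{\beta_2}$, the remaining blocks being determined by the compatibility constraints and the $\rho_{ij}$. Decomposing $b$ into its four sub-blocks using the finer splitting $V_\gamma=V_{\alpha_1}\oplus V_{\alpha_2}\oplus V_{\beta_1}\oplus V_{\beta_2}$, the fibrewise $\psi_\lambda$ would send $(b,z)$ to the pair $(y^\alpha,y^\beta)$ obtained by transporting the $(V_{\alpha_1}\to V_{\alpha_2})$-sub-block of $b$ via $\rho_{11},\rho_{21}$ and the $(V_{\beta_1}\to V_{\beta_2})$-sub-block via $\rho_{12},\rho_{22}$, possibly together with $z$-dependent corrections needed to make the assignment intrinsic.

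A direct linear-algebra computation then shows that this fibrewise map is surjective, with kernel equal to the direct sum of the $(V_{\alpha_1}\to V_{\beta_2})$-sub-block of $b$ (dimension $\sum_{h\in H}\alpha_{1h'}\beta_{2h''}$) and the residual freedom in $z$ after the $x$-stability constraint has absorbed the $(V_{\beta_1}\to V_{\alpha_2})$-sub-block of $b$ (dimension $\sum_{i\in I}\alpha_{2i}\beta_{1i}$), summing to $K_\lambda$. Thus $\psi_\lambda$ is a vector bundle epimorphism, so it is smooth with connected affine fibres of dimension $K_\lambda$, and the required commutativity is automatic by construction. The main obstacle is producing the fibrewise formula $(b,z)\mapsto(y^\alpha,y^\beta)$ intrinsically, using only $\rho_{ij}$, $W_1,W_2,z,b$, since the naive construction via linear splittings of $0\to V_{\alpha_2}\to V_\gamma/W\to V_{\alpha_1}\to 0$ and $0\to V_{\beta_2}\to W\to V_{\beta_1}\to 0$ involves non-canonical choices that are only generically available; once the correct intrinsic formula is in place, the dimension count above finishes the proof.
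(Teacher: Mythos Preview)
Your approach is essentially the paper's: both regard $\psi_\lambda$ as a morphism of vector bundles over $E'_\lambda$ (via $\tilde f_\lambda$ and $\tilde\kappa_\lambda$) and define it by projecting the off-diagonal datum $y$ (your $b$) onto its $(V_{\alpha_1}\!\to\! V_{\alpha_2})$ and $(V_{\beta_1}\!\to\! V_{\beta_2})$ sub-blocks. The paper simply sets $y_1=y_{11}$, $y_2=y_{22}$ with \emph{no} $z$-dependent correction and disposes of smoothness in one line (``$y\mapsto(y_1,y_2)$ is a projection'').

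The obstacle you flag --- producing the formula intrinsically without non-canonical splittings of $0\to V_{\alpha_2}\to V_\gamma/W\to V_{\alpha_1}\to 0$ and its companion --- is exactly what the paper circumvents by its standing convention at the beginning of \S3.3: $V_{\alpha'}=V_{\alpha_1}\oplus V_{\beta_1}$, $V_{\beta'}=V_{\alpha_2}\oplus V_{\beta_2}$, with the fixed $\rho$'s taken to be identities. These are direct-sum decompositions of the \emph{ambient} spaces $V_{\alpha'},V_{\beta'}$, not of $V_\gamma/W$ or $W$; hence the four-block decomposition of $y\in\bigoplus_h\Hom_k((V_{\alpha'})_{h'},(V_{\beta'})_{h''})$ is globally defined and independent of $W_1,W_2,z$. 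The splittings of the short exact sequences you worry about are never needed for $\psi_\lambda$ itself (they only enter later, in the construction of $\varphi_\lambda$ in Lemma~\ref{varphi}, where $z$-dependent isomorphisms $\zeta_z,\xi_z$ are chosen). Once you adopt this convention, the ``intrinsic formula'' is just the block projection, and your kernel computation --- the $(V_{\alpha_1}\!\to\!V_{\beta_2})$ block together with the residual freedom in $z$ --- gives the claimed fibre dimension $K_\lambda=\sum_h\alpha_{1h'}\beta_{2h''}+\sum_i\alpha_{2i}\beta_{1i}$, matching $L_\lambda-\sum_h(\alpha_{1h'}\alpha_{2h''}+\beta_{1h'}\beta_{2h''})$.
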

\begin{proof}
For any $p=(x,W,\rho_{11},\rho_{12},\rho_{21},\rho_{22})\in \cP_\lambda$, we need to define an element $\psi_\lambda(p)=(x_\alpha,x_\beta, (x_{\alpha'}, W_1, \rho_{11},\rho_{12}), (x_{\beta'}, W_2, \rho_{21},\rho_{22}))\in \cO_\lambda$. 

We define $(x_{\alpha'},W_1,x_{\beta'},W_2)=f_\lambda(x,W)$, that is
\begin{align*}
x_{\alpha'}=\overline{x}^{V_{\beta'}}, x_{\beta'}=x|_{V_{\beta'}}, W_1=W/W\cap V_{\beta'}, W_2=W\cap V_{\beta'}.
\end{align*}
and $\rho_{11},\rho_{12},\rho_{21},\rho_{22}$ are the same as them in $p$.
It remains to define $(x_{\alpha},x_{\beta})\in F^{\alpha}_{\alpha_1,\alpha_2}$. 

Note that 
$$\tau_\lambda\kappa_\lambda:F^{\alpha}_{\alpha_1,\alpha_2}\times F^{\beta}_{\beta_1,\beta_2}\rightarrow \rE_{V_{\alpha_1}}\times \rE_{V_{\beta_1}}\times \rE_{V_{\alpha_2}}\times \rE_{V_{\beta_2}}$$
is a vector bundle whose fibres can be identified with 
$$\bigoplus_{h\in H}(\Hom_k((V_{\alpha_1})_{h'},(V_{\alpha_2})_{h''})\bigoplus \Hom_k((V_{\beta_1})_{h'},(V_{\beta_2})_{h''})).$$
By the definition of $\cO_\lambda$, the image
$$\tau_\lambda\kappa_\lambda(x_\alpha,x_\beta)={p_1}_\lambda((x_{\alpha'}, W_1, \rho_{11},\rho_{12}), (x_{\beta'}, W_2, \rho_{21},\rho_{22}))$$ is already determined, and so it remains to define an element $(y_1,y_2)$ in the fibre $$\bigoplus_{h\in H}(\Hom_k((V_{\alpha_1})_{h'},(V_{\alpha_2})_{h''})\bigoplus \Hom_k((V_{\beta_1})_{h'},(V_{\beta_2})_{h''})).$$

By the proof of Lemma \ref{vector bundle}, the fibre of the vector bundle 
$$f_\lambda:\tilde{F}_\lambda\rightarrow E''_\lambda$$
at $((x_{\alpha'},W_1),(x_{\beta'},W_2))\in E''_\lambda$ can be identified with the linear space of pairs $(z,y)$, where $z:W_1\rightarrow V_{\beta'}/W_2$ is a $I$-graded linear map and $y\in \bigoplus_{h\in H}\Hom_k((V_{\alpha'})_{h'},(V_{\beta'})_{h''})$ satisfying certain conditions. Thus, from $p=(x,W,\rho_{11},\rho_{12},\rho_{21},\rho_{22})\in \cP_\lambda$, we have $(x,W)\in \tilde{F}_\lambda$ which can be identified with $(x_{\alpha'},W_1,x_{\beta'},W_2,z,y)$, where $y\in \bigoplus_{h\in H}\Hom_k((V_{\alpha'})_{h'},(V_{\beta'})_{h''})$ can be written as
$$\begin{pmatrix}
y_{11}&y_{12}\\
y_{21}&y_{22}
\end{pmatrix}\in \bigoplus_{h\in H}\begin{pmatrix}
\Hom_k((V_{\alpha_1})_{h'},(V_{\alpha_2})_{h''})&\Hom_k((V_{\alpha_1})_{h'},(V_{\beta_2})_{h''})\\
\Hom_k((V_{\beta_1})_{h'},(V_{\alpha_2})_{h''})&\Hom_k((V_{\beta_1})_{h'},(V_{\beta_2})_{h''})\end{pmatrix}.$$

Hence we may define $y_1=y_{11},y_2=y_{22}$.

Since ${p_1}_\lambda$ is smooth with connected fibres, $f_\lambda$ is a vector bundle and $y\mapsto (y_1,y_2)$ is a projection, we know that $\psi_\lambda$ is smooth with connected fibres.

It is clear that $\tilde{f}_\lambda=\tilde{\kappa}_\lambda\psi_\lambda$, and the fibres of $\tilde{f}_\lambda$ have the same dimension as the fibres of $f_\lambda$, that is $L_\lambda$, while the fibres of $\tilde{\kappa}_\lambda$ have the same dimension as the fibres of $\tau_\lambda\kappa_\lambda$, that is $\sum_{h\in H}(\alpha_{1h'}\alpha_{2h''}+\beta_{1h'}\beta_{2h''})$. Therefore, $K_\lambda=L_\lambda-\sum_{h\in H}(\alpha_{1h'}\alpha_{2h''}+\beta_{1h'}\beta_{2h''})$.
\end{proof}

The morphism $\iota_\lambda\tilde{p_1}_\lambda\psi_\lambda$ can be written as
\begin{align*}
(x,W,\rho_{11},\rho_{12},\rho_{21},\rho_{22})=(x_{\alpha'},W_1,x_{\beta'},W_2,z,y,\rho_{11},\rho_{12},\rho_{21},\rho_{22})\mapsto \\
(\begin{pmatrix}
\rho_{11}\overline{x_{\alpha'}}^{W_1}\rho_{11}^{-1} &0\\
y_{11} &\rho_{21}\overline{x_{\beta'}}^{W_2}\rho_{21}^{-1}
\end{pmatrix},\begin{pmatrix}
\rho_{12}x_{\alpha'}|_{W_1}\rho_{12}^{-1} &0\\
y_{22} &\rho_{22}x_{\beta'}|_{W_2}\rho_{22}^{-1}
\end{pmatrix}),
\end{align*}
where we write elements in $\rE_{V_\alpha},\rE_{V_\beta}$ as block matrices with respect to the direct sum decompositions $V_\alpha=V_{\alpha_1}\bigoplus V_{\alpha_2}, V_\beta=V_{\beta_1}\bigoplus V_{\beta_2}$ respectively. 

We want to define a morphism $\varphi_\lambda:\cP_\lambda\rightarrow \cQ_\lambda$ such that ${p_1}^\gamma_{\alpha,\beta}\tilde{\iota}_\lambda\varphi_\lambda=\iota_\lambda\tilde{p_1}_\lambda\psi_\lambda$ and $\tilde{p_2}_\lambda=\tilde{p_2}\varphi_\lambda$. Note that $\tilde{p_2}:\cQ_\lambda\rightarrow \tilde{F}_\lambda$ is a principal $\rG_{V_\alpha}\times \rG_{V_\beta}$-bundle, $\tilde{p_2}_\lambda:\cP_\lambda\rightarrow \tilde{F}_\lambda$ is a principal $\rG_{V_{\alpha_1}}\times \rG_{V_{\beta_1}}\times \rG_{V_{\alpha_2}}\times \rG_{V_{\beta_2}}$-bundle, and $\rG_{V_{\alpha_1}}\times \rG_{V_{\beta_1}}\times \rG_{V_{\alpha_2}}\times \rG_{V_{\beta_2}}$ can be embedded into $\rG_{V_\alpha}\times \rG_{V_\beta}$.

\begin{lemma}\label{varphi}
There is a morphism $\varphi_\lambda:\cP_\lambda\rightarrow \cQ_\lambda$ such that ${p_1}^\gamma_{\alpha,\beta}\tilde{\iota}_\lambda\varphi_\lambda=\iota_\lambda\tilde{p_1}_\lambda\psi_\lambda$ and $\tilde{p_2}_\lambda=\tilde{p_2}\varphi_\lambda$, hence we have $(\varphi_\lambda)^*(\tilde{\iota}_\lambda)^*({p_1}^\gamma_{\alpha,\beta})^*=(\psi_\lambda)^*(\tilde{p_1}_\lambda)^*(\iota_\lambda)^*$ and $(\tilde{p_2})_\flat\simeq(\tilde{p_2}_\lambda)_\flat(\varphi_\lambda)^*$.
\end{lemma}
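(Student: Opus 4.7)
The plan is to write down $\varphi_\lambda$ by specifying the isomorphisms $\rho_1\colon V_\gamma/W\xrightarrow{\simeq}V_\alpha$ and $\rho_2\colon W\xrightarrow{\simeq}V_\beta$ explicitly in terms of the input data $(x,W,\rho_{11},\rho_{12},\rho_{21},\rho_{22})\in\cP_\lambda$, to verify the two commutativity identities by direct computation, and then to derive the pullback assertions formally using Proposition \ref{Lusztig-principal}.

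For the construction of $\rho_1,\rho_2$ I would use the description of $\tilde{F}_\lambda$ from the proof of Lemma \ref{vector bundle}: under the trivialization of $f_\lambda$ the pair $(x,W)\in\tilde{F}_\lambda$ is encoded by data $(x_{\alpha'},W_1,x_{\beta'},W_2,z,y)$ with $W=\{v'+v''\in V_{\alpha'}\oplus V_{\beta'}\mid v'\in W_1,\ z(v')=v''+W_2\}$. One then has canonical short exact sequences
\[0\to W_2\to W\to W_1\to 0, \qquad 0\to V_{\beta'}/W_2\to V_\gamma/W\to V_{\alpha'}/W_1\to 0,\]
and for each one the ambient decomposition $V_\gamma=V_{\alpha'}\oplus V_{\beta'}$ provides an algebraic splitting (for the first, through any algebraic choice of lift $\tilde{z}\colon W_1\to V_{\beta'}$ of $z$, which turns the identification $(v_1,w_2)\mapsto v_1+\tilde{z}(v_1)+w_2$ into an isomorphism $W\simeq W_1\oplus W_2$; dually for the second). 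I then define $\rho_2$ as $\rho_{12}\oplus\rho_{22}$ composed with $W\simeq W_1\oplus W_2$, and $\rho_1$ analogously out of $\rho_{11}$ and $\rho_{21}$. Since everything is algebraic in $(x,W,\rho_{11},\rho_{12},\rho_{21},\rho_{22})$ the map $\varphi_\lambda\colon\cP_\lambda\to\cQ_\lambda$ is a morphism of varieties, and the identity $\tilde{p_2}\varphi_\lambda=\tilde{p_2}_\lambda$ is clear because $\varphi_\lambda$ does not alter $(x,W)$.

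The nontrivial identity ${p_1}^\gamma_{\alpha,\beta}\tilde{\iota}_\lambda\varphi_\lambda=\iota_\lambda\tilde{p_1}_\lambda\psi_\lambda$ I would verify by computing both sides as points of $\rE_{V_\alpha}\times\rE_{V_\beta}$. By Lemma \ref{psi} the right-hand side is the explicit pair of block matrices $(x_\alpha,x_\beta)$ whose off-diagonal entries are $y_{11},y_{22}$ coming from the block decomposition of $y$, while the left-hand side is $(\rho_{1*}\overline{x}^W,\rho_{2*}x|_W)$. Expanding $x|_W$ and $\overline{x}^W$ via the graph description of $W$ and the block form $x=\bigl(\begin{smallmatrix}x_{\alpha'}&0\\ y&x_{\beta'}\end{smallmatrix}\bigr)$ with respect to $V_\gamma=V_{\alpha'}\oplus V_{\beta'}$, the $x$-stability condition for $W$ translates (as in the proof of Lemma \ref{vector bundle}) to the relation $z x_{\alpha'}-\overline{x_{\beta'}}^{W_2}z=\pi y$ on $W_1$, where $\pi\colon V_{\beta'}\to V_{\beta'}/W_2$. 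The main obstacle is to choose $\tilde{z}$ (equivalently, the splittings) so that after conjugation by the $\rho_{ij}$'s the off-diagonal blocks of $\rho_{1*}\overline{x}^W$ and $\rho_{2*}x|_W$ become exactly $y_{11}$ and $y_{22}$; this amounts to solving an affine-linear equation for $\tilde{z}$ whose solvability is guaranteed by the $x$-stability relation above, and the solution depends algebraically on the data.

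Finally, the pullback assertions are formal consequences. The first, $(\varphi_\lambda)^*(\tilde{\iota}_\lambda)^*({p_1}^\gamma_{\alpha,\beta})^*=(\psi_\lambda)^*(\tilde{p_1}_\lambda)^*(\iota_\lambda)^*$, is immediate by contravariance of $(-)^*$ applied to the identity ${p_1}^\gamma_{\alpha,\beta}\tilde{\iota}_\lambda\varphi_\lambda=\iota_\lambda\tilde{p_1}_\lambda\psi_\lambda$. For the second, applying $(-)^*$ to $\tilde{p_2}\varphi_\lambda=\tilde{p_2}_\lambda$ gives $(\tilde{p_2}_\lambda)^*=(\varphi_\lambda)^*(\tilde{p_2})^*$; composing on the left with the quasi-inverse $(\tilde{p_2}_\lambda)_\flat$ and on the right with $(\tilde{p_2})_\flat$ (both existing by Proposition \ref{Lusztig-principal}) yields $(\tilde{p_2}_\lambda)_\flat(\varphi_\lambda)^*\simeq(\tilde{p_2})_\flat$, as asserted.
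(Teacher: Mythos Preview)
Your overall strategy matches the paper's exactly: build $\rho_1,\rho_2$ from splittings of the two short exact sequences, set $\varphi_\lambda(p)=(x,W,\rho_1,\rho_2)$, verify the two commutativities, and deduce the functor identities formally via Proposition~\ref{Lusztig-principal}. Your last paragraph (deriving the pullback identities from the morphism identities) is precisely what the paper does.

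The point of divergence is your justification of the key identity ${p_1}^\gamma_{\alpha,\beta}\tilde{\iota}_\lambda\varphi_\lambda=\iota_\lambda\tilde{p_1}_\lambda\psi_\lambda$. The paper fixes specific isomorphisms $\zeta_z\colon V_\gamma/W\xrightarrow{\simeq}V_{\alpha'}/W_1\oplus V_{\beta'}/W_2$ and $\xi_z\colon W\xrightarrow{\simeq}W_1\oplus W_2$, writes $x$ as a $4\times 4$ block matrix under the identification $V_\gamma\simeq V_{\alpha'}/W_1\oplus W_1\oplus V_{\beta'}/W_2\oplus W_2$ induced by the $\rho_{ij}$, reorders the blocks, and reads off $\rho_{1*}\overline{x}^W$ and $\rho_{2*}x|_W$ directly. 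Your reformulation---choose the lift $\tilde z$ so as to solve an affine-linear equation---is a genuine gap as stated. Changing $\tilde z$ by some $I$-graded $\delta\colon W_1\to W_2$ changes the off-diagonal block of $\xi\,x|_W\,\xi^{-1}$ by the quantity $(x_{\beta'}|_{W_2})_h\,\delta_{h'}-\delta_{h''}\,(x_{\alpha'}|_{W_1})_h$, and this linear operator from $\bigoplus_i\Hom((W_1)_i,(W_2)_i)$ to $\bigoplus_h\Hom((W_1)_{h'},(W_2)_{h''})$ is not surjective in general (it is identically zero when $x_{\alpha'}=x_{\beta'}=0$). So the $x$-stability relation alone does not guarantee that an arbitrary prescribed off-diagonal block is attainable by adjusting $\tilde z$. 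You should instead carry out the explicit block computation with the specific splittings coming from the $\rho_{ij}$, as the paper does, rather than invoke an abstract solvability argument.
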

\begin{proof}
For any $p=(x,W,\rho_{11},\rho_{12},\rho_{21},\rho_{22})\in \cP_\lambda$, we need to define an element $\varphi(p)=(x,W,\rho_1,\rho_2)\in \cQ_\lambda$. We define $x,W$ are the same as them in $p$.

It remains to define $\rho_1:V_\gamma/W\xrightarrow{\simeq}V_\alpha=V_{\alpha_1}\bigoplus V_{\alpha_2}$ and $\rho_2:W\xrightarrow{\simeq}V_{\beta_1}\oplus V_{\beta_2}$. By the proof of Lemma \ref{vector bundle}, from $p=(x,W,\rho_{11},\rho_{12},\rho_{21},\rho_{22})\in \cP_\lambda$, we have $(x,W)\in \tilde{F}_\lambda$ which can be identified with $(x_{\alpha'},W_1,x_{\beta'},W_2,z,y)$ satisfying certain conditions via
$$x=\begin{pmatrix}
x_{\alpha'} &0\\
y &x_{\beta'}\end{pmatrix},
W=\{v'+v''\in V_{\alpha'}\oplus V_{\beta'}\mid v'\in W_1,z(v')=v''+W_2\}.$$
Under this correspondence, the $I$-graded linear isomorphisms in $p$ are $\rho_{11}:V_{\alpha'}/W_1\xrightarrow{\simeq}V_{\alpha_1},\rho_{12}:W_1\xrightarrow{\simeq}V_{\beta_1},\rho_{21}:V_{\beta'}/W_2\xrightarrow{\simeq}V_{\alpha_2},\rho_{22}:W_2\xrightarrow{\simeq}V_{\beta_2}$.

We fix $I$-graded linear isomorphisms $\zeta_z:V_\gamma/W\xrightarrow{\simeq} V_\alpha'/W_1\bigoplus V_{\beta'}/W_2$ and $\xi_z:W\xrightarrow{\simeq} W_1\bigoplus W_2$, then we define
$$\rho_1=\begin{pmatrix}
\rho_{11} &0\\
0 &\rho_{21}
\end{pmatrix}\zeta_z,\ \rho_2=\begin{pmatrix}
\rho_{12} &0\\
0 &\rho_{22}
\end{pmatrix}\xi_z.$$

It is routine to check ${p_1}^\gamma_{\alpha,\beta}\tilde{\iota}_\lambda\varphi_\lambda=\iota_\lambda\tilde{p_1}_\lambda\psi_\lambda$. We write elements in $\rE_{V_{\gamma}}$ as block matrices, recall that $V_\gamma=V_{\alpha_1}\bigoplus V_{\beta_1}\bigoplus V_{\alpha_2}\bigoplus V_{\beta_2}$.
Suppose  
$$y=\begin{pmatrix}
y_{11}&y_{12}\\
y_{21}&y_{22}
\end{pmatrix}\in \bigoplus_{h\in H}\begin{pmatrix}
\Hom_k((V_{\alpha_1})_{h'},(V_{\alpha_2})_{h''})&\Hom_k((V_{\alpha_1})_{h'},(V_{\beta_2})_{h''})\\
\Hom_k((V_{\beta_1})_{h'},(V_{\alpha_2})_{h''})&\Hom_k((V_{\beta_1})_{h'},(V_{\beta_2})_{h''})\end{pmatrix},$$
and $x_{\alpha'}=\begin{pmatrix}\overline{x_{\alpha'}}^{W_1} &(x_{\alpha'})_{12}\\
(x_{\alpha'})_{21} &x_{\alpha'}|_{W_1}\end{pmatrix},x_{\beta'}=\begin{pmatrix}
\overline{x_{\beta'}}^{W_2} &(x_{\beta'})_{12}\\
(x_{\beta'})_{21} &x_{\beta'}|_{W_2}
\end{pmatrix}$,
then $x\in \rE_{V_{\gamma}}$ corresponds to a block matrix 
$$\begin{pmatrix}
\overline{x_{\alpha'}}^{W_1} &(x_{\alpha'})_{12} &0 &0\\
(x_{\alpha'})_{21} &x_{\alpha'}|_{W_1} &0 &0\\
\rho_{21}^{-1}y_{11}\rho_{11} &\rho_{22}^{-1}y_{12}\rho_{11} &\overline{x_{\beta'}}^{W_2} &(x_{\beta'})_{12}\\
\rho_{21}^{-1}y_{21}\rho_{12} &\rho_{22}^{-1}y_{22}\rho_{21} &(x_{\beta'})_{21} &x_{\beta'}|_{W_2}
\end{pmatrix}$$
under the isomorphism $\textrm{Diag}(\rho_{11}^{-1},\rho_{12}^{-1},\rho_{21}^{-1},\rho_{22}^{-1}):V_\gamma\xrightarrow{\simeq} V_{\alpha'}/W_1\bigoplus W_1\bigoplus V_{\beta'}/W_2\bigoplus W_2$, and it can be rewritten as 
$$\begin{pmatrix}
\overline{x_{\alpha'}}^{W_1} &0 &(x_{\alpha'})_{12} &0\\
\rho_{21}^{-1}y_{11}\rho_{11} &\overline{x_{\beta'}}^{W_2} &\rho_{22}^{-1}y_{12}\rho_{11} &(x_{\beta'})_{12}\\
(x_{\alpha'})_{21} &0 &x_{\alpha'}|_{W_1} &0\\
\rho_{21}^{-1}y_{21}\rho_{12} &(x_{\beta'})_{21} &\rho_{22}^{-1}y_{22}\rho_{21} &x_{\beta'}|_{W_2}
\end{pmatrix}$$
under the isomorphism $\textrm{Diag}(\rho_{11}^{-1},\rho_{21}^{-1},\rho_{12}^{-1},\rho_{22}^{-1}): V_\gamma\xrightarrow{\simeq} V_{\alpha'}/W_1\bigoplus V_{\beta'}/W_2\bigoplus W_1\bigoplus W_2$, and so $\overline{x}^W,x|_W$ can be written as 
$$\zeta_z^{-1}\begin{pmatrix}
\overline{x_{\alpha'}}^{W_1} &0\\
\rho_{21}^{-1}y_{11}\rho_{11} &\overline{x_{\beta'}}^{W_2}
\end{pmatrix}\zeta_z,\ \xi_z^{-1}\begin{pmatrix}
x_{\alpha'}|_{W_1} &0\\
\rho_{22}^{-1}y_{22}\rho_{21} &x_{\beta'}|_{W_2}
\end{pmatrix}\xi_z,
$$
thus 
\begin{align*}
&{p_1}^\gamma_{\alpha,\beta}\tilde{\iota}_\lambda\varphi_\lambda(p)=(\rho_1\overline{x}^W\rho_1^{-1},\rho_2x|_W\rho_2^{-1})\\
&=(\begin{pmatrix}\rho_{11}\overline{x_{\alpha'}}^{W_1}\rho_{11}^{-1} &0\\
y_{11} &\rho_{21}\overline{x_{\beta'}}^{W_2}\rho_{21}^{-1}
\end{pmatrix},\begin{pmatrix}
\rho_{12}x_{\alpha'}|_{W_1}\rho_{12}^{-1} &0\\
y_{22} &\rho_{22}x_{\beta'}|_{W_2}\rho_{22}^{-1}
\end{pmatrix})=\iota_\lambda\tilde{p_1}_\lambda\psi_\lambda(p)
\end{align*}

It is clear that $\tilde{p_2}_\lambda=\tilde{p_2}\varphi_\lambda$, thus we have 
$(\varphi_\lambda)^*(\tilde{\iota}_\lambda)^*({p_1}^\gamma_{\alpha,\beta})^*=(\psi_\lambda)^*(\tilde{p_1}_\lambda)^*(\iota_\lambda)^*$ and $(\tilde{p_2}_\lambda)^*=(\varphi_\lambda)^*(\tilde{p_2})^*$. By Proposition \ref{Lusztig-principal}, $(\tilde{p_2}_\lambda)^*,(\tilde{p_2})^*$ have quasi-inverses $(\tilde{p_2}_\lambda)_\flat,(\tilde{p_2})_\flat$ respectively, and so $(\tilde{p_2})_\flat\simeq(\tilde{p_2}_\lambda)_\flat(\varphi_\lambda)^*$.
\end{proof}

\subsection{Proof of the main theorem}\

Now, we can prove Theorem \ref{main}.

\begin{proof}
By Proposition \ref{left} and Proposition \ref{right}, we only need to compare 
\begin{align*}
&({p_3}_\lambda)_!(f_\lambda)_!(\tilde{p_2})_\flat(\tilde{\iota}_\lambda)^*({p_1}^{\gamma}_{\alpha,\beta})^*(A\boxtimes B)[M](\frac{M}{2}),\\
&({p_3}_\lambda)_!({p_2}_\lambda)_\flat(\tilde{\kappa}_\lambda)_!(\tilde{p_1}_\lambda)^*(\iota_\lambda)^*(A\boxtimes B)[N_\lambda-(\alpha_2,\beta_1)](\frac{N_\lambda-(\alpha_2,\beta_1)}{2})
\end{align*}
for any $\lambda=(\alpha,\alpha_2,\beta_1,\beta_2)\in \cN$. By Lemma \ref{f-p2}, \ref{psi} and \ref{varphi}, we have
\begin{align*}
&({p_3}_\lambda)_!(f_\lambda)_!(\tilde{p_2})_\flat(\tilde{\iota}_\lambda)^*({p_1}^{\gamma}_{\alpha,\beta})^*(A\boxtimes B)[M](\frac{M}{2})\\
\simeq&({p_3}_\lambda)_!(f_\lambda)_!(\tilde{p_2}_\lambda)_\flat(\varphi_\lambda)^*(\tilde{\iota}_\lambda)^*({p_1}^{\gamma}_{\alpha,\beta})^*(A\boxtimes B)[M](\frac{M}{2})\\
\simeq&({p_3}_\lambda)_!({p_2}_\lambda)_\flat(\tilde{f}_\lambda)_!(\varphi_\lambda)^*(\tilde{\iota}_\lambda)^*({p_1}^{\gamma}_{\alpha,\beta})^*(A\boxtimes B)[M](\frac{M}{2})\\
\simeq&({p_3}_\lambda)_!({p_2}_\lambda)_\flat(\tilde{\kappa}_\lambda)_!(\psi_\lambda)_!(\varphi_\lambda)^*(\tilde{\iota}_\lambda)^*({p_1}^{\gamma}_{\alpha,\beta})^*(A\boxtimes B)[M](\frac{M}{2})\\
=&({p_3}_\lambda)_!({p_2}_\lambda)_\flat(\tilde{\kappa}_\lambda)_!(\psi_\lambda)_!(\psi_\lambda)^*(\tilde{p_1}_\lambda)^*(\iota_\lambda)^*(A\boxtimes B)[M](\frac{M}{2}).
\end{align*}
Since $\psi_\lambda$ is smooth with connected fibres of dimension $K_\lambda$, by Proposition \ref{pure-*!}, we have $(\psi_\lambda)^*=(\psi_\lambda)^![-2K_\lambda](-K_\lambda)$. Hence $$(\psi_\lambda)_!(\psi_\lambda)^*=(\psi_\lambda)_!(\psi_\lambda)^![-2K_\lambda](-K_\lambda)\simeq [-2K_\lambda](-K_\lambda),$$ since $((\psi_\lambda)_!,(\psi_\lambda)^!)$ is an adjoint pair and $(\psi_\lambda)^!=(\psi_\lambda)^*[2K_\lambda](K_\lambda)$ is fully faithful, by Theorem 3.6.6 in \cite{Pramod-2021}. Thus 
\begin{align*}
&({p_3}_\lambda)_!({p_2}_\lambda)_\flat(\tilde{\kappa}_\lambda)_!(\psi_\lambda)_!(\psi_\lambda)^*(\tilde{p_1}_\lambda)^*(\iota_\lambda)^*(A\boxtimes B)[M](\frac{M}{2})\\
\simeq&({p_3}_\lambda)_!({p_2}_\lambda)_\flat(\tilde{\kappa}_\lambda)_!(\tilde{p_1}_\lambda)^*(\iota_\lambda)^*(A\boxtimes B)[M-2K_\lambda](\frac{M-2K_\lambda}{2}).
\end{align*}
It remains to show that $M-2K_\lambda=N_\lambda-(\alpha_2,\beta_1)$. Indeed, 
\begin{align*}
&M-N_\lambda-2K_\lambda\\
=&\sum_{h\in H}\alpha_{h'}\beta_{h''}+\sum_{i\in I}\alpha_i\beta_i-\langle\alpha',\beta'\rangle+\langle\alpha_1,\alpha_2\rangle+\langle\beta_1,\beta_2\rangle-\sum_{h\in H}(\alpha_{1h'}\beta_{1h''}+\alpha_{2h'}\beta_{2h''})\\
&-\sum_{i\in I}(\alpha_{1i}\beta_{1i}+\alpha_{2i}\alpha_{2i})-2\sum_{h\in H}(\alpha_{1h'}\alpha_{2h''}+\alpha_{1h'}\beta_{2h''}+\beta_{1h'}\beta_{2h''})-2\sum_{i\in I}\alpha_{2i}\beta_{1i}\\
&+2\sum_{h\in H}(\alpha_{1h'}\alpha_{2h''}+\beta_{1h'}\beta_{2h''})\\
=&\sum_{h\in H}(\alpha_{1h'}+\alpha_{2h'})(\beta_{1h''}+\beta_{2h''})+\sum_{i\in I}(\alpha_{1i}+\alpha_{2i})(\beta_{1i}+\beta_{2i})-\langle\alpha_1+\beta_1,\alpha_2+\beta_2\rangle\\
&+\langle\alpha_1,\alpha_2\rangle+\langle\beta_1,\beta_2\rangle-\sum_{h\in H}(\alpha_{1h'}\beta_{1h''}+\alpha_{2h'}\beta_{2h''})-\sum_{i\in I}(\alpha_{1i}\beta_{1i}+\alpha_{2i}\alpha_{2i})\\
&-2\sum_{h\in H}(\alpha_{1h'}\alpha_{2h''}+\alpha_{1h'}\beta_{2h''}+\beta_{1h'}\beta_{2h''})-2\sum_{i\in I}\alpha_{2i}\beta_{1i}+2\sum_{h\in H}(\alpha_{1h'}\alpha_{2h''}+\beta_{1h'}\beta_{2h''})\\
=&\sum_{h\in H}(\alpha_{2h'}\beta_{1h''}-\alpha_{1h'}\beta_{2h''})+\sum_{i\in I}(\alpha_{1i}\beta_{2i}-\alpha_{2i}\beta_{1i})-\langle\alpha_1,\beta_2\rangle-\langle\beta_1,\alpha_2\rangle\\
=&\sum_{h\in H}(\alpha_{2h'}\beta_{1h''}+\beta_{1h'}\alpha_{2h''})-\sum_{i\in I}(\alpha_{2i}\beta_{1i}+\beta_{1i}\alpha_{2i})\\
=&-(\alpha_2,\beta_1),
\end{align*}
as desired.
\end{proof}

\textbf{Acknowledgements}

We are grateful to the referee for many helpful suggestions and corrections.

\nocite{Lusztig-1990}
\nocite{Schiffmann-2012}

\bibliography{mybibfile}

\end{document}